\documentclass[11 pt]{amsart}
\usepackage{amsmath, amsthm, amssymb, amsfonts}
\usepackage{hyperref}
\usepackage{graphicx}
\usepackage{eucal}
\graphicspath{{Images/}}
\usepackage{float}
\usepackage{tikz}
\usepackage{caption}
\usepackage{subcaption}
\usepackage{tikz-cd}
\usepackage{mathtools}
\usepackage{xcolor}
\usepackage{stmaryrd,wasysym,bm}
\usepackage{accents}
\usepackage{amsmath}
\usepackage[all]{xy}
\usetikzlibrary{calc,angles,quotes}
\usepackage{tikz}
\usetikzlibrary{calc}
\usepackage{tikz}
\usetikzlibrary{arrows,chains,matrix,positioning,scopes}

\makeatletter
\tikzset{join/.code=\tikzset{after node path={%
			\ifx\tikzchainprevious\pgfutil@empty\else(\tikzchainprevious)%
			edge[every join]#1(\tikzchaincurrent)\fi}}}
\makeatother

\tikzset{>=stealth',every on chain/.append style={join},
	every join/.style={->}}
\title{Convergence-Symmetric Metric Spaces}
\author[Sa\u{g}lam]{\.{I}smail Sa\u{g}lam}
\address{\.{I}smail Sa\u{g}lam, Department of Aerospace Engineering, Adana Alparslan T\"urke\c{s} Science and Technology University, Adana, T\"urkiye and  \\
Max-Planck-Institut für Mathematik,
Vivatsgasse 7, 53111 Bonn, Germany
\\
}
\email{isaglamtrfr@gmail.com}

\author[Ohshika]{Ken'ichi Ohshika}
\address{Ken'ichi Ohshika, Department of Mathematics, Gakushuin University, Mejiro, Toshima-ku, Tokyo 171-8588, Japan and
Max-Planck-Institut für Mathematik,
Vivatsgasse 7, 53111 Bonn, Germany
}
\email{ohshika@math.gakushuin.ac.jp}

\author[Papadopoulos]{Athanase Papadopoulos}
\address{Athanase Papadopoulos, IRMA, CNRS and University of Strasbourg, Strasbourg, France and
 Max-Planck-Institut für Mathematik,
Vivatsgasse 7, 53111 Bonn, Germany 
}
\email{papadop@math.unistra.fr}

\author[Eyido\u{g}an]{Sad{\i}k Eyido\u{g}an}
\address{Sad{\i}k Eyido\u{g}an, Department of Mathematics, \c{C}ukurova University, Adana, T\"urkiye}
\email{seyidogan@cu.edu.tr}

\newtheorem{theorem}{Theorem}[section]
\newtheorem{corollary}[theorem]{Corollary}
\newtheorem{lemma}[theorem]{Lemma}
\newtheorem{proposition}[theorem]{Proposition}

\theoremstyle{definition}
\newtheorem{definition}[theorem]{Definition}
\theoremstyle{definition}
\newtheorem{example}[theorem]{Example}
\theoremstyle{remark}
\newtheorem{remark}[theorem]{Remark}
\theoremstyle{definition}
\newtheorem{question}[theorem]{Question}

\newcommand{\R}{{\mathbb R}}

\newcommand{\F}{{\mathcal F}}
\newcommand{\N}{{\mathbb N}}

\numberwithin{equation}{section}
\numberwithin{table}{section}

\begin{document}

\begin{abstract}
We study some basic properties of spaces which satisfy the usual axioms of a metric space but without the symmetry axiom. 
We realised that such a study is needed in view of the relatively recent appearance
of several papers on natural asymmetric metrics to which the available theories
do not apply. One prominent example is Thurston's metric  on the Teichmüller spaces of hyperbolic surfaces of finite type, introduced by Thurston in 1985, with several variants and generalisations.
 Another asymmetric metric is the earthquake metric, also introduced by Thurston and about which several basic questions remain open. Other asymmetric metrics we consider here include the Funk metric, the Apollonian metric and the left Hausdorff metric. We are particularly interested in questions of completeness and completion and in associated notions of boundary at infinity for these asymmetric metrics. In this paper, after discussing several examples, we prove results such as a Banach fixed point theorem, an Arzelà--Ascoli theorem and a Hopf--Rinow  theorem adapted to this asymmetric setting.  We  introduce a property we call ``convergence-symmetry" which turns out to be crucial in the study of metric completeness of some asymmetric metrics. This property is stronger than a property which was formulated by Herbert Busemann around 1970, and which we call the \emph{Busemann condition}.  Every convergence-symmetric metric space has a unique minimal completion
which is  convergence-symmetric. This does not hold for spaces satisfying Busemann's condition. Several examples we consider satisfy Busemann's condition but are not convergence-symmetric.  We have included throughout the paper a certain number of open questions.

\end{abstract}
\subjclass[2020]{ Primary 53C70; Secondary 51K05; 51K10; 53B40; 53C60 ; 57K20
}
\keywords{Metric, asymmetric metric, convergence-symmetric metric space, Busemann axiom, surfaces, Thurston metric,   Teichmüller space, spaces of Euclidean triangles, weak metric,  earthquake metric, Funk metric, Hilbert metric, weighted Funk metric, Arzelà--Ascoli theorem, Baire category theorem, Banach fixed point theorem, Hopf--Rinow theorem}
\maketitle
\tableofcontents

%Apollonian semi-metric
%
%Hausdorff 

\textcolor{red}{}

\section{Introduction}\label{section:Intro}

In this paper, we shall deal with some variants of the notion of metric space. We first give some precise definitions. We start with the usual notion of a metric space as defined by Fréchet \cite{Fre} and Hausdorff \cite{Hau}, which we call, for our purpose here, ``symmetric metric space".

\begin{definition}[Symmetric metric space]
\label{symmetric-metric}
A symmetric metric space is a non-empty set $X$ together with a function (called symmetric distance function)
 $d: X\times X\to [0,\infty)$ 
satisfying the following three properties:

\begin{enumerate}
    \item 
$d(x,y)=0$ if and only if $x=y$,
\item 
$d(x,y)+d(y,z)\geq d(x,z)$ for all $x,y,z$ in $X$.
\item \label{S3}
$d(x,y)=d(y,x)$ for all $x,y \in X$.
\end{enumerate}
\end{definition}
We call such a function $d$ a \emph{symmetric distance function}.
Property \ref{S3} in this definition is the symmetry axiom. Examples of metrics which were studied extensively in the last few decades in geometry and low-dimensional topology  (in particular the Thurston metric), show that this axiom is too strong, and we think that it impedes many applications. In what follows, we shall review 
several important classes of distance functions which do not satisfy this axiom.
Nevertheless, as is well known, this axiom is useful for studying a large class of examples. Also, the theory of symmetric metric spaces itself is rich. For example, the symmetry axiom is used in the fact that every metric space can be embedded in a complete metric space. %and acquires there a natural boundary. 
One of our goals in this paper is to prove an analogous property for certain asymmetric metric spaces. For this, we study spaces satisfying a property   introduced in the paper \cite{SOP1}, which we call \emph{convergence-symmetry} and which we shall recall now. We start with the following notion  of metric space which is weaker than the one given in Definition \ref{symmetric-metric}:

\begin{definition}[Metric space]
\label{metric}
A  metric space is a non-empty set $X$ together with a function 
$$d: X\times X\to [0,\infty),$$ called a metric,
satisfying the following two properties:

\begin{enumerate}
    \item 
$d(x,y)=0$ if and only if $x=y$,
\item 
$d(x,y)+d(y,z)\geq d(x,z)$ for all $x,y,z$ in $X$.
\end{enumerate}
\end{definition}

We shall use the terminology ``asymmetric metric space", or ``non-symmetric metric space", for a set $X$ equipped with a function $d:X\times X\to [0,\infty)$ 
satisfying the axioms of Definition \ref{metric} and where the symmetry axiom fails, that is,  where there exist two points $x$
and $y$ in $X$ satisfying $d(x,y)\neq d(y,x)$. 

We shall study convergence of sequences and completions in metric spaces, and 
for this, the symmetry axiom will be replaced by the following much weaker one:
\begin{definition}[Convergence-symmetric metric space]\label{def:cs}
Let $X$ be a non-empty set and $d:X\times X \to [0,\infty)$ a metric.
 We say that $(X,d)$ is a convergence-symmetric metric space
if $d$ satisfies the following additional property, called the convergence-symmetry property:
for any two sequences $(p_n)$ and $(q_n)$ in $X$, we have
\begin{equation}\label{eq:distance}
d(p_n,q_n)\to 0 \ \text{if and only if}\ d(q_n,p_n)\to 0.
 \end{equation}   
\end{definition}

There are good examples of asymmetric metrics satisfying the convergence-symmetry property. We shall give several such examples which justify the general study of these spaces. Before that, let us first recall a property introduced by Herbert Busemann in his study of asymmetric metric spaces, which is an alternative axiom  different from  (\ref{eq:distance}).

Let $(X,d)$ be a metric space. It may happen that for a sequence $(x_n)$ in $X$ satisfying $d(x_n,x)\to 0$ , the sequence $d(x,x_n)$ does not converge to 0 as $n\to \infty$. This is a serious problem, for in this case one cannot define  from the metric $d$ a topology  on $X$ in the usual way;. To deal with this problem, Busemann introduced in \cite{Busemann1970} the following axiom, which we call \emph{Busemann's axiom}, or \emph{Busemann's condition}.
\begin{definition}[Metric space satisfying Busemann's axiom]
A metric space $(X,d)$ is said to satisfy Busemann's axiom if 
for any $x$ in $X$ and for any sequence $(x_n)$ in $X$, we have 
$$d(x_n,x)\to 0 \ \text{if and only if}\  d(x,x_n)\to 0.$$
\end{definition}
For a (not necessarily symmetric) metric, we can define two topologies compatible with the metric, the forward topology and the backward topology.
If $(X,d)$ is a metric space satisfying Busemann's axiom, then the two topologies  coincide, as we shall see in \S3.
They agree with the topology on $X$  induced by the symmetric metric
$$d_{\mathrm{arith}}(x,y)=\frac{1}{2}(d(x,y)+d(y,x)),$$
or, equivalently, by the symmetric metric
$$d_{\mathrm{max}}(x,y)=\mathrm{max}(d(x,y),d(y,x)).$$

 Busemann's axiom is weaker than the convergence-symmetry axiom introduced in Definition \ref{def:cs}.

We can talk about continuous functions between spaces  satisfying Busemann's axiom.
Furthermore, for such a space $X$, one can define in a natural way forward Cauchy sequences, forward completeness, backward Cauchy sequences and backward completeness, see \cite[Definition 6.3]{2012-Hilbert}. The space $X$ may be backward complete but not forward  complete, see \cite[Proposition 6.4]{2012-Hilbert} and Example  \ref{ex:Funk}  below. 
This last example also shows that there may be no forward/backward completion of a space which is forward/backward incomplete unless we allow the distance function to take the value $\infty$.
 Things become more natural from the point of view of completeness if $(X,d)$ satisfies the convergence-symmetry property: forward Cauchy sequences are backward Cauchy and vice versa; hence forward complete spaces are backward complete and vice versa. Thus, we have a good notion of completeness for convergence-symmetric spaces. Furthermore, there is a smallest complete metric space in which a convergence-symmetric space may be embedded isometrically and densely in a canonical way. These completeness properties were the initial motivation for the study of metric spaces satisfying the convergence-symmetry property.

The rest of this paper is organised as follows. 

In Section \ref{section:examples}, we provide a list of examples of asymmetric metric spaces which show our motivation for the study of such spaces. This includes spaces satisfying Busemann's axiom and others which do not satisfy it. Examples satisfying the convergence-symmetry property and others not satisfying it are included as well.

 In Section \ref{section:basic}, we introduce two topologies induced from an asymmetric metric, the forward topology and the backward topology. We show that the two topologies coincide if the metric satisfies Busemann's axiom.
 
In Section \ref{section:completion}, we study Cauchy sequences and we construct the 
completion of a convergence-symmetric metric space.
  We characterise compact metric spaces, and we show that sequential compactness is equivalent to topological compactness. 

Section \ref{section:weak} is concerned with convergence-symmetric Minkowski normed spaces. These are the linear versions of convergence-symmetric Finsler spaces, which we study in Section \ref{s:Finsler}.

In Section \ref{section:Ascoli}, we prove analogues of Arzel\`a--Ascoli's,  Baire category and Hopf--Rinow theorems for convergence-symmetric metric spaces.

In Section \ref{s:Thurston}, we study examples of Thurston-type weak metrics, that is, analogues in several variants of the metric he defined on the Teichm\"uller spaces of complete hyperbolic metrics on surfaces of finite type and we formulate several related open questions.

%
% ??? saying that the Banach Fixed Point Theorem can be generalized to complete convergence-symmetric metric spaces.
%
% In Section \ref{section:triangles}, we prove that the space of labeled Euclidean triangles introduced in Section \ref{section:examples} is complete. 
% 
% In Section \ref{Finsler-convergence-symmetric}, ???
%
% 
%In Section \ref{section:Baire} we show that a complete convergence-symmetric metric space is a Baire space. 
%

\section{Examples of asymmetric metric spaces}\label{section:examples}

\begin{example}
    The function $d:\mathbb{N}\times \mathbb{N} \longrightarrow \R $ defined by 
    $$ d(x,y) =
\begin{cases}
      y-x, & y \geq x\\
   x-y+1, & y<x
\end{cases}$$
 is an asymmetric metric. In addition, this metric  is convergence-symmetric. Indeed, the convergence \(d(p_n, q_n) \to 0\) implies that for sufficiently large \(n\), we have \(p_n = q_n\). Therefore, \(d(p_n, q_n) \to 0\) if and only if \(d(q_n, p_n) \to 0\). 

\end{example}

\begin{example}
   For a positive real number $\alpha$,  the function $d:\R\times \R\longrightarrow \R $ defined by 
$$ d(x,y) =
\begin{cases}
      x-y, & x \geq y\\
      \alpha(y-x), & y>x
\end{cases}$$
is an asymmetric metric space which is convergence-symmetric.

\end{example}

\begin{example}
    The function $d:\R\times \R\longrightarrow \R $ defined by 

    $$ d(x,y) =
\begin{cases}
      x-y, & x \geq y\\
   1, & y>x
\end{cases}$$
 is an asymmetric metric space. This metric does not satisfy Busemann's axiom.
\end{example}

\begin{example}\label{busemanamaconvergesdegil}
    The function $d:\R\times \R\longrightarrow \R $ defined by 
$$ d(x,y) =
\begin{cases}
      e^y - e^x, & y \geq x \\
   e^{-y} - e^{-x}, & y<x
\end{cases}$$
 is an asymmetric metric. We can easily see that this  metric  is not convergence-symmetric: Consider the sequences \( (p_n) := (n + \frac{1}{n}) \) and \( (q_n ) := (n) \). Then, while \(d(p_n, q_n) \to 0\), we have \(d(q_n, p_n) \to \infty\). 
 On the other hand, this metric satisfies Busemann's axiom.
\end{example}

\begin{example}\label{busemannbölümuzayı}
    The function $d:[0,\infty)\times [0,\infty )\longrightarrow \R $ defined by 
$$ d(x,y) =
\begin{cases}
      e^y - e^x, & y \geq x \\
       x - y, & x > y
\end{cases}$$

\noindent is an asymmetric metric.  
It is not difficult to see that it satisfies Busemann's axiom. Furthermore, the metric $d$ is not convergence-symmetric. To see this, consider the sequences $( p_n ) := ( n + \frac{1}{n} )$ and $(q_n ) := (n)$. Then, while $d(p_n, q_n) \to 0$ as $n \to \infty$, we have  $d(q_n, p_n) \to \infty$.

%1. \textit{Non-negativity and identity of indiscernibles:} It is straightforward to see that $d(x,y) \geq 0$ for all $x, y \in \R$, and $d(x,y) = 0$ if and only if $x = y$.

%2. \textit{Triangle inequality:} We need to show that for all $x, y, z \in \R$, the inequality $$d(x,z) \leq d(x,y) + d(y,z)$$ holds.
%\begin{itemize}
 %   \item  \(x < y < z\) :
  %   Here, \(d(x,z) = e^z - e^x\), \(d(x,y) = e^y - e^x\), and \(d(y,z) = e^z - e^y\).  
   %  Clearly, \(d(x,z) = (e^y - e^x) + (e^z - e^y) = d(x,y) + d(y,z)\).

   %\item   \(x < z < y\) :
    % Here, \(d(x,z) = e^z - e^x\), \(d(x,y) = e^y - e^x\), and \(d(y,z) = y - z\).  
     %Since \(e^z - e^x \leq e^y - e^x\) (as \(z < y\)) and \(y - z \geq 0\), we have \(d(x,z) \leq d(x,y) + d(y,z)\).

   %\item \(y < x < z\)  :
    % Here, \(d(x,z) = e^z - e^x\), \(d(x,y) = x - y\), and \(d(y,z) = e^z - e^y\).  
    % Since \(e^z - e^x \leq (x - y) + (e^z - e^y)\), the inequality holds.

   %\item  \(y < z < x\)  :
    % Here, \(d(x,z) = x - z\), \(d(x,y) = x - y\), and \(d(y,z) = e^z - e^y\).  
     %Clearly, \(x - z \leq (x - y) + (e^z - e^y)\), so the inequality holds.

  % \item \(z < x < y\)  :
   %  Here, \(d(x,z) = x - z\), \(d(x,y) = e^y - e^x\), and \(d(y,z) = y - z\).  
    % Since \(x - z \leq (e^y - e^x) + (y - z)\), the inequality is satisfied.

  %\item  \(z < y < x\)  :
   %  Here, \(d(x,z) = x - z\), \(d(x,y) = x - y\), and \(d(y,z) = y - z\).  
    % Clearly, \(x - z = (x - y) + (y - z) = d(x,y) + d(y,z)\).
%\end{itemize}
   
%In all cases, the triangle inequality is satisfied. Therefore, the function $d(x,y)$ satisfies all the conditions of a metric. \\

\end{example}

\begin{example}[The Funk metric]
\label{ex:Funk}
Let $\Omega$ be a closed bounded convex subset of $\R^n$ with non-empty interior $\mathring{\Omega}$. Given a pair of distinct points $x,y\in\mathring{\Omega}$, we denote the ray with origin $x$ and passing through  $y$ by $R(x,y)$. Let $a^+$ be the intersection point of $R(x,y)$ with the boundary $\partial \Omega$ of $\Omega$.  
Similarly, let $a^-$ be the intersection point of $R(y,x)$ with $\partial \Omega$. 
See Figure \ref{convexset}.  We recall that the \emph{Funk metric} on $\mathring{\Omega}$ is defined by the formula
$$\mathcal{F}(x,y)=
 \log\frac{\lvert x - a^+\rvert }{\lvert y-a^+\rvert},$$
 where $\vert \cdot \vert$ denotes the ordinary Euclidean norm of $\R^n$.
 \end{example}

   %   \begin{figure}
  %  \includegraphics{a-geodesic.pdf}
 %   \end{figure}

\begin{figure}[h]
\centering
 \begin{tikzpicture}
  % Çember
  \draw[thick] (0,0) ellipse (4cm and 2cm);

  % Yatay çizgi
  \draw[thick] (-4,0) -- (4,0);

  % Oklar ve etiketler
  \node[left] at (-4,0) {\(a^-\)};
  \node[right] at (4,0) {\(a^+\)};

  \draw[-] (-1,0.1) -- (-1,-0.2) node[below] {\(x\)};
  \draw[-] (0,0.1) -- (0,-0.2) node[below] {\(y\)};

\end{tikzpicture}
\caption{A closed bounded convex set in $\R^n$ with non-empty interior, and the definition of the Hilbert metric.}
\label{convexset}
\end{figure}
The Funk metric is an asymmetric metric. It is known that this metric is forward complete, but not backward complete (see Definition \ref{d:symmetry} for the definition, and 
see \cite[Propostion 6.4]{2012-Hilbert} for the proof). 
It follows from Lemma \ref{Cauchy} that the Funk metric is not convergence-symmetric. 

\begin{proposition}
The Funk metric satisfies Busemann’s axiom but not the convergence-symmetry property.
\end{proposition}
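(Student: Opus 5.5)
We prove the two assertions separately. Busemann's axiom will follow from a two-sided comparison of $\F$ with the Euclidean distance near a fixed interior point. The failure of convergence-symmetry will come from an explicit pair of sequences escaping to the boundary.

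\emph{Busemann's axiom.} Fix $x\in\mathring{\Omega}$ and let $r>0$ be such that the closed Euclidean ball $B(x,2r)$ lies in $\mathring{\Omega}$. Let $D$ be the Euclidean diameter of $\Omega$. For $y\in B(x,r)$, $y\neq x$, write $t=|x-y|$.

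First compute $\F(x,y)$. Since $|x-a^+|=|y-a^+|+t$, we get
\[\F(x,y)=\log\Bigl(1+\frac{t}{|y-a^+|}\Bigr).\]
Here $a^+\notin B(x,2r)$, so $|y-a^+|\ge r$. Also $|y-a^+|\le D$. Hence
\[\log(1+t/D)\le \F(x,y)\le \log(1+t/r).\]

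Next compute $\F(y,x)$, which uses the boundary point $a^-$ beyond $x$ on the ray from $y$. We have $\F(y,x)=\log(1+t/|x-a^-|)$ with $r\le |x-a^-|\le D$. This gives the same two-sided bounds.

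Both $\F(x,y_n)$ and $\F(y_n,x)$ are therefore squeezed between increasing functions of $|x-y_n|$ that vanish only at $0$. It remains to check that $\F(x,y_n)\to0$ or $\F(y_n,x)\to0$ forces $y_n$ eventually into $B(x,r)$. Suppose $|x-y_n|\ge r$. Then by the same formulas $\F(x,y_n)\ge \log(1+r/D)$ and $\F(y_n,x)\ge\log(1+r/D)$. So either convergence to $0$ forces $|x-y_n|\to0$, and this in turn forces the other convergence. This proves Busemann's axiom.

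\emph{Failure of convergence-symmetry.} One route is to invoke Lemma~\ref{Cauchy} together with the known fact that $\F$ is forward but not backward complete, as remarked before the statement. To keep the proof self-contained, I would instead give explicit sequences.

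Pick $b\in\partial\Omega$ and $c\in\mathring{\Omega}$. Let $q_n$ lie on the segment $[c,b]$ at Euclidean distance $\varepsilon_n=1/n$ from $b$, and let $p_n$ lie on the same segment at distance $2\varepsilon_n$ from $b$.

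For $\F(q_n,p_n)$, the ray from $q_n$ through $p_n$ points toward $c$ and exits at a boundary point at distance at least some fixed $\delta>0$ from $q_n$. So
\[\F(q_n,p_n)=\log\Bigl(1+\frac{\varepsilon_n}{|p_n-a^+|}\Bigr)\le \log(1+\varepsilon_n/\delta')\to0\]
for a fixed $\delta'>0$.

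For $\F(p_n,q_n)$, the ray from $p_n$ through $q_n$ exits at $b$ itself, since the segment $[c,b]$ meets $\partial\Omega$ only at $b$ by convexity with $c$ interior. Hence
\[\F(p_n,q_n)=\log\frac{2\varepsilon_n}{\varepsilon_n}=\log 2\not\to 0.\]
This contradicts \eqref{eq:distance}.

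The main point to check carefully is the exit-point claims. The first is that the ray beyond $x$, or from $q_n$ toward $c$, leaves $\Omega$ at a uniformly positive distance. This holds because $B(x,2r)\subset\mathring{\Omega}$, and for the second argument because the exit point lies beyond $c$ on the line, at distance at least $|c-b|-2\varepsilon_n$. The second is that the ray from $p_n$ through $q_n$ hits $\partial\Omega$ exactly at $b$. Both follow from the convexity of $\Omega$ and the fact that open segments joining an interior point to a point of $\overline{\Omega}$ lie in $\mathring{\Omega}$.
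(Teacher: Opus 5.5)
Your proof is correct. For the failure of convergence-symmetry you follow the paper's idea. The paper works in the unit disk with $x_n=(1-\frac1n,0)$ and $y_n=(1-\frac1{2n},0)$, gets $\log 2$ in one direction and $0$ in the other, and only asserts that the example generalises. Your segment $[c,b]$ from an interior point to a boundary point is exactly that generalisation. You also identify the convexity fact it needs: the half-open segment from an interior point to a point of $\Omega$ lies in $\mathring{\Omega}$, so the ray from $p_n$ through $q_n$ exits at $b$, and the opposite ray exits beyond $c$.

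The genuine difference is in the proof of Busemann's axiom. The paper argues by contradiction. It assumes $\mathcal{F}(x_n,x)\to 0$ and extracts a subsequence with $|x-a_n^+|/|x_n-a_n^+|\to\delta\neq 1$, allowing $\delta=\infty$. It then uses the collinear ordering $a_n^-,x,x_n,a_n^+$ to express $|x_n-a_n^-|$ through $|a_n^+-a_n^-|$ and $|x_n-a_n^+|$. This gives $1=1+\frac{\delta-1}{\delta}\lim|x-a_n^+|/|x-a_n^-|$, which forces $\delta=1$ because the last ratio stays away from $0$ and $\infty$. The converse implication is declared similar.

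You instead write $\mathcal{F}(x,y)=\log(1+|x-y|/|y-a^+|)$ and $\mathcal{F}(y,x)=\log(1+|x-y|/|x-a^-|)$. You bound both denominators above by $D$ (globally) and below by $r$ (when $|x-y|\le r$). Both quantities are then squeezed between $\log(1+|x-y|/D)$ and $\log(1+|x-y|/r)$.

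Your route is shorter and treats both implications at once, with no subsequences and no convention for $\delta=\infty$. It also proves more: forward convergence, backward convergence and Euclidean convergence to a point of $\mathring{\Omega}$ all coincide. So the topology of $\mathcal{F}$ is the Euclidean topology on $\mathring{\Omega}$, with an explicit local modulus. The paper's argument yields only the qualitative equivalence.
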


\begin{proof}
  Let $\Omega$ be a closed bounded convex set in $\mathbb{R}^n$ with a non-empty interior. Denote the Funk metric on $\mathring{\Omega}$ by $\mathcal{F}_\Omega$. 
Suppose that $(x_n)$ is a sequence in $\mathring{\Omega}$ and that $x$ a point in $\mathring{\Omega}$ such that $\mathcal{F}_\Omega(x_n,x)$ converges to $0$. From the definition of the Funk metric, we have (using the symbols $a_n^-, a_n^+$ to denote $a^-, a^+$ in Example \ref{ex:Funk}  replacing $y$  with $x_n$)
\[
\frac{|x_n - a_n^-|}{|x - a_n^-|} \to 1 \quad \text{as } n \to \infty.
\]
We aim to show that $\mathcal{F}_\Omega(x, x_n) \to 0$. By taking a subsequence, we can assume that 
  $$\frac{| x - a_{n_k}^+ | }{| x_{n_k} - a_{n_k}^+ |} \to \delta \neq 1 \quad \text{as } k \to \infty,$$  allowing $\delta$ to be $\infty$.
  Using the properties of limits and performing some arithmetic manipulations on the Funk distance, we derive the following system of equations:
  \begin{align*}
      1&= \lim_{k \to \infty} \frac{| x_{n_k} - a_{n_k}^- | }{| x - a_{n_k}^- |}=\lim \frac{| a_{n_k}^+ - a_{n_k}^- | - | x_{n_k} - a_{n_k}^+ | }{| x - a_{n_k}^- |}\\
      &= \lim_{k \to \infty} \left\lbrace \frac{| a_{n_k}^+ - a_{n_k}^- | }{| x - a_{n_k}^- |} - \frac{ | x_{n_k} - a_{n_k}^+ | }{| x - a_{n_k}^+ |}\cdot\frac{| x - a_{n_k}^+ |}{| x - a_{n_k}^- |} \right\rbrace \\
      & = \lim_{k \to \infty}  \frac{| a_{n_k}^+ - a_{n_k}^- |  - \frac{1}{\delta} \cdot | x - a_{n_k}^+ | }{| x - a_{n_k}^- |} =  \lim_{k \to \infty}  \frac{| a_{n_k}^+ - a_{n_k}^- |  - | x - a_{n_k}^+ | + \frac{\delta -1}{\delta} \cdot | x - a_{n_k}^+ | }{| x - a_{n_k}^- |}  \\
      &=\lim_{k \to \infty} \frac{| x - a_{n_k}^- | + \frac{\delta -1}{\delta} \cdot | x - a_{n_k}^+ | }{| x - a_{n_k}^- |} = 1 +  \frac{\delta -1}{\delta}  \cdot \lim_{k \to \infty} \frac{ | x - a_{n_k}^+ | }{| x - a_{n_k}^- |}, 
  \end{align*}
  where we define $1/\infty=0$ and $(\infty-1)/\infty=1$.
  There is a subsequence of  $(\frac{ | x - a_{n_k}^+ | }{| x - a_{n_k}^- |})$ whose limit is neither $0$ nor $\infty$ since the set $\Omega$ is a closed and bounded convex set with non-empty interior. This leads to a contradiction unless $\delta=1$. Thus, $\delta$ must be equal to $1$. As a result, $\mathcal{F}_\Omega(x,x_n)$ converges to $0$. 
A similar argument shows the reverse implication. Therefore, the Funk metric satisfies Busemann's axiom.

 %Recall that the Funk metric is not convergence-symmetric since it is forward complete but not backward complete. Using the definition, we can also show that the Funk metric does not satisfy the convergence-symmetry property. Let us find $\{x_n\}$ and $\{y_n\}$ in $\Omega$ such that while the sequence $\mathcal{F}_\Omega(y_n,x_n)$ converges to 0, $\mathcal{F}_\Omega(x_n,y_n)$ converges to a non-zero value. To see how to select $\{x_n\}$ and $\{y_n\}$, let us consider $\Omega := \{ (x,y) \in \mathbb{R}^2 \ : \  x^2 + y^2 < 1\} $.
 %Choose  $x_n$ and $y_n$ as $\left( 1- \frac{1}{n},0\right) $ and $\left( 1- \frac{1}{2n},0\right), $ respectively. In this case, while the sequence $\mathcal{F}_\Omega(y_n,x_n)$ converges to $0 $, sequence $\mathcal{F}_\Omega(x_n,y_n)$ converges to $\log 2$. Moreover, this method can be done for arbitrary ray in a closed and bounded convex set.

We noted that the Funk metric is not convergence-symmetric, as it is forward complete but not backward complete. We can also show  directly from its definition that this metric fails to satisfy the convergence-symmetry property. Specifically, we aim to find sequences $(x_n)$ and $(y_n)$ in $\Omega$ such that while $\mathcal{F}_\Omega(y_n, x_n)$ converges to $0$, the sequence $\mathcal{F}_\Omega(x_n, y_n)$ converges to a non-zero value.

We set $\Omega := \{ (x, y) \in \mathbb{R}^2 : x^2 + y^2 \leq  1\}$. Let $(x_n)$ and $(y_n)$ be the sequences $\left( 1 - \frac{1}{n}, 0 \right)$ and $\left( 1 - \frac{1}{2n}, 0 \right)$ respectively. Then, the sequence $\mathcal{F}_\Omega(y_n, x_n)$ converges to $0$, while the sequence $\mathcal{F}_\Omega(x_n, y_n)$ converges to $\log 2$.

This construction can be easily generalised to an arbitrary ray in the Funk topology of any closed and bounded convex set with non-empty interior.  
\end{proof}
 
\begin{example}[The Hilbert metric] \label{ex:Hilbert}
The Hilbert metric $H$ on $\mathring{\Omega}$ is defined by
$$H(x,y)=\frac{1}{2}[\mathcal{F}(x,y)+\mathcal{F}(y,x)]=\frac{1}{2}\log\frac{\lvert x - a^+\rvert }{\lvert y-a^+\rvert}+\frac{1}{2} \log\frac{\lvert y - a^-\rvert }{\lvert x-a^-\rvert},$$
 and it is a complete symmetric metric. 
  \end{example}

 \begin{example}[The weighted Funk metric]
 \label{ex:weighted-Funk}
 
 For every $t\in(0,1)$,  the weighted Funk metric $\mathcal{F}^a_t$ on $\mathring{\Omega}$ is defined by 
$$\mathcal{F}^a_t(x,y)=(1-t)\mathcal{F}(x,y)+t\mathcal{F}(y,x).$$
This metric is asymmetric if $t\neq 1/2$. The weighted Funk metric $\mathcal{F}^a_{1/2}$ is the Hilbert metric. It is not difficult to see that $\mathcal{F}^a_t$ is convergence-symmetric, using the fact that if $(\F^a(x_n, y_n))$ converges to $0$ then both $(\F(x_n, y_n))$ and $(\F(y_n,x_n))$ converge to $0$.

Another family of weighted Funk metrics $\mathcal{F}^m$ is defined by 
$$\mathcal{F}^m_t(x,y)=\max\{(1-t)\mathcal{F}(x,y),t\mathcal{F}(y,x)\}$$
for all $t\in (0,1)$.
The metric $\mathcal{F}^m_t$ is asymmetric for each $t\neq 1/2$ and again it is easy to see that it is convergence-symmetric, as in the case of the metric $\mathcal F^a_t$.
 \end{example}

%
%Let $\Omega$ be, as before, a closed bounded convex subset of $\R^n$ with non-empty interior $\mathring{\Omega}$  and let $t\in (0,1)$. From the Funk metric $\mathcal{F}(x,y)$, 

In some instances, it is useful to relax the first axiom of a metric space and to introduce the following notion.

\begin{definition}[Weak metric]
    A weak metric space is a non-empty set  $X$ together with a function 
$$d: X\times X\to [0,\infty)$$
satisfying the following properties:
\begin{enumerate}
    \item 
$d(x,y)=0$ if  $x=y$,
\item 
$d(x,y)+d(y,z)\geq d(x,z)$ for all $x,y,z$ in $X$.
\end{enumerate}
\end{definition}

The following terminology for metric spaces applies to weak metric spaces as well:

\begin{definition}[Busemann's axiom and convergence symmetry]
A weak metric satisfies Busemann's axiom if $d(x_n,x)\to 0$ if and only $d(x,x_n)\to0$ as $n\to \infty$  for all sequences  $(x_n)$ in $X$ and $x\in X$. A weak metric is called convergence-symmetric if  for any two sequences $(x_n)$ and $(y_n)$ in $X$, $d(x_n,y_n)\to 0$ implies $d(y_n,x_n)\to 0$ as $n\to \infty$. 

\end{definition}

\begin{remark}[The metric space associated with a weak metric space satisfying Busemann's axiom]
    Let $(X,d)$ be a weak metric space satisfying Busemann's axiom. We say that two elements $x$ and $x'$ in $X$ are equivalent and write $x\sim x'$ if $d(x,x')=0$. Let us verify that $\sim$ is an equivalence relation. Assume that $x\sim y$, that is,  $d(x,y)=0$. Let $(x_n)$ be the constant 
    sequence such that $x_n=x$ for all $n$. Then we have
    $0=d(x,y)=\lim_{n\to\infty}d(x_n,y)=\lim_{n\to \infty}d(y,x_n)=d(y,x).$
 It follows that $d(y,x)=0$ and $y\sim x$. Now assume that $x\sim y$ and $y\sim z$. Then $d(x,y)+d(y,z)=0$. The triangle inequality implies that $d(x,z)=0$. Hence $x\sim z$.
Thus we have shown that $\sim$ is an equivalence relation.

    Let $\tilde{X}$ be the set of equivalence classes of elements in $X$, and denote the equivalence class of an element $x$ by $[x]$. We define $\tilde{d}([x],[y]])=d(x,y).$
Let us sshow that this is well defined, that is, $\tilde{d}([x],[y])$ does not depend on the representatives $x$ and $y$.  Assume that $[x]=[x']$ and $[y]=[y']$. We have  $d(x',y')=d(x,x')+d(x',y')+d(y',y)\geq d(x,y),$therefore $d(x',y')\geq d(x,y)$. Similarly, $d(x',y')\leq d(x,y)$. Thus $d(x,y)=d(x',y')$ and this proves the claim. It is easy to see that $\tilde{d}$ satisfies Busemann's axiom.
In the same way, we can see that if $d$ is convergence-symmetric, then $\tilde{d}$ is convergence-symmetric.

 \end{remark}

For a metric space $(X,d)$ and  an equivalence relation $\sim$ on $X$, we can define a weak metric $d'$ on $X/\sim$ as in Example \ref{ex:equivalence} below.
%A metric $d'$ on the quotient space $X/\sim$ can be defined using sequences of points within the equivalence classes. 
Even if we start with a metric $d$ satisfying Busemann's axiom, the metric $d'$ does not necessarily satisfy this property. This will be seen through a counterexample; see Examples \ref{ex:equivalence} and \ref{rem:equivalence}.
%In order to build this counterexample, we shall use the metric defined next.

\begin{example}\label{ex:equivalence}
    Let $(X,d)$ be a  metric space and let $\sim$ be an equivalence relation on $X$. 
    Let us denote the equivalence class of an element $x\in X$ by $[x]$. 
    We can define a weak metric on $X/\sim$ as follows:
$$d'([x],[y])=\inf \{d(x_1,y_1)+d(x_2,y_2)+\cdots + d(x_{n-1},y_{n-1})+d(x_n,y_n)\},$$
   where the infimum is taken over all finite sequences $(x_1,x_2,\dots, x_n)$ and $(y_1,y_2,\dots,y_n)$ such that $[x_1]=[x]$, $[y_n]=[y]$, $[y_i]=[x_{i+1}]$ for $i=1,2,\dots n-1$.
\end{example}

\begin{example} \label{rem:equivalence}
Let \(d\) be a metric satisfying Busemann's axiom and \(\sim\) an equivalence relation on \(X\). We aim to show that the weak metric \(d'\), defined by \(d\) and \(\sim\) as in Example \ref{ex:equivalence}, does not necessarily satisfy Busemann's axiom. Consider the metric \(d : [0,\infty) \times [0,\infty) \to \mathbb{R}\), defined in Example \ref{busemannbölümuzayı}:
\[
d(x, y) =
\begin{cases}
e^y - e^x, & y \geq x, \\
x - y, & x > y,
\end{cases}
\]
and  an equivalence relation \(\sim\) on $[0,\infty)$ such that all natural numbers are identified and  any other real number is equivalent only to itself. 
We denote the equivalence class of an element in $x\in \R$ by $[x]$. Let us show that the weak metric \(d'\), derived from \(d\) and \(\sim\) does not satisfy Busemann's axiom.

 Consider a sequence \(\alpha_n =[x_n] = \left[n + \frac{1}{n}\right]\) and the point \(\left[1\right]\). We first compute \(d'\left(\alpha_n, \left[1\right]\right)\). By definition of \(d'\), we have:
\[
d'\left(\alpha_n, \left[1\right]\right) \leq d\left(n + \frac{1}{n}, n\right) = n + \frac{1}{n} - n.
\]
which implies that \(d'\left(\alpha_n, \left[1\right]\right) \to 0\).

We next compute \(d'\left(\left[1\right], \alpha_n\right)\). 
Suppose, seeking a contradiction, that \(d'\left(\left[1\right], \alpha_n\right) \to 0\). 
By the definition of convergence and \(d'\), this implies that for any \(\epsilon > 0\), there exists a finite sequence of points \((x_1, \dots, x_m)\) and \((y_1, \dots, y_m)\) such that:
\[
[x_1] = [1], \quad [y_m] = [\alpha_n], \quad [y_i] = [x_{i+1}] (i=1, \dots, m-1),
\]
and
\[
\sum_{i=1}^m d(x_i, y_i) < \epsilon.
\]
Using the triangle inequality and the definition of the equivalence relation, we obtain
\[
d\left(x_1, n + \frac{1}{n}\right) \leq \sum_{i=1}^m d(x_i, y_i) < \epsilon.
\]
This leads to a contradiction, as \(\lim_{n\to \infty} \inf_{m\in \mathbb{N}}d(m, n + \frac{1}{n}) \neq 0\). Hence, \(d'\left(\left[1\right], \alpha_n\right)\) does not converge to \(0\).

Thus, the weak metric \(d'\) does not satisfy Busemann's axiom, for   \(d'([x_n], [x]) \to 0\) whereas  $d'([x], [x_n])$ does not converge to $0$.
\end{example}

Before proceeding, we give another example of weak metric.

\begin{example}[The Apollonian weak metric of the upper-half-plane]
    Consider the upper-half plane $\mathbb{H}^2=\{\zeta \in \mathbb{C}: \text{Im} (\zeta)>0 \}$. Let 
   $M: \mathbb{H}^2\times \mathbb{H}^2\to \R$
be a function defined by 
    $$M(\zeta,\zeta')=\sup_{x\in \R}\frac{\lvert\zeta'-x\rvert}{\lvert\zeta-x\rvert}.$$

\noindent     Then 
    $$\delta(\zeta,\zeta'):=\log M(\zeta,\zeta')$$
    \noindent is a weak metric. It  is neither symmetric nor a metric, that is, it does not separate points. It turns out that this weak metric is an analogue of the Thurston metric for the case of the Teichm\"uller space of the  torus, see \cite{2005c, OMP2020}. This metric is a special case of  metrics which were called in \cite{2007ab} the Apollonian weak metrics.
    % that we recall in the next example.

\end{example}
%
%\begin{example}[Apollonian weak metric]
%\label{s:Apollonian-metric}
%
%We recall the notion of Apollonian weak metric defined in  \cite{2007ab} (called there the Apollonian semi-metric).
%Let $A$ be an open subset of $\mathbb{R}^n$ and let $\partial A= \overline{A}\setminus A$ be its boundary. We assume as in \cite{2007ab} that either $A$ is bounded or $\partial A$ is unbounded. 
% The \emph{Apollonian weak metric $\delta_{A}$ on $A$} is defined by the formula
% \[\delta_A(x,y)=\sup_{a\in \partial A} \log \frac{\vert x-a\vert}{\vert y-a\vert}.
% \]
%  
%\end{example}
% 
% \begin{question}
% I will include a question about the convergence-symmetry of the Apollonian metric but I have to think about it before ???
% \end{question}
% 
% 
%\begin{example}
%    Let $A$ be a proper open subset of $\R^n$.  Consider  the function $i_A$ on $A \times A$ defined by
%
%    $$i_A(x,y)=\log(1+\frac{\lvert x-y\rvert}{d(x,\partial A)}),$$
%where $\partial A$ is the boundary of $A$ and $d(x,\partial A)$ is the distance between $\partial A$ and $x$. The function $i_A$ is a weak metric on $A$. See \cite[Proposition 4.1]{2007ab}.
%%\end{example}
%
%Let $M$ be a differentiable manifold and $TM$ the tangent bundle of $M$. For any $x\in M$, we denote the tangent space at $x$ by $T_xM$.
%
% \begin{question}
% Same thing about the above metric, I have to figure out also if it has a name ???
% \end{question}

\section{On the topology of the metrics satisfying Busemann's axiom}\label{section:basic}

We first introduce two topologies for a (not necessary symmetric) metric space.
Before that, we define the term forward/backward convergence.

\begin{definition}[Forward/backward convergence]
Let $(X,d)$ be a metric space.
We say that a sequence $(x_n)$ in $X$ forward converges to $x\in X$ if $d(x_n,x)\to 0$ as $n\to\infty$, and backward converges to $x$ if $d(x,x_n) \to 0$ as $n \to \infty$.
\end{definition}

Now we define forward and backward topologies.
\begin{definition}
\label{topology}
Let $(X,d)$ be a metric space.
A subset $U$ of $X$ is said to be  forward open  when for any $x \in U$ and any sequence $(x_n \in X)$ forward converging to $x$, there exists $n_0$ such that $x_n$ lies in $U$ for every $n \geq n_0$.
In the same way, $U$ is said to be  backward open when for any $x \in U$ and a sequence $(x_n \in X)$ backward converging to $x$, there exists $n_0$ such that $x_n$ lies in $U$ for every $n \geq n_0$.
It is quite easy to see that both collections of forward open sets and backward open sets satisfy the axioms of open sets.
We call the topology on $X$ defined by the forward open sets the forward topology, and the one defined by the backward open sets the backward topology.
\end{definition}

The following is an easy consequence of Busemann's axiom.

\begin{lemma}
\label{Busemann same top}
If $(X,d)$ satisfies Busemann's axiom, then any sequence forward converging to $x$ also backward converges to $x$.
Therefore, the forward topology and the backward topology on $X$ coincide.
\end{lemma}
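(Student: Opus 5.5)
The first assertion is a direct restatement of Busemann's axiom. If $(x_n)$ forward converges to $x$, then $d(x_n,x)\to 0$, and Busemann's axiom gives $d(x,x_n)\to 0$, which is backward convergence to $x$. The same axiom read in the other direction shows that backward convergence to $x$ implies forward convergence to $x$. So in a space satisfying Busemann's axiom, the sequences forward converging to $x$ are exactly the sequences backward converging to $x$.

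For the coincidence of the topologies, I will use Definition \ref{topology}. There, a set $U$ is forward (resp. backward) open exactly when every sequence forward (resp. backward) converging to a point of $U$ eventually lies in $U$. Both conditions quantify over the same class of sequences, since that class was just shown to be identical. Hence a set is forward open if and only if it is backward open, and the two topologies are equal as collections of subsets.

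I do not expect a real obstacle. The only care needed is to point out that the openness conditions in Definition \ref{topology} depend only on the notion of convergent sequence, so the argument needs no neighbourhood bases or balls. This matters because forward and backward balls may differ as sets, and comparing them directly would require a uniformity that Busemann's axiom does not provide.
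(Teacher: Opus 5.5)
Your argument is correct and is essentially the paper's proof: Busemann's axiom, read in both directions, makes the forward- and backward-convergent sequences at each point the same, so the sequential openness conditions of Definition~\ref{topology} coincide. You make the converse direction explicit, which the paper uses only tacitly. Your closing aside is slightly overstated, though it does not affect the proof: the fixed-centre ball comparison (for each $x$ and $r>0$ some $s>0$ gives $\{y: d(x,y)<s\}\subset\{y: d(y,x)<r\}$) follows from Busemann's axiom by the same sequence argument, with no uniformity needed.
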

\begin{proof}
The first statement is just a consequence of the definition of forward/backward convergence.

We have only to show that the open sets coincide for both topologies.
Let $U$ be a forward open set, and $x$ a point in $U$.
Suppose that a sequence  $(x_n)$ in $X$ backward converges to $x$.
Then the first part implies that $(x_n)$ also forward converges to $x$, and since $U$ is forward open, there is $n_0$ such that $x_n$ lies in $U$ for every $n \geq n_0$.
This shows that $U$ is also backward open.
The same argument shows also the opposite implication.
\end{proof}
Therefore, if $(X,d)$ satisfies Busemann's axiom, we can simply talk about the topology of $X$ and convergence in $(X,d)$.
This means that we can also talk about other notions used for topological spaces, such as closed set, closure, interior, density, isolated points, accumulation points etc. without specifying forward or backward.

%\begin{definition}[Metric]
%Let $X$ be a non-empty set.  A   metric on $X$ is a function $d: X\times X\to [0,\infty)$ satisfying the following properties:
%
%\begin{enumerate}
%    \item 
%$d(x,y)=0$ if and only if $x=y$,
%\item 
%$d(x,y)+d(y,z)\geq d(x,z)$ for all $x,y,z$ in $X$.
%
%
%\end{enumerate}
%A set together with a metric is called a metric space. We say that the metric $d$ satisfies  Busemann's axiom if the following property holds:
%
%$d(x_n,x)\to 0$ as $n\to \infty$ if and only if $d(x,x_n)\to 0$ as $n\to \infty$ for all sequence $(x_n)$ in  $X$ and for all $x\in X$.
%
%\end{definition}

We can define two kinds of symmetrisation of a metric as follows.

\begin{definition}[Arithmetic and sup symmetrisation]
Let $(X,d)$ be a metric space. We define  the \emph{arithmetic symmetrisation of $d$} by  
$$d_{\mathrm{arith}}(x,y)=\frac{d(x,y)+d(y,x)}{2}$$
and the \emph{sup symmetrisation of $d$} by  
$$d_{\mathrm{max}}(x,y)=\max\{d(x,y), d(y,x)\}.$$
\end{definition}

Observe that both $d_{\mathrm{max}}$ and $d_{\mathrm{arith}}$ are symmetric metrics on $X$, and also that the following inequality is valid for every $x,y \in X$:
$$\frac{d_{\mathrm{max}}(x,y)}{2}\leq d_{\mathrm{arith}}(x,y)\leq d_{\mathrm{max}}(x,y).$$
This implies that $d_{\mathrm{max}}$ and $d_{\mathrm{arith}}$ are equivalent symmetric metrics.

 The following is an easy consequence of Lemma \ref{Busemann same top}.
\begin{proposition}\label{prop:converge}
Let $(X,d)$ be a metric space satisfying Busemann's axiom.
Then for any $x\in X$ and $(x_n)$ a sequence in $X$, the following are equivalent:
\begin{enumerate}
    \item 
    $(x_n)$ converges to $x$ in $(X,d)$.
    \item 
    $(x_n)$ converges to $x$  in $(X, d_{\max})$.
    \item 
   $(x_n)$ converges to $x$  in $(X, d_{\mathrm{arith}})$.
\end{enumerate}
\end{proposition}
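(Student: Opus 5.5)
By Lemma \ref{Busemann same top}, the forward and backward topologies coincide. So ``$(x_n)$ converges to $x$ in $(X,d)$'' is unambiguous: it means $d(x_n,x)\to 0$, or equivalently, by Busemann's axiom, $d(x,x_n)\to 0$. Since $d_{\max}$ and $d_{\mathrm{arith}}$ are symmetric metrics, convergence in them has the usual meaning, namely $d_{\max}(x_n,x)\to 0$, respectively $d_{\mathrm{arith}}(x_n,x)\to 0$. The plan is to prove the cycle $(1)\Rightarrow(2)\Rightarrow(3)\Rightarrow(1)$ directly from these definitions.

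For $(1)\Rightarrow(2)$, assume $d(x_n,x)\to 0$. Busemann's axiom gives $d(x,x_n)\to 0$, and the maximum of two sequences tending to $0$ tends to $0$, so $d_{\max}(x_n,x)=\max\{d(x_n,x),d(x,x_n)\}\to 0$.

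For $(2)\Rightarrow(3)$, use the inequality $d_{\mathrm{arith}}\le d_{\max}$ noted just before the proposition.

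For $(3)\Rightarrow(1)$, both terms in $d_{\mathrm{arith}}(x_n,x)=\tfrac12\bigl(d(x_n,x)+d(x,x_n)\bigr)$ are nonnegative, so $0\le d(x_n,x)\le 2\,d_{\mathrm{arith}}(x_n,x)\to 0$. Hence $(x_n)$ forward converges to $x$, which is convergence in $(X,d)$ by Lemma \ref{Busemann same top}.

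There is no real obstacle. The only point needing care is making explicit that ``convergence in $(X,d)$'' is well defined, and this is exactly where Busemann's axiom and Lemma \ref{Busemann same top} are used. The implication $(1)\Rightarrow(2)$ is the only one that needs both directions of convergence. The other implications hold for any metric, via the comparison $\tfrac12 d_{\max}\le d_{\mathrm{arith}}\le d_{\max}$.
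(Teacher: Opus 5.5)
Your proof is correct and is essentially the argument the paper has in mind. The paper gives no written proof, only the remark that the result is an easy consequence of Lemma \ref{Busemann same top} (under Busemann's axiom, forward convergence implies backward convergence), and your cycle $(1)\Rightarrow(2)\Rightarrow(3)\Rightarrow(1)$, using Busemann's axiom together with $\tfrac12 d_{\max}\le d_{\mathrm{arith}}\le d_{\max}$, is exactly the unpacking of that remark.
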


As a corollary, we have the following.

\begin{corollary}
\label{same topology}
For a metric space $(X,d)$ satisfying Busemann's axiom, the topologies of $(X,d), (X,d_{\max})$ and $(X, d_{\mathrm{arith}})$ are the same.
\end{corollary}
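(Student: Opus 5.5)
The plan is to derive Corollary \ref{same topology} from Proposition \ref{prop:converge} and Lemma \ref{Busemann same top}. The point is that in each of the three settings, the topology is determined by its convergent sequences.

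\emph{Describing the three topologies by sequences.} For $(X,d)$, Definition \ref{topology} defines a set $U$ to be (forward) open exactly when every sequence converging to a point of $U$ is eventually in $U$. By Lemma \ref{Busemann same top}, forward and backward open sets agree, so this is the topology of $(X,d)$. For the symmetric metrics $d_{\max}$ and $d_{\mathrm{arith}}$, the metric topology is first countable, so the same description holds. A set $U$ is open for $d_{\max}$ if and only if every $d_{\max}$-convergent sequence with limit in $U$ is eventually in $U$. The nontrivial direction is the standard contrapositive: if no ball $B_{d_{\max}}(x,1/n)$ lies in $U$, pick $x_n\in B_{d_{\max}}(x,1/n)\setminus U$; then $x_n\to x$ but $x_n$ is never in $U$. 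The same holds for $d_{\mathrm{arith}}$.

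\emph{Comparing via convergent sequences.} Proposition \ref{prop:converge} says that a sequence converges to $x$ in $(X,d)$ if and only if it does so in $(X,d_{\max})$, if and only if it does so in $(X,d_{\mathrm{arith}})$. Combined with the sequential descriptions above, the three families of open sets coincide. As a shortcut for $d_{\max}$ versus $d_{\mathrm{arith}}$ alone, one can also use the inequality $\tfrac12 d_{\max}\le d_{\mathrm{arith}}\le d_{\max}$ directly, since it nests the balls of the two metrics.

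\emph{Main obstacle.} The only delicate step is justifying the sequential description of open sets for the symmetric metrics, which relies on first countability of metric topologies. For $(X,d)$ itself, the sequential description is the definition, so nothing further is needed there. I would also double-check that Proposition \ref{prop:converge} gives both directions of each equivalence, which it does by hypothesis.
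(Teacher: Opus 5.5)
Your proposal is correct and is essentially the paper's argument. The paper states this as an immediate corollary of Proposition \ref{prop:converge}, and you make explicit the step it leaves unsaid: the topology of $(X,d)$ (by Definition \ref{topology}) and the metric topologies of $d_{\max}$ and $d_{\mathrm{arith}}$ (by first countability) are all determined by their convergent sequences.
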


\noindent Let $(X,d)$ be a metric space satisfying Busemann's axiom. 
We denote by $\mathcal{T}_d$ the topology induced by $d$, or equivalently by 
$d_{\max}$ or $d_{\mathrm{arith}}$.

\begin{definition}[Sum, Euclidean sum and product of metric spaces]\label{rem:product}
Let $(X_1, d_1),\dots, (X_n,d_n)$ be  metric spaces. Then the following three functions are metrics on $\prod_{i=1}^nX_i$:
\begin{enumerate}
    \item 
    $$\mu_1: (a,b)\mapsto \sum_{i=1}^{n}d_i(a_i,b_i).$$
    \item 
    $$\mu_2: (a,b)\mapsto \sqrt{\sum_{i=1}^n(d_i(a_i,b_i))^2}.$$
    \item 
    $$\mu_{\infty}: (a,b)\mapsto \max_i\{d_i(a_i,b_i)\}.$$
\end{enumerate}
\end{definition}

For each $a,b \in \prod_iX_i$, the following inequalities holds:
\begin{align}
	\label{equation:product}
\mu_{\infty}(a,b)\leq \mu_2(a,b)\leq \mu_1(a,b)\leq  n \mu_{\infty}(a,b).
\end{align}

Observe also that $\mu_{1,\mathrm{arith}}(a,b)=\sum_{i=1}^nd_{i,\mathrm{arith}}(a_i,b_i)$. It is easy to see that if each $d_i$ satisfies Busemann's axiom, then $\mu_1,\mu_2$ and $\mu_{\infty}$ satisfy it as well and that if each $d_i$
is convergence-symmetric, then $\mu_1,\mu_2$ and $\mu_{\infty}$ are convergence-symmetric as well.

Let $(X,d)$ be a metric space and let $Y\subset X$. The restriction of $d$ to $Y\times Y$ is a metric. We can see the following immediately from the definitions.

\begin{proposition}
 If $d$ satisfies Busemann's axiom, then its restriction to $Y$ satisfies this axiom as well. If $(X,d)$ is convergence-symmetric, then $(Y,d) $ is also convergence-symmetric.
\end{proposition}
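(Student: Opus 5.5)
The plan is to unwind the definitions. The restricted function $d|_{Y\times Y}$ is evaluated only on pairs of points of $Y$, and both Busemann's axiom and convergence-symmetry are universal statements over sequences and points. A sequence in $Y$ is in particular a sequence in $X$, and on such pairs the values of $d|_{Y\times Y}$ coincide with those of $d$. So each hypothesis, applied to these particular sequences, gives the corresponding property for $Y$.

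First I will note that $d|_{Y\times Y}$ is again a metric in the sense of Definition \ref{metric}. Both the separation axiom and the triangle inequality hold for all points of $X$, so they hold in particular for points of $Y$. This is the observation preceding the statement.

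For Busemann's axiom, I take $y\in Y$ and a sequence $(y_n)$ in $Y$. Viewing these in $X$, the hypothesis gives $d(y_n,y)\to 0$ if and only if $d(y,y_n)\to 0$. These are exactly the quantities computed with the restricted metric, so the equivalence holds in $(Y,d)$. For convergence-symmetry, I take two sequences $(p_n)$ and $(q_n)$ in $Y$. Regarding them as sequences in $X$, condition \eqref{eq:distance} for $(X,d)$ yields $d(p_n,q_n)\to 0$ if and only if $d(q_n,p_n)\to 0$, which is precisely \eqref{eq:distance} for $(Y,d)$.

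There is no real obstacle. The only point to state explicitly is why restriction is harmless: the axioms quantify universally over sequences in the space and never require producing a point or limit in the space. If a definition demanded that a limit exist inside the space, restriction would need care, but that is not the case here.
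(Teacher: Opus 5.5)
Your proof is correct and matches the paper's approach: the paper simply states that the result follows immediately from the definitions, and you have written out exactly that unwinding (sequences and points of $Y$ are sequences and points of $X$, and the restricted metric takes the same values on them). Your closing remark, that neither axiom requires a limit to exist inside the space, correctly identifies why restriction causes no difficulty.
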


\begin{definition}[Diameter]\label{def:diameter}
Let $(X,d)$ be a metric space. The diameter of a subset $A\subset X$ is the quantity
$$\mathrm{diam}(A)=\sup\{d(r,s): r,s \in A\}.$$
\end{definition}

%
%\begin{definition}[Distance from a point to a set]
%    Let $(X,d)$ be a metric space, $x$ a point in $X$ and $A$ a subset of $X$. The  distance from $x$ to $A$ is the quantity
%    $$dist(x,A)=\inf\{d(x,a): a\in A\}.$$
% The  distance from $A$ to $x$ is the quantity
%        $$dist(A,x)=\inf\{d(a,x): a\in A\}.$$
%
%\end{definition}
%
%
%
%
%
%
%
%
%\
%

For a sequence $(x_n)$ satisfying Proposition \ref{prop:converge}, we let $x=\lim x_n$. Observe that such a point $x$ is unique.  We call it the limit of $(x_n)$. We have the following:

\begin{proposition}
 If $(X_i,d_i)$ with $i=1,\dots, n$ is a sequence of metric spaces satisfying Busemann's axiom and  if $(x_n)$ a sequence in $\prod X_i$, then the following are equivalent: 
\begin{enumerate}
    \item 
    $(x_n)$ is convergent with respect to $\mu_{\infty}$.
    \item 
    $(x_n ) $ is convergent with respect to $\mu_2$
    \item 
        $( x_n ) $ is convergent with respect to $\mu_1$.
        \item 
            $( x_n ) $ is convergent with respect to $\mu_{1,\mathrm{arith}}$.

\end{enumerate}

\begin{proof}
	The equivalence of (1), (2) and (3) follows form Inequality \ref{equation:product}. Properties (3) and (4) in the statement are equivalent since $\mu_1$ satisfies Busemann's axiom.
\end{proof}
\end{proposition}

\medskip

If $Y$ is a subset of $X$, then we can talk about relative open and closed subsets of $Y$. Note that for a subset $A$ of $X$, the closure of $A$, the interior of $A$, the exterior of $A$ and the boundary of $A$ coincide with the closure, interior, exterior and boundary respectively of $A$ when $X$ is regarded as a topological space.

%\begin{definition}[Dense subset]
%We call a subset $A$ of $X$ dense in $X$ if for each $x\in X$ there is a sequence $(a_n)$ in $A$ such that $d(a_n,x)\to 0$. 
%\end{definition}
%Note that since $d(a_n,a)\to 0$ exactly when $d_{\mathrm{arith}}(a,a_n)\to  0$, it follows that $A$ is dense in $X$ if and only if $A$ is dense in $X$ in the topological sense, that is, if $Cl(A)=X$. It follows that the following statements are equivalent:
%
%\begin{enumerate}
%    \item 
%    For every $x\in X$, $dist(x,A)=0$;
%    \item 
%       for every $x\in X$, $dist(A,x)=0$;
%        \item 
%        $A$ has a non-empty intersection with every open set of $X$.
%
%\end{enumerate}
%

\begin{remark}
    Let $(X_i,d_i)$, $1\leq i\leq n$ be a finite family of metric spaces satisfying Busemann's axiom. Consider the metric $\mu_1$ on $\prod X_i$. Since $\mu_{1,\mathrm{arith}}(a,b)=\sum_{i=1}^nd_{i,\mathrm{arith}}(a_i,b_i)$, it follows that 
    the family of sets of the form $U_1\times\dots \times U_n$, where $U_i$ is open in $X_i$, form a basis for the 
    topology $\mathcal{T}_{\mu_1}$. 
\end{remark}

Let $(X_i,d_i)$ be a metric space satisfying Busemann's axiom  for every $1\leq i\leq n$. Let $P=\prod X_i$, let $\pi_i:P\to X_i$ be the projection onto $X_i$ and let $\mu_1$, $\mu_2$, $\mu_{\infty}$ be the metrics defined before.

\begin{proposition}
With the above notation, we have the  following:

\[
\mu_{\infty,\mathrm{arith}} \leq \mu_{2,\mathrm{arith}}
\leq \mu_{1,\mathrm{arith}} \leq n\mu_{\infty,\mathrm{arith}},
\]
\[
\mu_{\infty,\max} \leq \mu_{2,\max}
\leq \mu_{1,\max} \leq n\mu_{\infty,\max}.
\]
\begin{proof}
	This follows from Inequality \ref{equation:product}.
\end{proof}
\end{proposition}
It follows that all the above symmetric metrics generate the same topology on $P$. We call this topology the \emph{product topology}. Now we see that $\pi_i$ is continuous if we consider the product topology on $P$ and the topology $\mathcal{T}_{d_i}$ on $X_i$. 

\begin{proposition}
For $1\leq i\leq n$, let $(X_i,d_i)$ be a metric space satisfying Busemann's axiom.
    Suppose that $(x_n)$ is a sequence in $P$. Then 
    $(x_n)$ converges in $P$ if and only if $(\pi_i(x_n))$ converges in $X_i$ for each $i$. If this occurs, then 
    $\pi_i(\lim x_n)=\lim \pi_i(x_n)$.

    \begin{proof}
    	A sequence $(x_n)$ is convergent in $P$ if and only if it is convergent
    	with respect to the metric $\mu_{1,\mathrm{arith}}$. This is equivalent to saying
    	that, for each $i$, the sequence $(\pi_i(x_n))$ is convergent with respect to
    	$d_{i,\mathrm{arith}}$. Since each $d_i$ satisfies Busemann's axiom, convergence
    	with respect to $d_{i,\mathrm{arith}}$ is equivalent to convergence with respect to
    	$d_i$. Hence, $(x_n)$ converges in $P$ if and only if $(\pi_i(x_n))$ converges
    	in $X_i$ for each $i$.
    	
    	The last assertion follows from the fact that a sequence $(y_n)$ converges to
    	$y$ in $X_i$ with respect to $d_i$ if and only if it converges to $y$ with
    	respect to $d_{i,\mathrm{arith}}$.
    \end{proof}
\end{proposition}
%
%\begin{remark}
%Let $(X,d)$ be a metric space satisfying Busemann's axiom and $(x_n)$ be a sequence in $X$.
%    If $(x_n)$ converges to $x\in X$, then any subsequence of $(x_n)$ converges to $x$.
% 
%\end{remark}
\section{Cauchy Sequences and completion}\label{section:completion}
 
Let $(X,d)$ be a metric space satisfying Busemann's axiom.

\begin{definition}[Forward/backward Cauchy sequences and forward/backward completeness]\label{d:symmetry}
	 A sequence $(x_i)$ in $X$ is called a forward Cauchy sequence if for every $\epsilon >0$ there exists an integer $N$ such that $d(x_i,x_{i+k})<\epsilon$ for every $i\geq N$ and $k\geq 0$. The sequence $(x_i)$ is called a backward Cauchy sequence if for every $\epsilon >0$ there exists an integer $N$ such that $d(x_{i+k},x_i)<\epsilon$ for every $i\geq N$ and $k\geq 0$. The space $X$ is called forward complete if for every forward Cauchy sequence $(x_n)$, there exists $x\in X$ such that $d(x_n,x)\to 0$ as $n\to \infty$. Similarly, $X$ is called backward complete if for every backward Cauchy sequence $(x_n)$, there exists $x\in X$ such that $d(x,x_n)\to 0$ as $n\to \infty$.
\end{definition}

\begin{example}[A space satisfying Busemann's axiom with no backward completion]
This example may be regarded as a continuation of Example \ref{ex:Funk};
it concerns the Funk metric. Let $\Omega$ be a closed bounded convex subset of
$\mathbb{R}^n$ with non-empty interior. Let $\bar{x}$ and $\bar{y}$ be two
distinct points on the boundary of $\Omega$. Let $L_{\bar{x}}$ and
$L_{\bar{y}}$ be two line segments contained in $\Omega$ whose relative
interiors are contained in $\mathring{\Omega}$ and whose endpoints $\bar{x}$ and $\bar{y}$ are in $\partial \Omega$.  

Let $(x_n)$ be a sequence in $L_{\bar{x}}\cap \mathring{\Omega}$ converging
to $\bar{x}$ with respect to the Euclidean distance, and let $(y_n)$ be a
sequence in $L_{\bar{y}}\cap \mathring{\Omega}$ converging to $\bar{y}$ with
respect to the same distance. As before, we denote the Funk metric on
$\mathring{\Omega}$ by $\mathcal{F}$. Then it is not difficult to see that
the sequences $(x_n)$ and $(y_n)$ are backward Cauchy with respect to
$\mathcal{F}$. Moreover,
\[
\mathcal{F}(x_n,y_n)\to \infty
\quad \text{as } n\to\infty .
\]

It follows from the preceding discussion that the Funk metric cannot be
extended to a metric $d$ defined on a larger backward complete space $X$ satisfying the following
property:
 If $(a_n)$ is a sequence in $\mathring{\Omega}$ and $a$ is a point in $X$
such that
$
d(a,a_n)\to 0
\quad \text{as } n\to\infty,
$
and if $(b_n)$ is a sequence in $\mathring{\Omega}$ and $b$ is a point in $X$ such
that
$
d(b,b_n)\to 0
\quad \text{as } n\to\infty,
$
then
$
d(a,b)=\lim_{n\to\infty} d(a_n,b_n).
$
\end{example}

We note that if we allow $d$ to take the value $\infty$ in a larger space, then it is possible to construct a backward complete space containing $X$ as a dense subset.
See Algom--Kfir \cite{Al}.

\medskip

We shall see  in Theorem \ref{th:minimal completion} that every convergence-symmetric metric space has a unique minimal completion that is convergence-symmetric.

The following lemma shows that the notions of forward and backward Cauchy sequences coincide on a convergence-symmetric metric space.
\begin{lemma}
\label{Cauchy}
Let $(X,d)$ be  a convergence-symmetric metric space.
Then a sequence in $X$ is forward  Cauchy if and only if it is backward Cauchy.
\end{lemma}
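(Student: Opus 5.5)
By the symmetry of the statement it suffices to show that a forward Cauchy sequence is backward Cauchy; the converse is the same argument with the roles of $d(x,y)$ and $d(y,x)$ exchanged. The convergence-symmetry condition (\ref{eq:distance}) is symmetric in the two arguments, so it applies equally in both directions. I will argue by contradiction and convert a failure of the backward Cauchy property into a pair of sequences that violates (\ref{eq:distance}).

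Let $(x_i)$ be forward Cauchy and suppose it is not backward Cauchy. Then there is an $\epsilon>0$ such that for every $N$ we can find $i\geq N$ and $k\geq 0$ with $d(x_{i+k},x_i)\geq \epsilon$. Applying this successively with $N=1,2,3,\dots$, I will choose indices $i_n\geq n$ and $k_n\geq 0$ with
\[
d(x_{i_n+k_n},x_{i_n})\geq \epsilon \quad\text{for all } n.
\]
Set $p_n=x_{i_n}$ and $q_n=x_{i_n+k_n}$.

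Since $i_n\to\infty$, the forward Cauchy property gives, for any $\delta>0$, an $N$ such that $d(x_i,x_{i+k})<\delta$ whenever $i\geq N$ and $k\geq 0$. Hence $d(p_n,q_n)=d(x_{i_n},x_{i_n+k_n})<\delta$ for all large $n$, that is, $d(p_n,q_n)\to 0$. Convergence-symmetry then yields $d(q_n,p_n)\to 0$. This contradicts $d(q_n,p_n)\geq\epsilon$ for all $n$, so $(x_i)$ is backward Cauchy.

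There is no real obstacle here. The only point requiring care is the extraction step: the indices must satisfy $i_n\to\infty$ so that the forward Cauchy bound applies uniformly along the chosen pairs. Note also that Busemann's axiom is not needed; only the two-sequence condition is used.
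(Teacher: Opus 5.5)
Your proof is correct and matches the paper's: both argue by contradiction, extract index pairs tending to infinity along which $d(x_{i+k},x_i)\geq\epsilon$, use the forward Cauchy property to get $d(x_i,x_{i+k})\to 0$ along those pairs, and then invoke convergence-symmetry to reach a contradiction. You also handle the converse the same way, by exchanging the roles of the two arguments of $d$.
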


	\begin{proof}
	Let $(x_n)$ be a forward Cauchy sequence in $X$. 
	Assume that $(x_n)$ is not backward Cauchy. 
	Then we can find $\epsilon>0$ such that    for all $N>0$ there exist integers $m(N)\geq n(N)\geq N$ such that $d(x_{m(N)},x_{n(N))})\geq\epsilon$. Now since $(x_n)$ is forward Cauchy, we have $d(x_{n(N)},x_{m(N)})\to 0$ as $N\to \infty$. 
	From the convergence-symmetry property we have $d(x_{m(N)},x_{n(N))})\to 0$ as well. 
	This is a contradiction. Therefore every forward Cauchy sequence is backward Cauchy. By the same proof, the converse holds.  
	\end{proof}

Thus, in a convergence-symmetric   metric space $(X,d)$, we can simply call a sequence Cauchy if it is forward (or equivalently backward) Cauchy. Let $(x_n)$ be a Cauchy sequence in $X$. Then for each $\epsilon >0$ there exists $N$ such that for all $n,m \geq N$ $d(x_n,x_m)<\epsilon$.

\begin{definition}[Complete convergence-symmetric  metric space]
	A convergence-symmetric  metric space  is said to be complete if every Cauchy sequence in this space converges.
\end{definition}

 \begin{remark}
 Every convergent sequence in a convergence-symmetric metric space  is a Cauchy sequence.
 \end{remark}

\begin{theorem} \label{th:minimal completion} Every convergence-symmetric metric space has a unique minimal completion that is convergence-symmetric. 
\end{theorem}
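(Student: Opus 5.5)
We follow the classical Cauchy-sequence construction, using Lemma \ref{Cauchy} to speak simply of Cauchy sequences. Let $\mathcal{C}$ be the set of Cauchy sequences in $(X,d)$. For $(x_n),(y_n)\in\mathcal{C}$ we first show that $\lim_{n\to\infty} d(x_n,y_n)$ exists. By the triangle inequality,
\[
d(x_n,y_n)-d(x_m,y_m)\le d(x_n,x_m)+d(y_m,y_n),
\]
and symmetrically with $n,m$ exchanged. Both Cauchy conditions make the right-hand sides small for $n,m$ large, so $(d(x_n,y_n))$ is a Cauchy sequence of reals and converges. Declare $(x_n)\sim(y_n)$ when $d(x_n,y_n)\to 0$; by convergence-symmetry this is equivalent to $d(y_n,x_n)\to 0$, so $\sim$ is symmetric. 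Transitivity follows from the triangle inequality. Set $\hat X=\mathcal{C}/\!\sim$ and $\hat d([x_n],[y_n])=\lim d(x_n,y_n)$. This is well defined by the same four-term inequality together with convergence-symmetry: changing representatives changes $d(x_n,y_n)$ by at most $d(x_n',x_n)+d(y_n,y_n')$, and both terms tend to $0$. The metric axioms pass to the limit, and separation holds by construction.

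\textbf{Isometric embedding, density, completeness.} Map $x$ to the constant sequence; this is an isometric embedding. Given $\xi=[x_n]$, we have $\hat d(x_m,\xi)=\lim_n d(x_m,x_n)$ and $\hat d(\xi,x_m)=\lim_n d(x_n,x_m)$, and both tend to $0$ as $m\to\infty$, so $X$ is dense in both directions. For completeness, take a Cauchy sequence $(\xi_k)$ in $\hat X$. Choose $x_k\in X$ with $\hat d(\xi_k,x_k)<1/k$ and $\hat d(x_k,\xi_k)<1/k$, which is possible by the density just shown. Then $(x_k)$ is Cauchy in $X$, and its class $\xi$ satisfies $\hat d(\xi_k,\xi)\to 0$.

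\textbf{Main obstacle: convergence-symmetry of $\hat d$.} Let $\hat d(\xi_k,\eta_k)\to0$. Approximate as above by $x_k,y_k\in X$, with errors less than $1/k$ in both directions. Then
\[
d(x_k,y_k)\le \hat d(x_k,\xi_k)+\hat d(\xi_k,\eta_k)+\hat d(\eta_k,y_k)\to 0,
\]
so $d(y_k,x_k)\to0$ by convergence-symmetry of $d$. Hence
\[
\hat d(\eta_k,\xi_k)\le \hat d(\eta_k,y_k)+d(y_k,x_k)+\hat d(x_k,\xi_k)\to 0.
\]
The key point is that density holds in both directions, which is why the approximation must be chosen two-sidedly.

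\textbf{Minimality and uniqueness.} Interpret ``minimal'' as follows: any isometric embedding $\iota:X\to Y$ into a complete convergence-symmetric space extends uniquely to an isometric embedding $\hat X\to Y$. Define the extension by $[x_n]\mapsto\lim\iota(x_n)$; the limit exists because $(\iota(x_n))$ is Cauchy in $Y$. The extension is isometric by continuity of $d_Y$ along convergent sequences, which follows from the same four-term inequality combined with convergence-symmetry, and it is independent of the representative. Consequently any two complete convergence-symmetric spaces containing $X$ densely are isometric via a map fixing $X$. Here density is understood in the topology of Lemma \ref{Busemann same top}, and a convergence-symmetric space satisfies Busemann's axiom by taking $q_n$ constant.
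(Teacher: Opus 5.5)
Your proof is correct. It uses the same construction as the paper: Cauchy sequences modulo $d(p_n,q_n)\to 0$, with distance $\lim d(p_n,q_n)$. The difference is in how you prove the two nontrivial properties of the completion.

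The paper works directly with double-indexed representatives $(p_{n,m})_m$ of the points $P_n$. It gets completeness from an explicit diagonal choice $q_k=p_{N(k),M(N(k),k)}$ with monotone index functions. It gets convergence-symmetry of $\Delta$ by passing from $\lim_n\lim_m d(p_{n,m},q_{n,m})=0$ to diagonal index sequences in $X$.

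You instead first prove that $X$ is dense in both directions, which is exactly where forward and backward Cauchy coincide. You then replace each $\xi_k$ by a point $x_k\in X$ with $\hat d(\xi_k,x_k)<1/k$ and $\hat d(x_k,\xi_k)<1/k$. After that, completeness and convergence-symmetry of $\hat d$ each follow from one triangle inequality plus the corresponding property in $X$.

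What this buys:
\begin{itemize}
\item It is shorter and avoids index bookkeeping. In the paper's step (5) the threshold $M(n)$ depends on $n$, so the passage to index sequences $n(k),m(k)$ needs some care; your two-sided approximation makes this automatic.
\item It isolates where convergence-symmetry of $d$ is used: the equivalence relation, two-sided density, and one application to $(x_k),(y_k)$.
\item You state minimality precisely as a universal property and check that the extension is well defined and isometric. The paper's step (9) only sketches the extension.
\end{itemize}

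Uniqueness up to an isometry fixing $X$ then follows once you note one more fact. The image of the complete space $\hat X$ is closed in $Y$, because $Y$ satisfies Busemann's axiom; since it contains the dense set $X$, it is all of $Y$.

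One presentational point: your completeness step comes before you prove $\hat d$ is convergence-symmetric, so there ``Cauchy'' should be read as forward Cauchy. The argument works verbatim in that case, since $d(x_k,x_l)\le 1/k+\hat d(\xi_k,\xi_l)+1/l$ and $\hat d(\xi_k,\xi)\le 1/k+\lim_l d(x_k,x_l)$. Your convergence-symmetry argument does not use completeness, so nothing is circular.
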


\begin{proof}
Let $(X,d)$ be a convergence-symmetric metric space. We  construct a completion of $X$ with respect to $d$. The outline is the same as the one for symmetric metric spaces, but we point out the steps which depend crucially on the fact that our space satisfies the convergence-symmetry property.

(1)
 Consider the set of Cauchy sequences in $X$. 
We say that two Cauchy sequences $(p_n)$ and $(q_n)$ are equivalent if
\[
d(p_n,q_n)\to 0 \quad \text{as } n\to\infty.
\]
This defines an equivalence relation on the set of Cauchy sequences in \(X\).
Indeed, reflexivity is immediate. Symmetry follows from the convergence-symmetry
property. Finally, if \(d(p_n,q_n)\to 0\) and \(d(q_n,r_n)\to 0\), then the triangle
inequality gives
\[
d(p_n,r_n)\leq d(p_n,q_n)+d(q_n,r_n)\to 0,
\]
which proves transitivity.
 
(2)
If \((p_n)\) and \((q_n)\) are Cauchy sequences, then \((d(p_n,q_n))\) is
Cauchy in \(\mathbb R\). Indeed, by the triangle inequality,
\[
d(p_n,q_n)\leq d(p_n,p_m)+d(p_m,q_m)+d(q_m,q_n),
\]
and the analogous inequality with \(m\) and \(n\) interchanged also holds.
It follows that
\[
|d(p_n,q_n)-d(p_m,q_m)|\to 0
\]
as \(m,n\to\infty\). Hence \((d(p_n,q_n))\) converges.

(3)
Let $X^*$ be the set of equivalence classes of Cauchy sequences in $X$. If $P\in X^*$ and $Q\in X^*$ denote the equivalence classes of $(p_n)$ and $(q_n)$, then we define 
$$\Delta(P,Q)=\lim_{n\to \infty }d(p_n,q_n).$$
This limit exists since $(p_n)$ and $(q_n)$ are both  Cauchy.
Also if $(p'_n)$
and $(q'_n)$ are other representatives of $P$ and $Q$, then we have
$$d(p'_n,q'_n)\leq d(p'_n,p_n)+d(p_n,q_n)+d(q_n,q'_n),$$
  therefore the two sequences $(d(p'_n,q'_n))$ and $(d(p_n,q_n))$ have the same limit. It follows that $\Delta$ is well defined. 

(4)
It is easy to see that $\Delta$ is a function satisfying the  axioms of a metric space.

(5)
We show that $(X^*, \Delta)$ has the convergence-symmetry property. 

Let $(P_n)$ and $(Q_n)$ be two sequences in $X^*$ such that $\Delta(P_n,Q_n)\to 0$ as $n\to\infty$. 
Let $(p_{n,m})_m$ and $(q_{n,m})_m$ be representatives for $P_n$ and $Q_n$ respectively. Then we have 
$\lim_{n\to\infty}\lim_{m\to\infty}d(p_{n,m},q_{n,m})=0,$
which means that for every $\epsilon >0$, there exist $N$ and $M(n)$ such that if $n\geq N$ and $m \geq M(n)$, then $d(p_{n,m}, q_{n,m}) < \epsilon$.
This is equivalent to saying that there exist $n(k)$ and $m(k)$ for $k \in \N$ such that for every $i(k)\geq n(k)$ and $j(k) \geq m(k)$, we have $\lim_{k \to \infty}d(p_{i(k), j(k)}, q_{i(k), j(k)}) \to 0$.
Since $X$ has the convergence-symmetry property, this implies that $\lim_{k \to \infty} d(q_{i(k), j(k)}, p_{i(k), j(k)})=0$ for every $i(k) \geq n(k)$ and $j(k) \geq m(k)$.
By the c. above equivalence, this implies that $\lim_{n\to\infty}\lim_{m\to\infty}d(q_{n,m},p_{n,m})=0,$ which means that $\Delta(Q_n, P_n) \to 0$.
%Assume that $\Delta(Q_n,P_n)\neq 0$. Then, passing to a subsequence if necessary, we may assume that $\Delta(Q_n,P_n)\to A$, where $A\in (0,\infty]$. Then
%we have 
%$$\lim_{n\to\infty}\lim_{m\to\infty}d(q_{n,m},p_{n,m})=A,$$
%which implies that there are sequences $(p_{n(k),m(k)})_k$ and $(q_{n(k),m(k)})_k$ such that 
%$$\lim_{k\to \infty}d(q_{n(k),m(k)},p_{(n(k),m(k))})\to A.$$
%It follows from the convergence-symmetry of $X$ that 
%$(d(p_{n(k),m(k)},q_{(n(k),m(k))}))$ does not converge to $0$.
%Again, passing to a subsequence if necessary, we see that this contradicts the fact that 
%$$\lim_{n\to\infty}\lim_{m\to\infty}d(p_{n,m},q_{n,m})=0.$$

(6)
We prove that $X^*$ is complete. 
Since we already showed that $X^*$ is convergence-symmetric, we have only to prove that $X^*$ is forward complete.
Let $(P_n)$ be a Cauchy sequence in $X^*$, and $(p_{n,m})_m$ a sequence which is a representative of $P_n$.
Since $(P_n)$ is forward Cauchy, for any $k \in \N$, there is $N(k)$ such that for any $n \geq N(k)$ and any non-negative integer $i$, we have $\Delta(P_{n+i}, P_n) < 1/k$.
We can take $N(k)$ so that it is monotone increasing with respect to $k$.
This means that for any $n\geq N(k)$, there is $M(n,i)$ such that for any $m \geq M(n,i)$, we have $d(p_{n+i, m}, p_{n, m}) < 1/k$.
On the other hand, since each $(p_{n,m})$ is a forward Cauchy sequence, there is $M(n,k)$ such that for any $m \geq M(n,k)$ and any non-negative integer $i$, we have $d(p_{n, m+i}, p_{n,m}) < 1/k$.
Again we can take $M(n,k)$ so that it is monotone increasing with respect to both $n$ and $k$.

Now we consider the sequence $Q=(q_k):=(p_{N(k),M(N(k), k)})_k$.
We can see that this sequence is forward Cauchy.
Indeed for any non-negative integer $i$, we have 
$$ d(p_{N(k+i), M(N(k+i), k+i)}, p_{N(k), M(N(k), k)})  $$
$$\leq d(p_{N(k+i), M(N(k+i), k+i)}, p_{N(k), M(N(k+i), k+i)}) $$
$$  +d(p_{N(k), M(N(k+i), k+i)}, p_{N(k), M(N(k),k)})  < 2/k.$$
  
We next show that $d(P_n, Q) \to 0$.
For any $k$, we can choose $K$ such that if $n > K$, then $n>N(k)$.
Then we have $\Delta(P_n, P_{N(k)}) < 1/k$, which means that there is $L(k)$ such that for any $m \geq L(k)$, we have $d(p_{n,m}, p_{N(k), m}) < 1/k$.
We assume that $L(k)\geq M(N(k), k)$, replacing $L(k)$ with a bigger one if necessary.
Then we also have $d(p_{N(k), m}, p_{N(k), M(N(k), k)})<1/k$, and hence $d(p_{n,m}, q_k)<2/k$.
Since $Q$ is also backward Cauchy, for sufficiently large $k$, we can take a larger $m$ so that $d(q_k, q_m)< 1/k$, and hence we have $\lim_{m\to \infty} d(p_{n,m}, q_m) \leq 3/k$.
Thus we have shown that $\Delta(P_n, Q) \to 0$.
%If $(P_n)$ is a Cauchy sequence in  $X^*$, let $(p_{n,m})_m$ be a representative for $P_n$. Consider the sequence $P=(p_{n,n})$. 
%Then  we have $P_n\to P$  as $n\to\infty$. 

(7)
For each $p\in X$, consider the constant sequence all of whose terms are $p$. This is evidently a Cauchy sequence. Let $P_p$ be the element of $X^*$ which contains this sequence. It is obvious that $\Delta(P_p,P_q)=d(p,q).$
 Therefore there is a distance-preserving map 
$\phi: X\to X^*$
given by $\phi(p)=P_p$.

(8)
It is not difficult to see $\phi(X)$ is dense in $X^*$. Indeed let $P\in X^*$ be represented by the sequence $(p_n)$. Let $P_n=Pp_n$. Then $P_n\to P$ as $n \to \infty$.

(9)
Let $(X',d')$ be a  metric space satisfying the convergence-symmetry property and assume that it is complete.  
Suppose that  there is a distance-preserving map $\phi':X\to X'$.
Then $\phi'$ can be extended to a map $X^*\to X'$ by sending 
an element $P$ represented by a Cauchy sequence $(p_n)$ to the unique limit  of $(\phi'(p_n))$ in $X'$.
This means that $X^*$ is the minimal complete convergence-symmetric metric space containing $X$.
 
\end{proof}

\begin{remark}
Let $(X,d)$ be a convergence-symmetric metric space. Let $(x_n)$ be a sequence in $X$. The following are equivalent:

\begin{enumerate}
    \item 
    $(x_n)$ is a Cauchy sequence with respect to $d$.
    \item 
    $(x_n)$ is a Cauchy sequence with respect to $d_{\mathrm{arith}}$.
    \item 
       $(x_n)$ is a Cauchy sequence with respect to $d_{\mathrm{max}}$.
\end{enumerate}

It follows from the above properties that the arithmetic (resp. max) symmetrisation of the completion of $(X,d)$ is the usual completion of the arithmetic (resp.  max) symmetrisation of the metric $d$.

\end{remark}

 The following theorem establishes a relation between the completeness of a metric space satisfying the convergence-symmetry property and the completeness in the usual sense of a symmetric metric space. The proof is a straightforward application of  Lemma \ref{Cauchy}.
\begin{theorem} \label{thm-crucial}
	Assume that $(X,d)$ is a convergence-symmetric metric space. Then the following are equivalent.
	\begin{enumerate}
		\item 
		$(X,d)$ is complete.
		\item 
		$(X,d_{\mathrm{arith}})$ is complete.
		\item 
		$(X,d_{\mathrm{max}})$ is complete.
		
	\end{enumerate}
%
%    \begin{proof}
%    Assume that $(X,d)$ is complete. Let $(x_n)$ be a Cauchy sequence in $X$ with respect to the metric $d_{\mathrm{arith}}$. Then $(x_n)$ is Cauchy with respect to $d$. It follows that $(x_n)$ converges to $x$ with respect to the metric $d$, thus $x_n$ converges to $x$ with respect to the metric $d_{\mathrm{arith}}$. Hence $(X,d_{\mathrm{arith}})$ is complete. Now assume that $(X,d_{\mathrm{arith}})$ is complete. Let $(x_n)$ be a Cauchy sequence with respect to the metric $d$. Then $(x_n)$ converges to $x$ with respect to $d_{artih}$ for some $x\in X$. Then $(x_n)$ converges to $x$ with respect to $d$. Therefore $(X,d)$ is complete. Since $d_{\mathrm{arith}}$ and $d_{\mathrm{max}}$ are equivalent metrics, the statements (2) and (3) are equivalent.
%    \end{proof}
\end{theorem}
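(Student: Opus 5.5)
The plan is to reduce everything to two facts: Cauchy sequences are the same for $d$, $d_{\mathrm{arith}}$ and $d_{\mathrm{max}}$, and convergent sequences are the same for these three metrics. Once both are in place, the three completeness notions coincide.

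The equivalence of (2) and (3) is immediate from the inequality $\frac{1}{2}d_{\mathrm{max}}\le d_{\mathrm{arith}}\le d_{\mathrm{max}}$. A sequence is Cauchy for one of these metrics if and only if it is Cauchy for the other, and it converges to $x$ for one if and only if it does for the other.

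For the equivalence of (1) and (2), I will first check that a convergence-symmetric space satisfies Busemann's axiom: take $q_n=x$ constant in the convergence-symmetry property. Proposition \ref{prop:converge} then says that $d(x_n,x)\to 0$ if and only if $d_{\mathrm{arith}}(x_n,x)\to 0$. Next, I will show that $(x_n)$ is Cauchy for $d$ if and only if it is Cauchy for $d_{\mathrm{arith}}$, which is the content of the preceding remark.

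One direction is trivial: $d(x_n,x_m)\le 2d_{\mathrm{arith}}(x_n,x_m)$, so a $d_{\mathrm{arith}}$-Cauchy sequence is forward Cauchy, hence Cauchy for $d$. For the converse, let $(x_n)$ be forward Cauchy. By Lemma \ref{Cauchy} it is also backward Cauchy. So for $\epsilon>0$ there is $N$ with $d(x_i,x_{i+k})<\epsilon$ and $d(x_{i+k},x_i)<\epsilon$ for all $i\ge N$, $k\ge 0$. This gives $d_{\mathrm{arith}}(x_n,x_m)<\epsilon$ for all $n,m\ge N$. This step is the only place where convergence-symmetry is genuinely used, and it is the main point of the argument. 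Everything else is bookkeeping.

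With these facts the conclusion follows. If $(X,d)$ is complete and $(x_n)$ is $d_{\mathrm{arith}}$-Cauchy, then $(x_n)$ is $d$-Cauchy. It therefore converges to some $x$ for $d$, and hence for $d_{\mathrm{arith}}$. Conversely, if $(X,d_{\mathrm{arith}})$ is complete and $(x_n)$ is $d$-Cauchy, then $(x_n)$ is $d_{\mathrm{arith}}$-Cauchy. It converges to some $x$ in $d_{\mathrm{arith}}$, hence $d(x_n,x)\to 0$.
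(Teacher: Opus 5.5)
Your proof is correct and takes essentially the same approach as the paper, which presents the result as a direct application of Lemma~\ref{Cauchy}: Cauchy sequences for $d$ and $d_{\mathrm{arith}}$ coincide, convergence for $d$ and $d_{\mathrm{arith}}$ coincides via Busemann's axiom (which you correctly derive from convergence-symmetry by taking a constant sequence), and (2)$\Leftrightarrow$(3) follows from $\frac{1}{2}d_{\mathrm{max}}\le d_{\mathrm{arith}}\le d_{\mathrm{max}}$. Your write-up also supplies details that the paper leaves implicit.
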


Note that no equivalent statement holds for spaces satisfying Busemann's axiom.
The Funk metric is forward complete and not backward complete, and its arithmetic symmetrisation (which is the Hilbert metric) is complete.

\begin{definition}[Equivalence of convergence-symmetric metrics]
Two con\-vergence-symmetric metrics $d$ and $d'$ on a set $X$ are said to be equivalent if there exist constants $C,D>0$ such that the following inequalities hold for all $x,y\in X$:
$$Cd'(x,y)\leq d(x,y)\leq D d'(x,y).$$
\end{definition}
\noindent Note that equivalent metrics $d$ and $d'$ define the same topology, and that a sequence $(x_n)$ converges to $x$ with respect to $d$ if and only if it converges to $x$ with respect to $d'$. Also if $(x_n)$ is Cauchy with respect to $d$ then it is Cauchy with respect to $d'$, which implies that $(X,d)$ is complete if and only if $(X,d')$ is complete.

\begin{example}[Completeness of the weighted Funk metric]
Let $\Omega$ be a closed bounded convex subset of $\R^n$ with non-empty interior $\mathring{\Omega}$. 
Assume that $\partial\Omega$ does not contain any line segment. In this case, it is well known that the line segments are the only geodesics of the Hilbert metric $H$ on $\Omega$. 
We see that  $\mathring{\Omega}$ is geodesically complete then.  Therefore, the Hopf--Rinow theorem implies that $(\mathring{\Omega},H)$ is complete. By Theorem \ref{thm-crucial}, it follows that for any $t\in (0,1)$, the metric space $(\mathring{\Omega},\mathcal{F}^a_t)$ is complete. It also follows from the same proposition that
$(\mathring{\Omega},\mathcal{F}^m_t)$ is complete if and only if $\mathring{\Omega}$ is complete
with respect to the metric $\max\{\mathcal{F}(x,y),\mathcal{F}(y,x)\}$.
 
 \end{example}
 
\begin{definition}[Bounded set and bounded map]
\label{def:bounded-sets}
Let $(Y,d)$ be a metric space.
A subset $A$ of $Y$ is called bounded if its diameter is finite. Let $X$ be a set and  $f: X\to Y$  a map. We say that $f$ is bounded if its range is bounded. The set of bounded maps $X\to Y$ is denoted by $B(X,Y)$.
\end{definition}

\begin{remark}
   It is easy to see that a Cauchy sequence in a convergence-symmetric metric space is bounded as a subset.
\end{remark}

\begin{proposition}
\label{uniform symmetric}
Let $(Y,d)$ be a metric space.
Consider the following function $s:B(X,Y)\times B(X,Y)\to \R$:
$$s(f,g)=\sup \{d(f(x),g(x)): x\in X\}.$$
Then $s$ is a convergence-symmetric metric on $B(X,Y)$ provided that $(Y,d)$ is a convergence-symmetric metric space. 
    \begin{proof}
    It is easy to see that $s$ satisfies the first two axioms of a convergence-symmetric metric space.
    We show that the convergence-symmetry axiom holds as well. Let $(f_n)$ and $(g_n)$ be sequences  of bounded functions such that $s(f_n,g_n)\to 0$  as $n\to \infty$. 
  Assume, seeking a contradiction, that $s(g_n,f_n)$ does not converge to $0$. Then there is a sequence of elements $(x_n)$ in $X$ such that  $d(g_n(x_n),f_n(x_n))$ does not converge to $0$. 
    It follows that $d(f_n(x_n),g_n(x_n))$ does not converge to $0$ by the convergence-symmetry of $Y$. 
    This contradicts the assumption that 
    $s(f_n,g_n)\to 0$ as $n \to \infty$.
    \end{proof}
\end{proposition}

Next, we shall consider pointwise convergence and a sequential criterion for continuity.

Let $X$ be a set and $(Y,d)$ a convergence-symmetric  metric space. We saw in Proposition \ref{uniform symmetric} that the space $B(X,Y)$ of bounded functions from $X$ to $Y$  is endowed with a metric $s$, which we call the supremum metric.
The following is an immediate consequence of  the definition of $s$.
\begin{proposition}
    Let $(f_n)$ be a sequence in $B(X,Y)$ converging to $f\in B(X,Y)$. Then, for every $z\in X$,  $f_n(z)$ converges to $f(z)$.
\end{proposition}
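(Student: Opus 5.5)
The statement is that convergence of $(f_n)$ to $f$ in $(B(X,Y),s)$ implies $f_n(z)\to f(z)$ in $Y$ for every $z\in X$. The plan is to unwind the definitions and compare the supremum with the value at the single point $z$. Since $(Y,d)$ is convergence-symmetric, Proposition \ref{uniform symmetric} makes $s$ convergence-symmetric. In particular $s$ satisfies Busemann's axiom, so by Lemma \ref{Busemann same top} we may speak of convergence in $B(X,Y)$ without specifying a direction. The hypothesis then gives both $s(f_n,f)\to 0$ and $s(f,f_n)\to 0$.

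Fix $z\in X$. By the definition of the supremum metric, the value at $z$ is one of the numbers over which the supremum is taken, so
$$0\le d(f_n(z),f(z))\le s(f_n,f)\quad\text{and}\quad 0\le d(f(z),f_n(z))\le s(f,f_n).$$
Letting $n\to\infty$ and squeezing, we get $d(f_n(z),f(z))\to 0$ and $d(f(z),f_n(z))\to 0$. Thus $f_n(z)$ converges to $f(z)$ both forward and backward.

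Finally, $Y$ is convergence-symmetric and hence satisfies Busemann's axiom. By Lemma \ref{Busemann same top} and Proposition \ref{prop:converge}, this is exactly convergence of $f_n(z)$ to $f(z)$ in $Y$. Even using only one of the two inequalities above would suffice, since Busemann's axiom in $Y$ supplies the other direction.

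There is no real obstacle. The only point needing care is to make sure that ``converges'' has an unambiguous meaning on both sides. On $B(X,Y)$ this comes from Proposition \ref{uniform symmetric}, and on $Y$ it comes from the convergence-symmetry hypothesis on $Y$.
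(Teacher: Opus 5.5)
Your proof is correct and follows the paper's approach. The paper simply calls the statement an immediate consequence of the definition of $s$, which is your pointwise bound $d(f_n(z),f(z))\le s(f_n,f)$, with Busemann's axiom on $Y$ supplying the other direction.
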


Recall that given a metric space $(X,d)$ satisfying Busemann's axiom, we defined a topology on $X$, and denoted it by $\mathcal{T}_d$ (see \S \ref{section:basic}). 
Therefore we can talk about the continuity of functions between metric spaces satisfying Busemann's axiom.
The following three results are immediate from Definition \ref{topology}.

%\begin{definition}[Continuity]
% If $(X,d)$  and $(Y,e)$ are two metric spaces satisfying Busemann's axiom and 
%$f\colon X\to Y$ is a function, we say that $f$ is continuous at $x\in X$ if $f$ is continuous at $x$ with respect to the topologies $\mathcal{T}_d$  and $\mathcal{T}_e$ . We say that $f$ is continuous if it is continuous at every point of $X$. \end{definition}
%%If $Z$
%is any topological space, when we mention the continuity of a function $X\to Z$ or $Z\to X$, the topology $\mathcal{T}_d$ should be recalled.
%
\begin{proposition}
	\label{prop:seq-cont}
    Let $(X,d)$ and $(Y,e)$ be metric spaces satisfying Busemann's axiom and $f \colon X\to Y$ a function. Then $f$ is continuous at $x$ if and only if for each sequence $(x_n)$ in $X$ with $\lim_{n\to \infty} x_n =x$, we have $\lim_{n\to \infty} f(x_n)=f(x)$.
%    \begin{proof}
%    Assume that $f$ is continuous at $x$. Consider $d_{\mathrm{arith}}$ and $e_{\mathrm{arith}}$. Let $\lim x_n=x$. It follows that $x_n$ converges to $x$ with respect to $d_{\mathrm{arith}}$. Therefore $f(x_n)$ converges to $f(x)$ with respect to $e_{\mathrm{arith}}$. It follows that $f(x_n)$ converges to $x$ with respect to $e$.  Conversely assume that $f$ is not continuous at $x$. Then there is sequence $(x_n)$ that converges to $x$ with respect to $d_{\mathrm{arith}}$ and $f(x_n)$ does not converge to $f(x)$ with respect to $e_{\mathrm{arith}}$. It follow that $(x_n)$ converges to $x$ with respect to $d$ but $(f(x_n))$ does not converge to $f(x)$ with respect to $e$. 
%    \end{proof}
\end{proposition}

\begin{corollary}
    Let $(X,d)$ and $(Y,e)$ be two metric spaces satisfying Busemann's axiom and $f\colon X\to Y$ a function. Then $f$ is continuous if and only if for all $x\in X$ and for all $(x_n)$ with $\lim_{n\to \infty} x_n =x$, $\lim_{n\to\infty} f(x_n)$ exists and is equal to $f(x)$.
    \end{corollary}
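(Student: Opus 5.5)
The plan is to reduce the statement to Proposition \ref{prop:seq-cont} applied pointwise. The corollary has two directions, and both follow once we recall that, for metric spaces satisfying Busemann's axiom, ``$\lim_{n\to\infty} x_n = x$'' is unambiguous. By Lemma \ref{Busemann same top} and Proposition \ref{prop:converge}, forward convergence, backward convergence, and convergence with respect to $d_{\mathrm{arith}}$ or $d_{\max}$ all coincide, and the limit is unique.

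For the forward direction, assume $f$ is continuous, i.e.\ continuous at every point $x\in X$ with respect to the topologies $\mathcal{T}_d$ and $\mathcal{T}_e$. Fix $x$ and a sequence $(x_n)$ with $\lim x_n = x$. Proposition \ref{prop:seq-cont} gives $\lim f(x_n) = f(x)$. In particular, the limit of $(f(x_n))$ exists. It is unique because $\mathcal{T}_e$ coincides with the topology of the symmetric metric $e_{\mathrm{arith}}$ (Corollary \ref{same topology}), which is Hausdorff. So the phrase ``exists and is equal to $f(x)$'' is meaningful.

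For the converse, assume that for every $x$ and every sequence $x_n\to x$ the limit $\lim f(x_n)$ exists and equals $f(x)$. Then for each fixed $x$ the sequential hypothesis of Proposition \ref{prop:seq-cont} holds, so $f$ is continuous at $x$. Since $x$ was arbitrary, $f$ is continuous on $X$.

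No step is a genuine obstacle. The only point needing care is that Proposition \ref{prop:seq-cont}, whose proof is only sketched in the paper, rests on the fact that $\mathcal{T}_d$ is the metrizable topology of $d_{\mathrm{arith}}$. Sequential characterizations of continuity hold in first-countable spaces, and Corollary \ref{same topology} provides exactly this. If needed, I would justify Proposition \ref{prop:seq-cont} by passing to $d_{\mathrm{arith}}$ and $e_{\mathrm{arith}}$ and invoking the classical symmetric result.
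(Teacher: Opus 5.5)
Your proposal is correct and matches the paper's (unwritten, ``immediate'') argument: the corollary is Proposition \ref{prop:seq-cont} applied at every point $x\in X$. The uniqueness of limits and the first-countability you invoke both come from the fact that $\mathcal{T}_d$ and $\mathcal{T}_e$ are the metric topologies of $d_{\mathrm{arith}}$ and $e_{\mathrm{arith}}$.
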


\begin{corollary}
	\label{cor:metric-cont}
	Let $(X,d)$ be a metric space satisfying Busemann's axiom. Consider the product space $X\times X$ and the metric $\mu_1$ on $X\times X$ (See Definition \ref{rem:product}). Then the function $d: X\times X \to \R$ is continuous.
%	\begin{proof}
%		This follows immediately from Proposition \ref{prop:seq-cont}.
%	\end{proof} 
\end{corollary}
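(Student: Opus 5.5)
The plan is to use the sequential criterion of Proposition \ref{prop:seq-cont}. By the remark following Definition \ref{rem:product}, $\mu_1$ satisfies Busemann's axiom whenever $d$ does, so $(X\times X,\mu_1)$ is a legitimate source space for that proposition. For the target I take $\R$ with its usual symmetric metric, which trivially satisfies Busemann's axiom. It then suffices to show: if $(x_n,y_n)\to(x,y)$ in $(X\times X,\mu_1)$, then $d(x_n,y_n)\to d(x,y)$ in $\R$.

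The first step is to unpack the hypothesis. By the proposition on convergence in products (convergence in $P$ is equivalent to convergence of each projection), we get $x_n\to x$ and $y_n\to y$ in $(X,d)$. By Busemann's axiom, and equivalently Lemma \ref{Busemann same top}, each of these convergences holds in both directions. Thus all four quantities
$$d(x_n,x),\quad d(x,x_n),\quad d(y_n,y),\quad d(y,y_n)$$
tend to $0$.

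The second step is a two-sided estimate from the triangle inequality, taking care with the order of the arguments since $d$ is asymmetric. For the upper bound,
$$d(x_n,y_n)\le d(x_n,x)+d(x,y)+d(y,y_n).$$
For the lower bound,
$$d(x,y)\le d(x,x_n)+d(x_n,y_n)+d(y_n,y).$$
Together these give
$$-\bigl(d(x,x_n)+d(y_n,y)\bigr)\le d(x_n,y_n)-d(x,y)\le d(x_n,x)+d(y,y_n).$$
Both bounding sums tend to $0$, so $d(x_n,y_n)\to d(x,y)$.

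The only real obstacle is that asymmetry forces the upper and lower bounds to use oppositely oriented distances. The upper bound needs $d(x_n,x)$ and $d(y,y_n)$, while the lower bound needs $d(x,x_n)$ and $d(y_n,y)$. Without Busemann's axiom only one orientation would be available and the argument would fail. This is exactly where Lemma \ref{Busemann same top} is used, and once it is invoked the rest is routine.
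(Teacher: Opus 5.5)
Your proof is correct and follows the paper's own route: the paper derives this corollary directly from the sequential criterion of Proposition \ref{prop:seq-cont}. Your two-sided triangle-inequality estimate, which uses Busemann's axiom to obtain convergence in both orientations, is exactly the detail the paper leaves implicit when it calls the result immediate.
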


\begin{definition}[Compactness for a space satisfying Busemann's axiom]
Let $(X,d)$ be a metric space satisfying Busemann's axiom. We say that $X$ is compact if it is compact as a topological space with respect to the topology of $\mathcal{T}_d$ (see Corollary \ref{same topology}).
\end{definition}

Now we turn to compactness.

\begin{proposition}
\label{prop:compactness}
Let $(X,d)$ be a metric space satisfying Busemann's axiom.
    The following four conditions are equivalent.
\begin{enumerate}
    \item 
    $(X,d)$ is compact.
    \item 
    $(X,d_{\mathrm{arith}})$ is compact.
    \item 
    $(X,d_{\mathrm{max}})$ is compact.
    \item 
Every sequence in $X$ has a convergent subsequence with respect to the metric $d$.
\end{enumerate}
\end{proposition}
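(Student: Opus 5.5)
The plan is to reduce everything to classical facts about the symmetric metric spaces $(X,d_{\mathrm{arith}})$ and $(X,d_{\mathrm{max}})$, using Corollary \ref{same topology} and Proposition \ref{prop:converge}.

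First, (1), (2) and (3) are equivalent because compactness is purely topological. By Corollary \ref{same topology}, the three topologies $\mathcal{T}_d$, the topology of $d_{\mathrm{arith}}$ and the topology of $d_{\mathrm{max}}$ coincide, since $d$ satisfies Busemann's axiom. Hence $X$ is compact for one of them if and only if it is compact for all of them. (Independently, $(2)\Leftrightarrow(3)$ also follows from the inequality $d_{\mathrm{max}}/2\leq d_{\mathrm{arith}}\leq d_{\mathrm{max}}$.)

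Second, I link (4) to (2). Since $d_{\mathrm{arith}}$ is a genuine symmetric metric, the classical theorem applies: $(X,d_{\mathrm{arith}})$ is compact if and only if it is sequentially compact. So it suffices to show that a subsequence of a sequence converges with respect to $d$ if and only if it converges with respect to $d_{\mathrm{arith}}$. This is exactly Proposition \ref{prop:converge}, which identifies convergence in $(X,d)$ with convergence in $(X,d_{\mathrm{arith}})$ to the same limit. One should be explicit that "convergent with respect to $d$" in (4) means convergent in the sense of Lemma \ref{Busemann same top}, that is, forward (equivalently backward) convergent to some point of $X$. With that reading, a sequence in $X$ has a $d$-convergent subsequence if and only if it has a $d_{\mathrm{arith}}$-convergent subsequence, which gives $(4)\Leftrightarrow(2)$.

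There is no real obstacle here. The only point needing care is making sure that the notion of convergence in (4) is the one for which Proposition \ref{prop:converge} applies. Forward and backward convergence agree by Lemma \ref{Busemann same top}, so no ambiguity arises, and the rest is the standard equivalence of compactness and sequential compactness for symmetric metric spaces.
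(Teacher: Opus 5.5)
Your proposal is correct and matches the paper's proof. The paper also gets (1)$\Leftrightarrow$(2)$\Leftrightarrow$(3) from the coincidence of topologies in Corollary \ref{same topology}, and (4)$\Leftrightarrow$(2) from the fact that $d$-convergent and $d_{\mathrm{arith}}$-convergent subsequences are the same, combined with the classical equivalence of compactness and sequential compactness for the symmetric metric $d_{\mathrm{arith}}$.
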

    \begin{proof}
    The equivalence of the first three conditions follows from the fact that the topology $\mathcal{T}_d$ coincides with the topology induced by the metric $d_{\mathrm{arith}}$ or $d_{\mathrm{max}}$, as shown in Corollary \ref{same topology}. A sequence in $X$ has a convergent subsequence with respect to $d$ if and only if it has a convergent subsequence with respect to $d_{\mathrm{arith}}$. This observation implies that the last condition is equivalent to the second one. This completes the proof of the proposition.
    \end{proof}

\begin{lemma}[Lebesgue number lemma]
    Let $(X,d)$ be a compact metric space satisfying Busemann's axiom. Let $\frak{A}$ be an open covering of $X$. Then there is a number $\delta>0$ such that for each subset of $X$ having diameter less than $\delta$, there exists an element of $\frak{A}$ containing it.

    \begin{proof} (See Definition \ref{def:diameter} for the diameter of a set.)
        Consider the metric $d_{\mathrm{max}}$. Since $(X,d_{\mathrm{max}})$ is compact, the classical Lebesgue Number Lemma implies that there exists $\delta$ such that if $diam_{d_{\mathrm{max}}}(A)<\delta$, then 
        $A$ is contained in an element of $\frak{A}$. Now the result follows from the observation that $diam_{d}(A)=diam_{d_{\mathrm{max}}}(A)$.
    \end{proof}

\end{lemma}

The following definition, which is classical in the setting symmetric metric spaces, will be also useful in our more general setting
\begin{definition} [Uniform continuity]
Let $(X,d_1)$ and $(Y,d_2)$ be metric spaces. A function $f: (X,d_1)\to (Y,d_2)$ is said to be uniformly continuous if for all $\epsilon >0$ , there exists $\delta >0$ such that $d_1(x,y)<\delta$ implies $d_2(f(x),f(y))<\epsilon$.
\end{definition}

It is not difficult to see that $f$ is uniformly continuous if and only if for any sequences $(x_n)$ and $(y_n)$ in $X$, $d_1(x_n,y_n)\to0$ implies that $d_2(f(x_n),f(y_n))\to 0$ as $n\to \infty$. Using this remark, we have the following.

\begin{lemma}
If $f: (X,d_1)\to (Y,d_2)$ is uniformly continuous, then $f: (X,d_{1,\mathrm{arith}})\to (Y,d_{2,\mathrm{arith}})$ is also uniformly continuous.
\begin{proof}
Assume that $d_{1,\mathrm{arith}}(x_n,y_n)\to 0$. Then,
$$d_1(x_n,y_n)\to 0 \ \text{and}\ d_{1}(y_n,x_n)\to 0.$$
Thus, by assumption, we have,
$$d_2(f(x_n),f(y_n))\to 0 \ \text{and}\ d_2(f(y_n),f(x_n))\to 0.$$

\noindent This clearly implies that $d_{2,\mathrm{arith}}(f(x_n),f(y_n))\to 0$.  
\end{proof}
\end{lemma}
%
%\begin{remark}
%Let $(X,d_1)$ and $(Y,d_2)$ be metric spaces satisfying Busemann's axiom. If $f: (X,d_1)\to (Y,d_2)$ is uniformly continuous, then  we showed that $f: (X,d_{1,\mathrm{arith}})\to (Y,d_{2,\mathrm{arith}})$ is uniformly continuous, hence $f$
%is continuous with respect to the two metrics $d_{1,\mathrm{arith}}$ and $d_{2,\mathrm{arith}}$. It follows that $f: (X,d_1)\to (Y,d_2)$ is continuous. 
%\end{remark}

\begin{lemma}
\label{lemma:uniform-continuity}
Let $(X,d_1)$ be a convergence-symmetric metric space and let $(Y,d_2)$ be an arbitrary metric space. If $f: (X,d_{1,\mathrm{arith}})\to (Y,d_{2,\mathrm{arith}})$ is uniformly continuous, then $f: (X,d_1)\to (Y,d_2)$ is uniformly continuous.
\begin{proof}
If $d_1(x_n,y_n)\to 0$, then $d_1(y_n,x_n)\to 0$ as well. It follows that $d_{1,\mathrm{arith}}(x_n,y_n)\to 0$. By assumption, $d_{2,\mathrm{arith}}(f(x_n),f(y_n))\to 0$. Finally, this implies that $d_2(f(x_n),f(y_n))\to 0$.
\end{proof} 
\end{lemma}

\begin{theorem}
\label{thm:uniform-continuity}
Let $(X,d_1)$ be a compact convergence-symmetric metric space and let $(Y,d_2)$ be a metric space satisfying Busemann's axiom. If $f: (X,d_1)\to (Y,d_2)$ is continuous, then $f$ is uniformly continuous with respect to the two metrics 
$d_1$ and $d_2$.
\begin{proof}
We start by observing that $f:(X,d_{1,\mathrm{arith}})\to (Y,d_{2,\mathrm{arith}})$ is continuous. Since $(X,d_{1,\mathrm{arith}})$ is compact, this implies that $f$ is uniformly continuous with respect to the two metrics $d_{1,\mathrm{arith}}$  and $d_{2,\mathrm{arith}}$.  Then Lemma 
\ref{lemma:uniform-continuity} directly implies the desired result.
\end{proof}

\end{theorem}

\begin{definition}[Standard associated 
bounded metric]
Let $(X,d)$ be a metric space. We call the metric   $\bar{d}(x,y)=\min\{d(x,y),1\}$ on $X$ the standard 
bounded metric associated 
with $d$. 
\end{definition} 
Observe that the standard associated bounded metric is a metric. The metric space $(X,d)$ is convergence-symmetric if and only if $(X,\bar{d})$ is convergence-symmetric. Also, under the assumption of convergence-symmetry, $(X,\bar{d})$ is complete if and only if $(X,d)$ is complete.

\begin{definition}
    Let $(Y,d)$ be a metric space and $J$  an index set. Consider the standard associated bounded metric $\bar{d}$ on $Y$. If $x=(x_{\alpha})$ and $y=(y_{\alpha})$ are points in $Y^{J}$, then we define 
    $$\bar{\rho}(x,y)=\sup\{\bar{d}(x_{\alpha},y_{\alpha}): \alpha \in J\}.$$
It is easy to check that $\bar{\rho}$ is a metric on $Y^J$. It is 
called the uniform metric on $Y^J$.   
\end{definition}

\begin{proposition}
Let $(Y,d)$ be a convergence-symmetric metric space. If the space $Y$ is complete with respect to $d$, then the space $Y^J$ is complete with respect to $\bar{\rho}$.
\end{proposition}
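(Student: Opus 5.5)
We first check that $(Y^J,\bar{\rho})$ is itself convergence-symmetric, so that ``complete'' has the meaning of the Definition above and Lemma \ref{Cauchy} applies. The space $(Y,\bar{d})$ is convergence-symmetric, as observed after the definition of the standard associated bounded metric. Every element of $Y^J$ is a bounded map $J\to (Y,\bar d)$, since $\bar d\le 1$. So $Y^J$ is exactly $B(J,(Y,\bar d))$, and $\bar\rho$ is the supremum metric $s$. Proposition \ref{uniform symmetric} then shows that $\bar{\rho}$ is a convergence-symmetric metric. Also, $(Y,\bar d)$ is complete because $(Y,d)$ is, again by the remark following the definition of $\bar d$.

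Next, let $(f_n)$ be a Cauchy sequence in $(Y^J,\bar\rho)$, with $f_n=(f_n(\alpha))_{\alpha\in J}$. For every $\alpha$ we have $\bar d(f_n(\alpha),f_m(\alpha))\le\bar\rho(f_n,f_m)$. Hence each coordinate sequence $(f_n(\alpha))_n$ is forward Cauchy in $(Y,\bar d)$, with the same $N(\epsilon)$ for all $\alpha$. By Lemma \ref{Cauchy} it is Cauchy, so it has a limit $f(\alpha)$ with $\bar d(f_n(\alpha),f(\alpha))\to0$. By Busemann's axiom, which follows from convergence-symmetry by taking constant sequences, we also have $\bar d(f(\alpha),f_n(\alpha))\to 0$. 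Set $f=(f(\alpha))\in Y^J$.

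Finally we show $\bar\rho(f_n,f)\to0$. Fix $\epsilon>0$ and take $N$ with $\bar\rho(f_n,f_m)<\epsilon$ for $m\ge n\ge N$. For each $\alpha$ and each $m\ge n\ge N$, the triangle inequality gives
\[
\bar d(f_n(\alpha),f(\alpha))\le \bar d(f_n(\alpha),f_m(\alpha))+\bar d(f_m(\alpha),f(\alpha))<\epsilon+\bar d(f_m(\alpha),f(\alpha)).
\]
Letting $m\to\infty$ gives $\bar d(f_n(\alpha),f(\alpha))\le\epsilon$. This bound is uniform in $\alpha$, so $\bar\rho(f_n,f)\le\epsilon$ for $n\ge N$. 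That is forward convergence. Backward convergence $\bar\rho(f,f_n)\to 0$ then follows from convergence-symmetry of $\bar\rho$, applied to $f_n$ and the constant sequence $f$.

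The main point requiring care is the ordering of $m$ and $n$ in the forward Cauchy condition, which is only asserted for $m\ge n$ in one direction. Passing to the limit in the correct slot handles this, and the convergence-symmetry of $\bar\rho$ takes care of the other direction, so no step appears to be a genuine obstacle.
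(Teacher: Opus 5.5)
Your proof is correct and follows essentially the same route as the paper's: take the coordinatewise limits $f(\alpha)$ in $(Y,\bar d)$, then pass to the limit in the uniform Cauchy estimate to obtain $\bar{\rho}(f_n,f)\le\epsilon$ for all $n\ge N$. The difference is that you make explicit two points the paper leaves implicit: that $\bar\rho$ is convergence-symmetric (by Proposition~\ref{uniform symmetric} applied to $(Y,\bar d)$), so one direction of convergence suffices, and that the limit must be taken in the correct slot; as written, the paper's limiting step actually yields $\bar d(f(\alpha),f_m(\alpha))\le\epsilon/2$, which is harmless for the same reason.
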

\begin{proof}
Let $(f_n)$ be a Cauchy sequence in $Y^J$. 
It follows that for every $\alpha \in J$, $(f_n(\alpha))$ is a Cauchy sequence in $Y$ with respect to $\bar{d}$. 
Hence it converges to a point $y_{\alpha}\in Y$.
Let $f\colon J\to Y$ be the function defined by $f(\alpha)=y_{\alpha}$. 
We claim that $(f_n)$ converges to $f$ with respect to the metric $\bar{\rho}$.

Give $\epsilon >0$, choose $N$ large enough so that $n\geq m \geq N$ implies  $\bar{\rho}(f_n,f_m)<\epsilon /2$. Then in particular 
$\bar{d}(f_n(\alpha),f_m(\alpha))<\epsilon/2$
 for all $n\geq m\geq N$ and for all $\alpha \in J$.
Since $(f_n(\alpha))$ converges to $f(\alpha)$, making $n\to \infty$, we see that 
$\bar{d}(f_m(\alpha),f(\alpha))\leq \epsilon/2$.
This inequality holds for all $m\geq N$ and for all $\alpha \in J$. Thus the claim is proved.
\end{proof}

 \begin{definition}[uniform convergence in spaces having the convergence-symmetry property]
Let $X$ be a set and $(Y,d)$ a metric space having the convergence-symmetry property. Let $(f_n)$ be a sequence of functions from $X$ to $Y$ and $f: X\to Y$ a function. Then, by Proposition \ref{uniform symmetric}, the following two conditions are equivalent.
\begin{enumerate}
    \item 
For every $\epsilon >0$, there exists a positive integer $N$ such that $d(f_n(x),f(x))< \epsilon$ for all $n\geq N$ and for all $x\in X$.
    \item 
    For every $\epsilon >0$, there exists a positive integer $M$ such that 
    $d(f(x),f_n(x))< \epsilon$ for all $n\geq M$ and for all $x\in X$.
    \end{enumerate}
If one condition holds (and hence, if both hold), then we say that $(f_n)$ converges uniformly to $f$. 
\end{definition}

\begin{remark}
If $(f_n)$ converges to $f$ uniformly with respect to the metric $d$, then $(f_n)$  also converges to $f$ uniformly with respect to the metrics $d_{\mathrm{arith}}$ and $d_{\mathrm{max}}$. Conversely, if $(f_n)$ converges uniformly to $f$ with respect to the metric $d_{\mathrm{arith}}$ (or $d_{\mathrm{max}}$), then $f_n$ converges to $f$ uniformly with respect to the metric $d$.
\end{remark}

\begin{proposition}
	\label{prop:uniform-convergence}
    Let $(f_n)$ be a sequence of continuous functions from a topological space $X$  to a metric space $Y$ having the convergence-symmetry property. If $(f_n)$ converges to $f$ uniformly, then $f$ is continuous.
    \end{proposition}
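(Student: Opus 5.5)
The plan is to reduce the statement to the classical uniform limit theorem for the symmetric metric $d_{\mathrm{arith}}$ on $Y$. The reduction rests on two facts already established: convergence-symmetry implies Busemann's axiom, and the topology $\mathcal{T}_d$ coincides with the topology of $d_{\mathrm{arith}}$ (Corollary \ref{same topology}).

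First, we note that $Y$ satisfies Busemann's axiom. Given $x$ and a sequence $(y_n)$, apply the convergence-symmetry property to the constant sequence $p_n=x$ and to $q_n=y_n$; this gives $d(x,y_n)\to 0$ if and only if $d(y_n,x)\to 0$. So $\mathcal{T}_d$ is well defined and equals the topology of $d_{\mathrm{arith}}$. Continuity of each $f_n$ and of $f$ can therefore be read with respect to $d_{\mathrm{arith}}$.

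Next, by the remark following the definition of uniform convergence, uniform convergence of $(f_n)$ to $f$ with respect to $d$ implies uniform convergence with respect to $d_{\mathrm{arith}}$. Concretely, for $\epsilon>0$ both conditions (1) and (2) of that definition hold from some index on, so $d_{\mathrm{arith}}(f_n(x),f(x))<\epsilon$ for all $x$ once $n$ is large.

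Finally, we run the usual $\epsilon/3$ argument in $(Y,d_{\mathrm{arith}})$. Fix $x_0\in X$ and $\epsilon>0$, and choose $N$ with $\sup_x d_{\mathrm{arith}}(f_N(x),f(x))<\epsilon/3$. Continuity of $f_N$ gives a neighbourhood $U$ of $x_0$ with $d_{\mathrm{arith}}(f_N(x),f_N(x_0))<\epsilon/3$ on $U$. The triangle inequality for $d_{\mathrm{arith}}$ then gives $d_{\mathrm{arith}}(f(x),f(x_0))<\epsilon$ on $U$. Hence $f^{-1}$ of every $d_{\mathrm{arith}}$-ball around $f(x_0)$ contains a neighbourhood of $x_0$, so $f$ is continuous into $(Y,\mathcal{T}_d)$.

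Since $X$ is an arbitrary topological space, we argue with neighbourhoods rather than with sequences. The only point needing care is the identification of $\mathcal{T}_d$ with the $d_{\mathrm{arith}}$-topology. The paper establishes this via sequences (Proposition \ref{prop:converge}), which suffices because both topologies are sequential: the $d_{\mathrm{arith}}$-topology is metric, and $\mathcal{T}_d$ is defined through sequences.

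Alternatively, one can avoid symmetrisation entirely by writing
\[
d(f(x),f(x_0))\leq d(f(x),f_N(x))+d(f_N(x),f_N(x_0))+d(f_N(x_0),f(x_0)),
\]
using condition (2) for the first term and condition (1) for the last. This requires a description of $\mathcal{T}_d$-neighbourhoods by forward $d$-balls, which again rests on Busemann's axiom, so I would present the $d_{\mathrm{arith}}$ reduction.
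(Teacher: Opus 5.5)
Your proposal is correct and follows the paper's proof: replace $d$ by the symmetric metric $d_{\mathrm{arith}}$, using the remark that uniform convergence for $d$ gives uniform convergence for $d_{\mathrm{arith}}$, and then apply the classical uniform limit theorem. You also make explicit two points the paper leaves implicit: convergence-symmetry yields Busemann's axiom, so that $\mathcal{T}_d$ is the $d_{\mathrm{arith}}$-topology, and you write out the $\epsilon/3$ argument itself.
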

    \begin{proof}
        By the  remark above, one can replace $d$ with  $d_{\mathrm{arith}}$, which is a symmetric metric. Then the result follows from the uniform limit theorem.
    \end{proof}

\begin{remark}
    A closed subset $A$ of a complete convergence-symmmetric  metric space $(X,d)$ is also complete. Indeed, by Corollary \ref{same topology}, $A$ is closed with respect to the metric $d_{\mathrm{arith}}$ by. Therefore, it is complete with respect to the restriction of $d_{\mathrm{arith}}$. Hence,  by Theorem \ref{thm-crucial},  it is complete with respect to $d$. 
\end{remark}
\begin{proposition}
    Let $X$ be a topological space and  $(Y,d)$ a metric space with the convergence-symmetry property. The set of continuous functions $\mathcal{C}(X,Y)$ is closed in $Y^X$ under the uniform metric, and so is the set $B(X,Y)$ of bounded functions. Therefore, if $Y$ is complete and convergence-symmetric, then these spaces are complete with respect to the uniform metric.
\end{proposition}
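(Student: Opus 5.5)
We reduce to the classical symmetric statement by passing to a symmetrisation. Let $(Y,d)$ be convergence-symmetric; then $(Y,\bar d)$ is convergence-symmetric as well, as observed after the definition of the standard associated bounded metric. Put $\bar\rho_{\mathrm{arith}}(f,g)=\frac12(\bar\rho(f,g)+\bar\rho(g,f))$. By Proposition \ref{uniform symmetric} applied to $\bar d$ (every map into $(Y,\bar d)$ is bounded), $\bar\rho$ is convergence-symmetric on $Y^X$. Consequently a sequence $(f_n)$ converges to $f$ in $\bar\rho$ if and only if $\bar\rho(f_n,f)\to0$ and $\bar\rho(f,f_n)\to 0$, that is, if and only if $\bar\rho_{\mathrm{arith}}(f_n,f)\to0$. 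So the topology of $\bar\rho$ on $Y^X$ is the (sequential, hence metric) topology of the symmetric metric $\bar\rho_{\mathrm{arith}}$. It therefore suffices to show that $\mathcal C(X,Y)$ and $B(X,Y)$ are sequentially closed: if $f_n$ lies in the subset and $f_n\to f$ uniformly, then $f$ lies in the subset.

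For $\mathcal C(X,Y)$, suppose $\bar\rho(f_n,f)\to0$. Since $\bar d$ agrees with $d$ on distances below $1$, this is uniform convergence with respect to $d$ in the sense of the preceding definition. Proposition \ref{prop:uniform-convergence} then gives that $f$ is continuous. (Continuity here is with respect to $\mathcal T_d$, which coincides with $\mathcal T_{\bar d}$ because the two metrics have the same convergent sequences.)

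For $B(X,Y)$, take $n$ with $\bar\rho(f_n,f)<1$ and $\bar\rho(f,f_n)<1$; such $n$ exists by convergence-symmetry. Then $d(f(x),f_n(x))<1$ and $d(f_n(x),f(x))<1$ for all $x$. The triangle inequality gives
\[
d(f(x),f(y))\le d(f(x),f_n(x))+d(f_n(x),f_n(y))+d(f_n(y),f(y)) < 2+\mathrm{diam}(f_n(X)),
\]
so $f$ is bounded. The point to watch is that both directions $d(f(x),f_n(x))$ and $d(f_n(y),f(y))$ appear in this estimate, which is exactly why convergence-symmetry is needed.

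For completeness, the previous proposition shows that $Y^X$ is complete under $\bar\rho$ when $Y$ is complete. Let $(f_n)$ be $\bar\rho$-Cauchy in $\mathcal C(X,Y)$ or $B(X,Y)$. It converges in $Y^X$, and by closedness the limit lies in the subset. Cauchy-ness in the subspace is the same as in $Y^X$, since the metric is just restricted.

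The main obstacle I anticipate is bookkeeping rather than substance. One must check that closedness in the topology defined in \S\ref{section:basic} coincides with sequential closedness. This holds because $\bar\rho$ satisfies Busemann's axiom, so its topology is that of the symmetric metric $\bar\rho_{\mathrm{arith}}$ by Corollary \ref{same topology}, and metric topologies are sequential.
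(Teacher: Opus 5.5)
Your proof is correct and follows essentially the paper's route: closedness of $\mathcal{C}(X,Y)$ comes from Proposition \ref{prop:uniform-convergence}, and everything else is reduced to the symmetric setting via the convergence-symmetry of $\bar\rho$ and its symmetrisation. The only difference is that you check closedness of $B(X,Y)$ by a direct triangle-inequality estimate and obtain completeness from the preceding proposition on $Y^J$, whereas the paper simply transfers the classical symmetric result through the arithmetic or max symmetrisation of the uniform metric.
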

\begin{proof}
The first part of the proposition is just a restatement of Proposition \ref{prop:uniform-convergence}. The rest of the proposition can be proved by replacing the uniform metric with its arithmetic or max symmetrisation.
\end{proof}

We end this section by an analogue of the Banach fixed point theorem for complete convergence-symmetric  metric space.

\begin{definition}
	Let \((X,d)\) be a metric space. A map \(T:X\to X\) is said to be a contraction
	if there exists a real number \(q\in[0,1)\) such that for all \(x,y\in X\),
	\[
	d(T(x),T(y))\leq qd(x,y).
	\]
\end{definition}

Observe that if $T$ is a contraction with respect to a metric $d$, then it is also a contraction with respect to the associated metric $d_{\mathrm{arith}}$. In particular, this implies that $T$ is continuous.

\begin{theorem}
If $(X,d)$ is a complete convergence-symmetric  metric space and $T\colon X\to X$ a contraction, then $T$ has a unique fixed point $x^*$. Furthermore, $x^*$ can be found as follows: start with any point $x_0 \in X$, and for $n\geq 1$ set $x_n=T(x_{n-1})$. Then $\lim_{n \to \infty} x_n=x^*$.
\end{theorem}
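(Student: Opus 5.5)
We prove the theorem directly, mimicking the classical proof: we show the Picard iterates form a forward Cauchy sequence, and use Lemma \ref{Cauchy} to pass to a genuine Cauchy sequence in the convergence-symmetric sense. Fix $x_0\in X$ and set $x_n=T(x_{n-1})$.

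\emph{Cauchy property.} By induction, $d(x_n,x_{n+1})\le q^n d(x_0,x_1)$. For $k\ge 0$ the triangle inequality applied along the chain $x_n,x_{n+1},\dots,x_{n+k}$ gives
\[
d(x_n,x_{n+k})\le \sum_{j=n}^{n+k-1} q^j d(x_0,x_1)\le \frac{q^n}{1-q}\,d(x_0,x_1),
\]
which tends to $0$ as $n\to\infty$ uniformly in $k$. Hence $(x_n)$ is forward Cauchy in the sense of Definition \ref{d:symmetry}. By Lemma \ref{Cauchy} it is also backward Cauchy, so it is Cauchy. By completeness there is $x^*$ with $d(x_n,x^*)\to 0$. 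By Busemann's axiom (implied by convergence-symmetry with one sequence constant) we also have $d(x^*,x_n)\to 0$, so $x_n\to x^*$ in $\mathcal T_d$.

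\emph{Fixed point.} We estimate
\[
d(x^*,T(x^*))\le d(x^*,x_{n+1})+d(T(x_n),T(x^*))\le d(x^*,x_{n+1})+q\,d(x_n,x^*).
\]
Both terms tend to $0$. Here we need both directions of convergence, which is exactly where Busemann's axiom enters. Hence $d(x^*,T(x^*))=0$, so $T(x^*)=x^*$ by the first metric axiom. Alternatively, one can note that $T$ is continuous by the remark preceding the theorem and pass to the limit using Proposition \ref{prop:seq-cont}.

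\emph{Uniqueness.} If $y^*$ is another fixed point, then $d(x^*,y^*)=d(T(x^*),T(y^*))\le q\,d(x^*,y^*)$. Since $q<1$, this forces $d(x^*,y^*)=0$, so $x^*=y^*$. Because uniqueness holds, the limit of the iterates is this $x^*$ for every starting point $x_0$.

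The only genuinely asymmetric step is converting forward Cauchy into Cauchy, which Lemma \ref{Cauchy} handles. Completeness is defined in terms of Cauchy sequences, which are forward and backward simultaneously, so without convergence-symmetry the Funk example shows that forward-completeness alone would not provide the limit in the needed sense. I expect no real obstacle beyond keeping track of the order of arguments in $d$ in the fixed-point estimate.
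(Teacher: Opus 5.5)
Your proof is correct, but it takes a different route from the paper. The paper does not redo the classical argument. It notes that a $q$-contraction for $d$ is a $q$-contraction for $d_{\mathrm{arith}}$, and uses Theorem~\ref{thm-crucial} to know that $(X,d_{\mathrm{arith}})$ is complete. It then applies the classical Banach theorem there and transfers $d_{\mathrm{arith}}(x_n,x^*)\to 0$ back to $d$. That reduction is shorter, but it depends on Theorem~\ref{thm-crucial}. Your argument is self-contained and shows exactly where asymmetry enters: the term $d(x^*,x_{n+1})$ in the fixed-point estimate, which you handle with the Busemann consequence of convergence-symmetry. You do not even need Lemma~\ref{Cauchy}, since the contraction inequality in the other order gives $d(x_{n+k},x_n)\le \frac{q^n}{1-q}\,d(x_1,x_0)$ directly.

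The two routes generalise differently. The paper's needs only completeness of $d_{\mathrm{arith}}$, while yours needs only forward completeness together with Busemann's axiom. This is why your closing remark is mistaken. The Funk metric is forward complete and satisfies Busemann's axiom, so your own argument proves the fixed-point theorem for contractions of $(\mathring{\Omega},\mathcal{F})$. The paper's argument does too, since the Hilbert metric is complete. What the Funk example obstructs is the existence of a backward completion, not this theorem.
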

\begin{proof}
Consider the metric $d_{\mathrm{arith}}$. Since $T$ is a contraction with respect to $d_{\mathrm{arith}}$ and since $X$
is complete with respect to $d_{\mathrm{arith}}$, the classical Banach fixed point theorem implies that 
$T$ has a unique fixed point $x^*$, and that for any point  $x_0$  in $X$, we have $d_{\mathrm{arith}}(x_n,x^*)\to 0$.  Thus $d(x_n,x^*)\to 0$ as $n\to \infty$ and the result follows.
\end{proof}

\section{Convergence-symmetric Minkowski norms on $\R^n$} \label{section:weak}

We start with recalling the notion of Minkowski norm.

\begin{definition}[Minkowski norm]
    A Minkowski norm on a real vector space $V$ is a function $\Vert \cdot \Vert: V\to \R$ with the following properties:
    \begin{enumerate}
        \item 
        $\Vert x \Vert \geq 0$
for all $x \in V$, and $\Vert x \Vert=0$ if and only if $x=0$.  
\item 

$\Vert x +y\Vert\leq \Vert x \Vert+\Vert y\Vert$ for all $x,y\in V$.
\item 
$\Vert \alpha x \Vert=\alpha \Vert x \Vert$ for all $x\in V$ and for all $\alpha\geq 0$.
        \end{enumerate}
\end{definition}
  Note that a Minkowski norm is a norm if and only if $\Vert -x \Vert=\Vert x\Vert$ for all $x \in V$.  Now let us define $d \colon V \times V \to \R$ by  $d(x,y)=\Vert y-x\Vert$. 
  Then 
 $d$ is a metric on $V$, and it is symmetric if and only if $\Vert \cdot \Vert$ is a norm. 
 This metric is convergence-symmetric if and only if for any sequence $(x_n)$ such that $\lvert\lvert x_n \rvert\rvert \to 0$ as $n\to \infty$, we have $\lvert\lvert -x_n\rvert\rvert\to 0$
  as $n\to\infty$.

\begin{example}
We give an example of a Minkowski norm whose associated metric is not convergence-symmetric.
    Let $\mathbb{N}$ be the set of natural numbers and let $V\subset \R^{\mathbb{N}}$ be the set of functions $\mathbb{N}\to \R$  such that for each $f\in V$ there exists $N>0$ with the property that 
    $f(n)=0$ for all $n>N$.
    
    For each $x\in \mathbb{R}$, we set
\begin{equation*}
\lvert\lvert  x \rvert \rvert_n =\begin{cases}
          x \quad &\text{if} \, x  \geq 0\\
          -nx \quad &\text{if} \,  x <0.\\
     \end{cases}
\end{equation*}
\noindent Then each $\lvert\lvert \cdot \rvert \rvert_n$ is a Minkowski norm on $\R$. 

Furthermore, 
$$\lvert\lvert f \rvert\rvert=\max\{\lvert\lvert f(n)\rvert \rvert_n: n\in \mathbb{N}\} $$
is a Minkowski norm on $V$. Now consider the sequence of functions $(f_n)$ such that $f_n(n)=\frac{1}{n}$ for all $n$, and $f_n(m)=0$ if $m\neq n$. Then $\lvert\lvert f_n \rvert \rvert \to 0$ as $n\to \infty$, but $\lvert \lvert -f_n\rvert\rvert =1$ for all $n$. Thus, the norm $\Vert \cdot \Vert$ is not convergence-symmetric.    
\end{example}

\begin{definition}[Convergence-symmetric Minkowski norm]
A Minkowski normed space is said to be convergence-symmetric if $\lvert\lvert v_n \rvert \rvert\to 0$ implies 
$\lvert\lvert -v_n \rvert \rvert \to 0$ as $n\to \infty$, where $v_n$ is a sequence in $V$.
\end{definition}
If $(V,\lvert\lvert\cdot\rvert\rvert)$ is a Minkowski normed space we define $d_{\lvert\lvert\cdot\rvert\rvert}(x,y)=d(x,y)=\lvert \lvert y-x\rvert\rvert$. Observe that $d_{\lvert\lvert\cdot\rvert\rvert}(x,y)$ is a (not necessarily symmetric) metric on $V$. It is symmetric if and only if $\lvert\lvert\cdot\vert\rvert$ is a norm.

\begin{proposition}
    The following are equivalent for a Minkowski normed space   $(V,\lvert\lvert\cdot\rvert\rvert)$.

\begin{enumerate}
\item 
$(V,\lvert\lvert\cdot\rvert\rvert)$ is convergence-symmetric.
\item  For every sequence \((x_n)\) in \(V\) and every \(x\in V\),
\[
d_{\|\cdot\|}(x_n,x)\to 0
\quad\text{if and only if}\quad
d_{\|\cdot\|}(x,x_n)\to 0.
\]
\item 
For every sequences $(x_n),(y_n)$ in $V$, we have $d_{\lvert\lvert\cdot\rvert\rvert}(x_n, y_n)\to 0$ if and only if $d_{\lvert\lvert\cdot\rvert\rvert}(y_n, x_n)\to 0$ as $n\to \infty$. \item 
There exists $c>0$ such that 
$$\lvert\lvert -x \rvert\rvert \leq c \rvert \rvert x \rvert \rvert $$
\noindent for all $x\in V$.
\end{enumerate}
    \begin{proof}
    	It is easy to see that (4)$\Rightarrow$ (3) $\Rightarrow$ (2) $\Rightarrow$ (1). Let us show that (1) implies (4). Assume that there exist a sequence $(c_n)$ of positive  real numbers and  a sequence $(y_n)\in V$ such that 
    	$$\lim_{n\to \infty} c_n=\infty\ \text{and}\ \lvert\lvert - y_n \rvert\rvert > c_n\lvert \lvert y_n \rvert \rvert $$
    	 for each $n$. Let us take $$x_n=\frac{y_n}{c_n\lvert\lvert y_n \rvert\rvert}.$$
    	Then we have $\lvert\lvert  x_n \rvert \rvert \to 0$, but $\lvert\lvert -x_n\lvert\lvert \geq 1$ for each $n$. 
    	\end{proof}
\end{proposition}

It follows that a Minkowski norm $\lvert\lvert \cdot\rvert\rvert$ is convergence-symmetric
if and only if the corresponding metric $d_{\lvert\lvert\cdot\rvert\rvert}$ is convergence-symmetric if and only if  $d_{\lvert\lvert\cdot\rvert\rvert}$ satisfies Busemann's axiom. 
\begin{definition}[Equivalent Minkowski norms]
Two Minkowski norms $\lvert\lvert \cdot\rvert \rvert $ and $\lvert\lvert \cdot\rvert\rvert '$ on $V$ are said to be equivalent if there exist constants $c>0$ and $C>0$ such that 
$$c\lvert\lvert x\rvert\rvert'\leq \lvert \lvert x\rvert \rvert \leq C \rvert\rvert x \rvert\rvert'$$
for all $x\in V$.
\end{definition}
Thus, if $x\mapsto \lvert\lvert x\rvert\rvert $ is a convergence-symmetric Minkowski norm, then this Minkowski norm and the Minkowski norm $$x\to \lvert\lvert -x\rvert\rvert.$$
are equivalent.

Let \((V,\|\cdot\|)\) be a Minkowski normed space. Consider the following functions
\(V\to \mathbb R\):
\[
\|v\|_a=\frac12(\|v\|+\|-v\|),\qquad
\|v\|_m=\max\{\|v\|,\|-v\|\}.
\]
It is easy to see that \(\|\cdot\|_a\) and \(\|\cdot\|_m\) are equivalent norms on \(V\).

If \(\|\cdot\|\) and \(\|\cdot\|'\) are equivalent Minkowski norms, then
their arithmetic symmetrisations \(\|\cdot\|_a\) and \(\|\cdot\|'_a\) are
equivalent norms. Conversely, for convergence-symmetric Minkowski norms,
equivalence of the arithmetic symmetrisations implies equivalence of the
original Minkowski norms.

The following result is an immediate  consequence of the above definitions. 
\begin{remark}
let  $(V,\lvert\lvert \cdot\rvert\rvert)$ be  a  real vector space with a Minkowski norm. Then  
$$d_{\lvert\lvert\cdot \rvert\rvert_{a}}=d_{\lvert\lvert\cdot \rvert\rvert,\mathrm{arith}},$$
$$d_{\lvert\lvert\cdot \rvert\rvert_{m}}=d_{\lvert\lvert\cdot \rvert\rvert,\mathrm{max}}$$
\end{remark}
We shall denote $d_{ \lvert\lvert\cdot \rvert\rvert,\mathrm{arith}}$ and $d_{\lvert\lvert \cdot\rvert\rvert,max}$ by $d^a$ and $d^m$, respectively. 

\begin{remark}

\label{rm: topology}
The topology on $V$ generated by a convergence-symmetric Minkowski norm $\lvert\lvert \cdot \rvert\rvert$ is, by definition, the topology generated by the convergence-symmetric metric $d_{\lvert\lvert\cdot \rvert\rvert}$, and it coincides 
with the topology generated by the metric $d^a$ (or $d^m$).
 
 Observe that for a Minkowski normed  space $(V,\lvert\lvert \cdot \rvert\rvert)$ the following inequality holds for $v, w\in V$:
$$\lvert \ \lvert\lvert w\rvert\rvert -\lvert\lvert v \rvert\rvert \ \rvert \leq \max(\lvert\lvert w-v\rvert\rvert,\ \lvert\lvert v-w\rvert\rvert)=d^m(v,w) $$
It follows that if $(V,d)$ is convergence-symmetric, then the function $v\to \lvert\lvert v\rvert\rvert$ is continuous. 
\end{remark}

\begin{remark}
    Let $(V,\lvert\lvert\cdot \rvert\rvert)$ be a vector space with a Minkowski norm   and $d$  the corresponding metric. Then,  for all $x,y, a \in V$ and for all $\alpha\geq 0$,

    \begin{enumerate}
        \item 
        $d(x+a,y+a)=d(x,y)$;
        \item 
        $d(\alpha x,\alpha y)=\alpha d(x,y)$.
    \end{enumerate}
\end{remark}

\begin{example}[Minkowski functional]
	We describe a procedure to obtain Minkowski norms on $\R^n$. Let $\Omega\subset \mathbb{R}^n$ be a convex body, that is, a closed bounded convex subset of $\R^n$ which contains the origin in its interior. Then the following function is a Minkowski norm:
	
	\[
	p_\Omega(v)=\inf\left\{t>0:\frac{v}{t}\in\Omega\right\}.
	\]
	
	This Minkowski norm is called the \emph{Minkowski functional} of the set $\Omega$;  see \cite[p. 19]{2012-Hilbert}.
	
Conversely, if \(F\) is a Minkowski norm on \(\mathbb{R}^n\), define
\[
\Omega=F^{-1}([0,1])=\{x\in \mathbb{R}^n \mid F(x)\leq 1\}.
\]
Then \(\Omega\) is a convex body containing the origin in its interior.
Moreover, the Minkowski functional associated with \(\Omega\) satisfies
\[
p_{\Omega}=F.
\]
In this way, one obtains a bijection between convex bodies in
\(\mathbb{R}^n\) containing the origin in their interiors and Minkowski
norms on \(\mathbb{R}^n\). See \cite{2012-Hilbert} for details.

We prove that any Minkowski norm on \(\mathbb{R}^n\) is convergence-symmetric.
Let \(F\) be a Minkowski norm on \(\mathbb{R}^n\), and let \(\Omega\) be the
corresponding convex body. For each non-zero vector \(v \in \mathbb{R}^n\),
let \(b(v)\) be the intersection point of the boundary of \(\Omega\) with the
half-line starting at the origin in the direction of \(v\). Let \(|v|\) denote
the Euclidean norm of \(v\). It is not difficult to see that
\[
F(v)=p_{\Omega}(v)= \frac{|v|}{|b(v)|}.
\]

Since the origin lies in the interior of \(\Omega\), and since \(\Omega\) is
bounded, it follows that \(|v_n| \to 0\) if and only if \(p_{\Omega}(v_n)\to 0\).
It follows immediately that \(F=p_{\Omega}\) is convergence-symmetric. 
Furthermore, since \(\mathbb{R}^n\) is complete with respect to the arithmetic symmetrisation of \(F\), we see that \((\mathbb{R}^n,F)\) is complete.
 
\end{example}

\begin{example}
The set of convex, closed bounded subsets of $\R$ which contain $0$ as an interior point  consists of the  intervals $[a,b]$ where 
$-\infty<a<0<b<\infty$. Thus, for each such interval, the following function is a Minkowski norm on $\mathbb{R}$:
\begin{equation*}
p(x)=
\begin{cases}
	\dfrac{x}{b}, & x\geq 0,\\[4pt]
	\dfrac{x}{a}, & x<0.
\end{cases}
\end{equation*}
\noindent The above discussion show that any Minkowski norm on $\R$ is of this form.
\end{example}

	\section{Finsler structures and convergence-symmetry}\label{s:Finsler}
	After Minkowski norms, it is natural to study Finsler structures. We recall the following.

\begin{definition}[Finsler structure]
	A Finsler structure on a differentiable manifold $M$ is a function $F: TM \to [0,\infty)$ such that
	\begin{enumerate}
		\item 
	$F$ is continuous,
	\item
	for each $x\in M$, $F\lvert_{T_xM}$ is a Minkowski norm.
	\end{enumerate}
\end{definition}

Let $F$ be a Finsler structure on a manifold $M$. For each $C^1$ path $c: [a,b]\to M$, we define the length of $c$ by the formula

$$l(c)=l_F(c)=\int_{a}^{b}F(\dot{c}(t))dt.$$

\begin{definition}[Finsler metric]
	A weak metric $d$ on a differentiable manifold $M$ is called Finsler if it is the length metric associated with a Finsler structure, that is, if there exists a Finsler structure $F$
	on $M$ such that for every $x,y \in M$ we have
	$$d(x,y)= \inf\{l_F(c)\}$$
where $c$ ranges over all piecewise $C^1$ paths such that $c(0)=x$ and $c(1)=y$.
\end{definition}

Note that a Finsler metric on $M$ is generally asymmetric. It is symmetric if the Minkowski norms on $T_xM (x\in M)$, are  norms. In \cite{SOP2} it is proved that the topology induced by a Finsler metric on $M$ coincides with the topology of the manifold $M$. In particular, it follows that a Finsler weak metric is a metric. Moreover, in the same paper, the authors prove that such a metric satisfies 
Busemann's axiom.

\begin{proposition}[Finsler metrics on compact manifolds are convergence-symmetric] \label{Finsler-convergence-symmetric} 
  Let $M$ be a compact $n$-dimensional differentiable manifold, and let $F$ a Finsler structure on $M$. Then the associated distance function on $M$ is convergence-symmetric.
  \end{proposition}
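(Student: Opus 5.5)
The plan is to show that on a compact manifold the reversed Finsler structure is uniformly comparable to $F$. Concretely, we aim for a constant $c\geq 1$ with
\[
F(-v)\leq c\,F(v) \quad\text{for all } v\in TM,
\]
and then pass this bound to the length metric $d$. Once we have it, for any piecewise $C^1$ path $\gamma$ from $x$ to $y$, the reversed path $\bar\gamma(t)=\gamma(1-t)$ goes from $y$ to $x$ and satisfies $\dot{\bar\gamma}(t)=-\dot\gamma(1-t)$. Hence $l_F(\bar\gamma)\leq c\,l_F(\gamma)$. Taking the infimum over $\gamma$ gives $d(y,x)\leq c\,d(x,y)$ for all $x,y\in M$. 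This inequality immediately gives convergence-symmetry: if $d(p_n,q_n)\to 0$ then $d(q_n,p_n)\leq c\,d(p_n,q_n)\to 0$, and the converse follows by symmetry of the argument.

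To obtain $c$, fix an auxiliary Riemannian metric $g$ on $M$, which exists since $M$ is a manifold, and let $SM=\{v\in TM: g(v,v)=1\}$ be its unit sphere bundle. Since $M$ is compact, $SM$ is compact. The function $F$ is continuous on $TM$, and on each fibre it is a Minkowski norm, so $F(v)>0$ for $v\in SM$. The map $v\mapsto F(-v)/F(v)$ is therefore continuous and positive on $SM$, so it attains a finite maximum $c$. Positive homogeneity of $F$ in each fibre ($F(\alpha v)=\alpha F(v)$ for $\alpha\geq 0$) then extends $F(-v)\leq cF(v)$ from $SM$ to all of $TM$, the zero vectors being trivial. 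Essentially the same argument also gives constants $a,b>0$ with $a\sqrt{g(v,v)}\leq F(v)\leq b\sqrt{g(v,v)}$, although these are not needed here.

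The main point requiring care is the compactness and continuity step. It uses continuity of $F$ on the total space $TM$, not merely fibrewise, together with compactness of $M$ to make $SM$ compact. This is where compactness is essential: on a non-compact manifold the ratio $F(-v)/F(v)$ may be unbounded, as in the Funk metric. A minor technical point is that the definition of $d$ uses paths parametrised on $[0,1]$, so the reversal must be done on the same interval; reparametrisation invariance of $l_F$ under orientation-preserving changes of variable makes this harmless.
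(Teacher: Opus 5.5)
Your proposal is correct and follows essentially the same route as the paper: continuity of $F$ and compactness of $M$ give a uniform constant $c$ with $F(-v)\leq c\,F(v)$, which transfers to $d(y,x)\leq c\,d(x,y)$ and immediately gives convergence-symmetry. Your auxiliary Riemannian unit sphere bundle and your explicit path-reversal argument simply fill in details that the paper's terse proof leaves implicit.
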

 \begin{proof}
From the description of Minkowski norms on $\R^n$, it follows that for each $x$ in $M$ there exists a real number $c_x>0$ such that $$F(x,-v)\leq c_x F(x,v)$$
for all $v\in T_xM$. The compactness of $M$ and the continuity of $F$ imply that there exists a constant $c>0$ such that $$F(x,-v)\leq c F(x,v),$$
 for all $x \in M$ and for all $v \in T_xM$. 
 Let $d_F$ be the induced weak metric. Then, $$d_F(y,x)\leq c d_F(x,y)$$
for all $x,y \in M$. This implies that $d_F$ is convergence-symmetric. Since \(M\) is compact and the topology of \(M\) coincides with the topology induced by
\(d_F\), the metric space \((M,d_F)\) is compact. Hence it is complete.
\end{proof}

 Furthermore note that $d_F$ satisfies the following property: for any two sequences $(x_n)$ and $(y_n)$ in $M$, the sequence $(d_F(x_n,y_n))$ is bounded if and only if the sequence $(d_F(y_n,x_n))$ is bounded.

In the next section, we continue the study of covergence-symmetric metric spaces.

\section{Arzel\`a-Ascoli, Baire category and Hopf--Rinow Theorems for convergence-symmetric spaces}\label{section:Ascoli}
Let $(X,d)$ be a  metric space. A sequence of functions $f_n: [a,b]\to X$ is said to be uniformly equicontinuous if for every $\epsilon>0$, there exists a $\delta>0$ such that 
$\lvert t -t'\rvert<\delta$ implies $d(f_n(t),f_n(t'))<\epsilon$
for all $n\in \mathbb{N}$.

\begin{theorem}
	\label{thm:arzela}
	Let \((X,d)\) be a compact convergence-symmetric metric space and let
	\((f_n)\) be a uniformly equicontinuous sequence of functions
	\([a,b]\to X\). Then there is a subsequence \((f_{n_k})\) that converges
	uniformly.
\end{theorem}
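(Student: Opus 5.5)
The plan is to reduce the statement to the classical Arzel\`a--Ascoli theorem for the symmetric metric $d_{\mathrm{arith}}$ (or $d_{\max}$), and then transfer uniform convergence back to $d$ using the remarks on uniform convergence made earlier.

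\textbf{Step 1: compactness passes to the symmetrisation.} Since convergence-symmetry implies Busemann's axiom, Proposition \ref{prop:compactness} shows that $(X,d_{\mathrm{arith}})$ is a compact symmetric metric space.

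\textbf{Step 2: equicontinuity passes to the symmetrisation.} This is the only step where convergence-symmetry is really used, so I expect it to be the main point to check. Uniform equicontinuity with respect to $d$ only controls $d(f_n(t),f_n(t'))$ for $|t-t'|<\delta$. However, the condition $|t-t'|<\delta$ is symmetric in $t,t'$, so the same $\delta$ also gives $d(f_n(t'),f_n(t))<\epsilon$. Hence
\[
d_{\mathrm{arith}}(f_n(t),f_n(t'))<\epsilon
\]
whenever $|t-t'|<\delta$, for all $n$. So equicontinuity transfers without any additional argument. Alternatively, one can argue via sequences: if $|t_k-t'_k|\to 0$, then both orders of the distances tend to $0$, and convergence-symmetry guarantees the reverse direction anyway.

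\textbf{Step 3: apply the classical theorem.} The family $(f_n)$ is uniformly equicontinuous into the compact metric space $(X,d_{\mathrm{arith}})$, and pointwise relative compactness is automatic because $X$ is compact. The classical Arzel\`a--Ascoli theorem on the compact interval $[a,b]$ therefore gives a subsequence $(f_{n_k})$ and a function $f$, continuous for $d_{\mathrm{arith}}$, such that $f_{n_k}\to f$ uniformly with respect to $d_{\mathrm{arith}}$. If one prefers to avoid quoting the classical theorem, the standard diagonal argument works directly: extract a subsequence converging at every rational point of $[a,b]$, then use equicontinuity to show it is uniformly Cauchy for $d_{\mathrm{arith}}$, and conclude using completeness of the compact space.

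\textbf{Step 4: return to $d$.} Since $d\le 2d_{\mathrm{arith}}$, we get $\sup_t d(f_{n_k}(t),f(t))\to 0$, and likewise $\sup_t d(f(t),f_{n_k}(t))\to 0$. This is exactly uniform convergence in the sense of the definition for spaces with the convergence-symmetry property; it is also covered by the remark following that definition. Finally, by Corollary \ref{same topology}, the limit $f$ is continuous with respect to $\mathcal{T}_d$.
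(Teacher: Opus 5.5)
Your proposal is correct and follows the paper's proof: pass to $d_{\mathrm{arith}}$, get compactness from Proposition~\ref{prop:compactness}, transfer uniform equicontinuity, apply the classical Arzel\`a--Ascoli theorem, and return to $d$ via $d\le 2d_{\mathrm{arith}}$. Your own Step~2 shows that the transfer of equicontinuity needs no convergence-symmetry, since the condition $|t-t'|<\delta$ is symmetric. So your phrase ``the only step where convergence-symmetry is really used'' is a misstatement rather than a gap: convergence-symmetry enters only through Busemann's axiom, which is what the compactness step requires.
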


\begin{proof}
From the assumptions, it follows that $(f_n)$ is  uniformly equicontinuous with respect to the metric $d_{\mathrm{arith}}$. Since $(X,d_{\mathrm{arith}})$ is compact  (Proposition \ref{prop:compactness}),   the usual Arzel\`a--Ascoli Theorem implies that there is a subsequence that converges uniformly with respect to $d_{\mathrm{arith}}$. It follows that this subsequence converges uniformly with respect to $d$. 
\end{proof}

We pass now to the Baire category theorem for convergence-symmetric metric spaces.
If $A$ is a subset of a topological space $X$, then the interior of $A$ is the union of all open subsets of 
$A$. Thus $A$ has empty interior if and only if its complement is dense in $X$.

\begin{definition}
	A topological space \(X\) is called a Baire space if for every sequence
	\((A_n)_{n\geq 1}\) of closed subsets of \(X\) with empty interior, the union
	\[
	\bigcup_{n=1}^{\infty} A_n
	\]
	has empty interior.
\end{definition}

\begin{theorem}
    Let $(X,d)$ be a complete convergence-symmetric metric space. Then $X$ is a Baire space.
    \begin{proof}
        If $(X,d)$ is complete, then $(X,d_{\mathrm{arith}})$ is complete. Therefore $(X,d_{\mathrm{arith}})$ is a 
        Baire space. But the topology $\mathcal{T}_d$ coincides with the topology on $X$ generated 
        by $d_{\mathrm{arith}}$. Thus $X$ is a Baire space.
    \end{proof}
\end{theorem}

First, we state a version of the Hopf--Rinow theorem proved in Busemann's monograph \cite{Busemann1970}. We then establish a version of the Hopf--Rinow theorem adapted to the setting of convergence-symmetric spaces. Note that the result in \cite{Busemann1970} is more general than ours.
We start with the definition of length in a metric space satisfying Busemann's axiom.

\begin{definition}[Paths, lengths of paths and rectifiable paths]
	Let $(X,d)$ be a metric space satisfying Busemann's axiom. A continuous
	map $c: [a,b]\to X$ is called a {\it path}. The {\it length} $l_d(c)=l(c)$
	of a path $c$ is 
	$$l(c)=\sup \{\sum_{i=0}^{n-1}d(c({t_i}),c(t_{i+1}))\},$$
	\noindent where the supremum is taken over all partitions $a=t_0\leq t_1\leq \dots t_{n-1}\leq t_n$ of the interval $[a,b]$. The path $c$ is said to be {\it rectifiable} is it has finite length.
\end{definition}

We now start to describe the basic properties of the length function.

\begin{remark}
	
Let $(X,d)$ be a metric space satisfying Busemann's axiom and let $c: [a,b]\to X$ be a path.

\begin{enumerate}
	\item 
	$l(c)\geq d(c(a),c(b))$ and $l(c)=0$ if and only if $c$ is the constant map;
	\item 
	 if \(\phi:[a',b']\to [a,b]\) is weakly increasing with
	\(\phi(a')=a\) and \(\phi(b')=b\), then
	\[
	l(c)=l(c\circ \phi);
	\]
	\item 
	if $c$ is concatenation of two paths $c_1$ and $c_2$, then $l(c)=l(c_1)+l(c_2)$.

	\end{enumerate}
\end{remark}

\begin{definition}[Length metric space]
	Let $(X,d)$ be a metric space satisfying Busemann's axiom. Then $d$ is said to be a length metric if for any $x,y$ in $X$  $d(x,y)$ is
	equal to the infimum of the length of rectifiable paths joining $x$ to $y$.
\end{definition}

\begin{example}
	Let $M$ be a differentiable manifold, and  $F$  a Finsler structure on $M$. Let us denote the metric on $M$ induced by $F$ by $d_F$. This means that $d_F(x,y)$
	is the infimum of the lengths of the paths joining $x$ to $y$. As we noted earlier, the authors proved in \cite{SOP2} that $d_F$  is a metric which satisfies Busemann's axiom. Let us denote the length  of a path $c: [a,b]$ computed with respect to the metric $d_F$ by $l_{d_F}$. A result of Busemann--Mayer implies that $l_{d_F}(c)=l_F(c)$ for any piecewise $C^1$-path $c$ (see \cite{BM}). It follows that the metric $d_F$ is a length metric.
\end{example}

\begin{definition}
	Let $(X,d)$ be a metric space. A function $c: I\to X$ is called a (constant speed) geodesic if there is a constant $\beta>0$ such that 
	$$d(c(t),c(t'))=\beta (t' -t)$$
	\noindent for all $t,t' \in [a,b]$ with $t'\geq t$, where $I$ in an interval. In this case we say that $c$ is a (constant speed) geodesic from $c(a)$ to $c(b)$ provided that $I=[a,b]$. The space $(X,d)$ is called a geodesic (metric) space if for any $x$ and $y$ in $X$, there is a geodesic joining $x$ to $y$.
\end{definition}

\begin{remark}
	Assume that $(X,d)$ is a metric space satisfying Busemann's axiom. Then a geodesic $c: [a,b]\to X$ is a continuous map. This can be proved by using the sequential characterisation of continuity (see Proposition \ref{prop:seq-cont}).
\end{remark}

Let $(X,d)$ be a metric space satisfying Busemann's axiom. Let $p\in X$ and $\rho>0$. Let us use the following notation:
$$\overline{S^+}(p,\rho)=\{x\in X: d(p,x)\leq \rho\}.$$

\begin{theorem}[{\cite[p.~4]{Busemann1970}}]
	Let $(X,d)$ be locally compact length space satisfying Busemann's axiom. Then the following are equivalent:
	\begin{enumerate}
		\item 
		$\overline{S^+}(p,\rho)$ is compact 
		for each $x\in X$ and $\rho>0$.
		\item 
		The space $(X,d)$ is forward complete.
		\item 
		Each geodesic $[a,b)\to X$ can be extended to a geodesic $[a,b]\to X$.
	\end{enumerate}
	Furthermore, if these conditions are satisfied, then $X$ is a geodesic space.
	\end{theorem}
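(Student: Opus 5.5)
The plan is to follow the classical Hopf--Rinow--Cohn-Vossen scheme, adapted to asymmetric distances. Busemann's axiom and Lemma \ref{Busemann same top} let us use sequential arguments in the single topology $\mathcal{T}_d$. Proposition \ref{prop:compactness} lets us pass from sequential compactness to compactness. We prove (1)$\Rightarrow$(2)$\Rightarrow$(3)$\Rightarrow$(1), and then deduce the geodesic property.

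\emph{(1)$\Rightarrow$(2).} Take a forward Cauchy sequence $(x_n)$. Choose $N$ with $d(x_N,x_{N+k})<1$ for all $k\ge 0$. Then every $x_{N+k}$ lies in $\overline{S^+}(x_N,1)$, which is compact by (1). Hence a subsequence converges to some $x$; by Busemann's axiom it converges both forward and backward. The forward Cauchy property then gives, for $n\ge N(\epsilon)$ and $n_k\ge n$, the bound $d(x_n,x)\le d(x_n,x_{n_k})+d(x_{n_k},x)<2\epsilon$. So $d(x_n,x)\to 0$.

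\emph{(2)$\Rightarrow$(3).} Let $c:[a,b)\to X$ be a geodesic with speed $\beta$, and take $t_n\uparrow b$. For $t_n\le t_m$ we have $d(c(t_n),c(t_m))=\beta(t_m-t_n)$, so $(c(t_n))$ is forward Cauchy. By (2) it has a forward limit $y$. This limit does not depend on the sequence $(t_n)$: interleave two such sequences and use uniqueness of limits. Set $c(b)=y$. The identity $d(c(t),c(b))=\beta(b-t)$ then follows from continuity of $d$ (Corollary \ref{cor:metric-cont}).

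\emph{(3)$\Rightarrow$(1) and existence of geodesics.} This is the main obstacle. It is the analogue of the Hopf--Rinow argument, and here local compactness and the length structure are essential. Fix $p$ and let $R$ be the supremum of the $\rho$ for which $\overline{S^+}(p,\rho)$ is compact. Local compactness, together with Busemann's axiom (small forward balls are neighbourhoods in $\mathcal{T}_d$), gives $R>0$. Suppose $R<\infty$. Two steps are then needed.
\begin{itemize}
\item First show that $\overline{S^+}(p,R)$ is compact. Given a sequence $(y_n)$ in it, use the length property to pick points $y_n'$ with $d(p,y_n')\le R-\delta$ and $d(y_n',y_n)\le 2\delta$. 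Extract convergent subsequences for each $\delta=1/k$ and run a diagonal argument to obtain a forward Cauchy subsequence. Convergence of that subsequence is the delicate point: without (2) we cannot simply invoke completeness. I would first prove that (3) implies that every point $q$ with $d(p,q)<R$ is joined to $p$ by a geodesic. This uses Arzel\`a--Ascoli (Theorem \ref{thm:arzela}, applied in the compact ball, where convergence-symmetry may be replaced by working with $d_{\max}$) on near-minimal paths parametrised proportionally to arclength. Geodesics from $p$ to the points $y_n'$ can then be extended using (3) and a limiting argument.
\item Second, cover the compact set $\overline{S^+}(p,R)$ by finitely many compact neighbourhoods, and use the Lebesgue number lemma to obtain compactness of $\overline{S^+}(p,R+\eta)$ for some $\eta>0$. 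This contradicts the choice of $R$.
\end{itemize}

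Throughout, the asymmetry forces care with the order of arguments. All balls are forward balls centred at $p$, and paths always go from $p$ outward. Busemann's axiom is what ensures that limits in $\mathcal{T}_d$ are compatible with both $d(p,\cdot)$ and $d(\cdot,p)$. Finally, geodesic completeness of $X$ follows from (1): for given $x,y$, apply Arzel\`a--Ascoli to minimizing sequences of paths from $x$ to $y$, which all lie in $\overline{S^+}(x,d(x,y)+1)$. The uniform limit of these paths is a geodesic from $x$ to $y$, by lower semicontinuity of length.
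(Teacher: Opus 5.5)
The paper gives no proof of this statement; it only cites Busemann's monograph, so your proposal has to stand on its own. Several parts are sound:
\begin{itemize}
\item your implications (1)$\Rightarrow$(2) and (2)$\Rightarrow$(3);
\item the positivity of $R$;
\item the passage from a compact $\overline{S^+}(p,R)$ to a compact $\overline{S^+}(p,R+\eta)$, which is essentially the first half of the proof of Theorem~\ref{thm:Hopf--Rinow-0}, where every estimate runs forward from $p$;
\item the final existence of geodesics.
\end{itemize}

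The gap is the compactness of $\overline{S^+}(p,R)$ under (3). The diagonal argument on the points $y'_n$ does not produce a forward Cauchy subsequence of $(y_n)$. To bound $d(y_n,y_m)$ you have to leave $y_n$, so you need $d(y_n,y'_n)$ small. Your construction only controls $d(y'_n,y_n)\le 2\delta$. This is exactly the step at which the proof of Theorem~\ref{thm:Hopf--Rinow-0} invokes convergence-symmetry. Busemann's axiom only concerns sequences converging to a fixed point, so it does not supply this (the Funk metric satisfies Busemann's axiom but is not convergence-symmetric).

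Your fallback does not repair this either. Condition (3) only supplies the missing endpoint of a geodesic defined on a half-open interval. It does not prolong a geodesic from $p$ to $y'_n$ beyond $y'_n$. You also do not say to which geodesic on $[a,b)$ you apply it, or why $(y_n)$ converges to its endpoint.

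The missing idea is to give up on Cauchy sequences and prove $d(x,y_n)\to 0$ for an explicitly constructed $x$.
\begin{itemize}
\item Assume $d(p,y_n)\to R$. Let $\alpha_n$ be paths from $p$ to $y_n$, parametrised by arc length on $[0,l_n]$, with $l_n<d(p,y_n)+1/n$.
\item For each $\epsilon>0$, the restrictions to $[0,R-\epsilon]$ take values in the compact set $\overline{S^+}(p,R-\epsilon)$. On a compact set, Busemann's axiom does imply convergence-symmetry (pass to convergent subsequences).
\item So Theorem~\ref{thm:arzela} and a diagonal argument give a subsequence converging uniformly on every $[0,R-\epsilon]$ to some $\gamma\colon[0,R)\to X$.
\item The inequality $d(p,y_n)\le d(p,\alpha_n(t))+(l_n-t)$ yields $d(p,\gamma(t))=t$. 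Hence $d(\gamma(s),\gamma(t))=t-s$ for $s\le t$, so $\gamma$ is a geodesic, and (3) provides $x=\gamma(R)$.
\end{itemize}
Then
\[
d(x,y_n)\le d(x,\gamma(t))+d(\gamma(t),\alpha_n(t))+d(\alpha_n(t),y_n).
\]
The last term is at most $l_n-t$, which is the direction you do control. The middle term tends to $0$ as $n\to\infty$. The first tends to $0$ as $t\to R$ by Busemann's axiom at the single point $x$, since $d(\gamma(t),x)=R-t$. Hence $y_n\to x$, so $\overline{S^+}(p,R)$ is sequentially compact, hence compact.
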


Note that a generalisation of the above theorem for a larger class of metric spaces was obtained in \cite{Mennucci2014}. We return to the proof of the Hopf--Rinow Theorem in the setting of convergence-symmetric metric spaces.
	
\begin{proposition}\label{prop:rectifiable}
Let $(X,d)$ be a metric space satisfying Busemann's axiom and let $c: [a,b]\to X$ be a continuous map.  If $c$ is rectifiable of length $l$, then the function $\lambda: [a,b]\to [0,l]$ defined  by $$\lambda(t)=l(c\lvert_{[a,t]})$$
\noindent is a continuous weakly increasing function.
\begin{proof}
It is enough to prove  that for any $\epsilon>0$, there is a partition of the interval $[a,b]$ such that  length of $c$ restricted each of these subintervals is less than or equal to $\epsilon$. Observe that  since $X$ satisfies 
Busemann's axiom,  $c:[a,b]\to X$ is uniformly continuous (see Theorem \ref{thm:uniform-continuity}).  Thus there exists a $\delta >0$ such that $d(c(t),c'(t))<\epsilon/2$ whenever $\lvert t-t'\rvert <\delta$. Since $l(c)$ is finite we can find a partition $a=t_o\leq t_1\dots \leq t_k=b$
such that 
$$\sum_{i=0}^{k-1}d(c(t_i),c(t_{i+1}))> l(c)-\epsilon/2.$$
\noindent By taking a refinement of this partition, we may assume that $\lvert t_i-t_{i+1}\rvert <\delta$, and this mplies that $d(c(t_i),c(t_{i+1}))<\epsilon/2$. Then we have
$$l(c)=\sum_{i=0}^{k-1}l(c\lvert_{[t_i,t_{i+1}]})\geq \sum_{i=0}^{k-1}d(c(t_i),c(t_{i+1}))>l(c)-\epsilon/2.
$$
\noindent Since $l(c\lvert_{[t_i,t_{i+1}]})\geq d(c_{t_i},c_{t_{i+1}})$, we see that 
$$l(c(t_i),c(t_{i+1})))-d(c(t_i),c({t_{i+1}}))\leq \epsilon/2,$$
\noindent and this implies $l(c(t_i),c(t_{i+1}))<\epsilon$.

\end{proof}
\end{proposition}

\begin{definition}
	A path $c:[a,b]\to X$ is parametrised proportionally to arc-length if 
	\[
	\lambda(t)=\frac{t-a}{b-a}\,l(c)=\frac{t-a}{b-a}\,\lambda(b).
	\]
	\noindent for all $t\in [a,b]$, where $\lambda$ is the function defined in Proposition \ref{prop:rectifiable}.
\end{definition}
\begin{remark}
	Let $c: [a,b]\to X$ be a rectifiable path. It is easy to see that there is a reparametrisation 
	$c':[0,1]\to X$ of $c$ which is proportional to arc-length.
\end{remark}

Let $[a,b]$ be a compact interval and $(X,d)$ a convergence-symmetric metric space. If $c: [a,b]\to X$ is a continuous path then its image is a compact set.
Let $\mathcal{C}([a,b],X)$ be the set of paths in $X$ with domain $[a,b]$. The function

$$d_{\mathcal{C}}: \mathcal{C}([a,b],X)\to \R$$
  given by $$d_{\mathcal{C}}(c,c')=\sup_{t\in [a,b]} d(c(t),c'(t))$$
  is a convergence symmetric-metric.

\begin{definition}[Lower semi-continuous function]
	Let $E$ be a topological space and $\bar{\R}=\R\cup \{-\infty,\infty\}$. A function $E\to \bar{\R}$ is called lower semi-continuous at $x_0 \in E$ if for any real number $m< f(x_0)$ there is a neighbourhood $W$ of $x_0$ in $E$ such that for all $x \in W$, we have $m \leq f(x)$. A function is called lower semi-continuous if it is lower semi-continuous at each point in its domain.
\end{definition}

The following proposition and lemma are taken from pages 30--31 of \cite{Papadopoulos2005}.

\begin{proposition}
	\label{prop:upper-limit}
	Let $(f_i)_{i\in I}$ be a family of maps from $E$ to $\bar{\R}$ and let $f: E\to \bar{\R}$ be the upper limit of this family, that is, $f(x)=\sup_{i\in I}f_i(x)$ for all $x\in E$. Let $x_0\in E$. If each $f_i$ is lower semi-continuous at $x_0$, then $f$ is lower semi-continuous at $x_0$.
	\end{proposition}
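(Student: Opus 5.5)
The plan is to argue directly from the definition of lower semi-continuity at $x_0$, splitting according to whether $f(x_0)=-\infty$.

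\emph{Trivial case.} If $f(x_0)=-\infty$, there is no real $m<f(x_0)$. The condition is then vacuous and nothing needs to be checked.

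\emph{Main case.} Otherwise, fix a real number $m<f(x_0)=\sup_{i\in I}f_i(x_0)$. Since $m$ is strictly below the supremum, I will first choose an index $i_0\in I$ with $m<f_{i_0}(x_0)$. This works whether the supremum is finite or $+\infty$.

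Next I apply the lower semi-continuity of $f_{i_0}$ at $x_0$ with this same real number $m$. This gives a neighbourhood $W$ of $x_0$ in $E$ such that $m\leq f_{i_0}(x)$ for all $x\in W$.

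Finally, for every $x\in W$ we have $f_{i_0}(x)\leq \sup_{i\in I}f_i(x)=f(x)$. Hence $m\leq f(x)$ on $W$, which is exactly lower semi-continuity of $f$ at $x_0$.

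\emph{Where care is needed.} The argument is essentially routine. The only step requiring care is the choice of $i_0$, i.e.\ the use of the strict inequality $m<f(x_0)$ to pick an approximating index. This also has to be handled correctly in the extended reals: when $f(x_0)=+\infty$, some $f_{i_0}(x_0)$ still exceeds any given real $m$. No metric or convergence-symmetry hypothesis is used, so the statement holds for an arbitrary topological space $E$.
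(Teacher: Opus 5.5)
Your proof is correct and follows the standard argument: choose an index $i_0$ with $m<f_{i_0}(x_0)$, take the neighbourhood $W$ given by lower semi-continuity of $f_{i_0}$ at $x_0$, and use $f_{i_0}\leq f$ on $W$. The paper gives no proof of its own but cites pages 30--31 of \cite{Papadopoulos2005}, where the argument is essentially this one. Your separate treatment of the case $f(x_0)=-\infty$, which also covers an empty index set, is harmless but not needed, since the condition is then vacuous anyway.
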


\begin{lemma}
	\label{lemma:athanase}
Let $E$ be a  topological space and $f: E\to \bar{\R}
$ be  a function which is lower semi-continuous at $x \in E$. Then for any sequence $(x_n)$ that converges to $x$, we have 
$$f(x)\leq \lim\inf_{n\to \infty}f(x_n).$$
\end{lemma}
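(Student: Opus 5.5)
We argue by contradiction from the definition of lower semi-continuity at $x$. If $f(x)=-\infty$ there is nothing to prove, so assume $f(x)>-\infty$. Let $(x_n)$ be a sequence in $E$ converging to $x$. We must show that
\[
L:=\liminf_{n\to\infty} f(x_n)\geq f(x).
\]
Suppose instead that $L<f(x)$. Then we can choose a real number $m$ with $L<m<f(x)$. This is possible because $L\in[-\infty,\infty)$ and $f(x)\in(-\infty,\infty]$, and the reals are dense in the extended line.

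Next we apply lower semi-continuity at $x$ with this $m<f(x)$. It gives a neighbourhood $W$ of $x$ such that $f(y)\geq m$ for all $y\in W$. Since $(x_n)$ converges to $x$ in the topology of $E$, there is $n_0$ such that $x_n\in W$ for every $n\geq n_0$. Hence $f(x_n)\geq m$ for all $n\geq n_0$, and so
\[
L=\liminf_{n\to\infty} f(x_n)=\sup_{N}\inf_{n\geq N} f(x_n)\geq \inf_{n\geq n_0} f(x_n)\geq m.
\]
This contradicts the choice $m>L$, which proves the lemma.

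The only delicate point is the handling of the extended values $\pm\infty$ when choosing the intermediate real number $m$; this is covered by the case analysis above. In the setting of this paper, $E$ will be a space such as $\mathcal{C}([a,b],X)$ with the topology of $d_{\mathcal{C}}$, or a space whose topology comes from a metric satisfying Busemann's axiom. Convergence in such a space is the topological convergence of Lemma \ref{Busemann same top}, so the argument applies verbatim there.
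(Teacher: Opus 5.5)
Your proof is correct, including the handling of $\pm\infty$ when choosing a real $m$ strictly between $\liminf_{n\to\infty} f(x_n)$ and $f(x)$. The paper gives no proof of its own and only cites Papadopoulos's book; your argument is the standard one (each real $m<f(x)$ forces $f(x_n)\geq m$ eventually, hence $\liminf_{n\to\infty} f(x_n)\geq m$), merely phrased by contradiction rather than directly.
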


\begin{theorem}
	\label{theorem.lower-semicont}
	 If the space $(X,d)$ is convergence-symmetric, then the length function $L:\mathcal{C}([a,b],X)\to \bar{\R} $ defined by $L(c)=l(c)$ is lower semi-continuous.
	
	\begin{proof}
		For each $t\in [a,b]$, the map $\mathcal{C}([a,b],X)\mapsto X$ given by $t\mapsto c(t)$ is continuous. This follows from the inequality $d(c(t),c'(t))\leq d_{\mathcal{C}}(c,c')$. It follows from Corollary \ref{cor:metric-cont} that for any $t $ and $t'$ in $[a,b]$, the function $c\to d(c(t),c(t'))$ is continuous. Now the definition of the length of $c$ and Proposition \ref{prop:upper-limit} imply that $L$ is lower semi-contiuous.
	\end{proof}
\end{theorem}

\begin{corollary}
	\label{cor:lower-semicont-1}
	
Let $(c_n: [a,b]\to X)$ be a sequence of paths converging uniformly to a function  $c: [a,b] \to X$. If $(X,d)$ is convergence-symmetric then $c$ is continuous and 
$$l(c)\leq \lim\inf_{n\to \infty}l(c_n).$$

\begin{proof}
	Continuity of $c$ follows from Proposition \ref{prop:uniform-convergence}. The rest of the statement follows from Lemma \ref{lemma:athanase} and Theorem 
	\ref{theorem.lower-semicont}.
\end{proof}
\end{corollary}

 A metric space $(X,d)$ satisfying Busemann's axiom is said to be locally compact when $X$ is locally compact with respect to the topology 
generated by $d_{\mathrm{arith}}$. We denoted this topology by $\mathcal{T}_d$. See Definition \ref{topology} and Lemma \ref{Busemann same top}.

\begin{theorem}
	\label{thm:Hopf--Rinow-0}
	Let $(X,d)$ be a convergence-symmetric length space. 
	If $X$ is  complete and locally compact, then every bounded closed subset of $X$ is compact.
	\end{theorem}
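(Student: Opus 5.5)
The plan is to reduce to forward closed balls and then run the classical Hopf--Rinow argument, using a uniform reformulation of convergence-symmetry.

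\emph{Reduction.} If $A$ is closed and bounded, fix $p\in X$ and some $a_0\in A$. Then $A\subset \overline{S^+}(p,\rho)$ with $\rho=d(p,a_0)+\mathrm{diam}(A)$. By Corollary \ref{cor:metric-cont}, $\overline{S^+}(p,\rho)$ is closed, so it suffices to show that every $\overline{S^+}(p,r)$ is compact. Compactness is taken in $\mathcal T_d$, equivalently sequential compactness by Proposition \ref{prop:compactness}.

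\emph{Uniform convergence-symmetry.} I first record that convergence-symmetry is equivalent to a uniform modulus: for every $\delta>0$ there is $\eta>0$ such that $d(a,b)<\eta$ implies $d(b,a)<\delta$. Suppose this failed for some $\delta$. Then we could pick $a_n,b_n$ with $d(a_n,b_n)<1/n$ and $d(b_n,a_n)\geq\delta$, contradicting Definition \ref{def:cs}. This is the key tool. It lets me convert one-sided smallness coming from the length structure into two-sided smallness.

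\emph{Main argument.} Let $R=\sup\{r>0:\overline{S^+}(p,r)\text{ compact}\}$ and argue by contradiction, assuming $R<\infty$.

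First, $R>0$. Take a compact neighbourhood $K$ of $p$. If no forward ball $\{x: d(p,x)<\epsilon\}$ lay in $K$, we would get $x_n\notin K$ with $d(p,x_n)\to0$. By Busemann's axiom $x_n\to p$, which is absurd since $K$ is a neighbourhood of $p$.

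Second, $\overline{S^+}(p,R)$ is compact. Let $(x_n)$ be a sequence in it and fix $\epsilon>0$. Since $d$ is a length metric, a path from $p$ to $x_n$ of length $<d(p,x_n)+\epsilon$ contains, by continuity of $\lambda$ (Proposition \ref{prop:rectifiable}), a point $y_n$ with $d(p,y_n)\leq R-\epsilon$ and $d(y_n,x_n)\leq 2\epsilon$. The $y_n$ lie in the compact ball $\overline{S^+}(p,R-\epsilon)$, so a subsequence converges. Using the uniform modulus, $d(x_n,y_n)$ is also small, so along that subsequence
\[
d(x_n,x_m)\leq d(x_n,y_n)+d(y_n,y_m)+d(y_m,x_m)
\]
is eventually small. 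A diagonal argument over $\epsilon=1/k$ then produces a Cauchy subsequence. By completeness it converges, and its limit stays in the closed ball.

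Third, the compactness extends past $R$. Cover $\overline{S^+}(p,R)$ by finitely many open sets with compact closures. By the Lebesgue number lemma together with the uniform modulus, there is $\eta>0$ such that every $z$ with $d(y,z)<\eta$ for some $y\in\overline{S^+}(p,R)$ lies in the union $C$ of these compact closures. If $d(p,z)\leq R+\eta/2$, the length structure gives $y$ on a near-minimal path with $d(p,y)\leq R$ and $d(y,z)<\eta$. Hence $\overline{S^+}(p,R+\eta/2)\subset C$ is a closed subset of a compact set, and so is compact. This contradicts the definition of $R$.

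\emph{Main obstacle.} I expect the delicate point to be the second step, where we only control $d(y_n,x_n)$ but need two-sided estimates. The uniform modulus handles it, and the same issue recurs in the third step, where a Lebesgue number for $d$ must be turned into a forward neighbourhood of the ball.
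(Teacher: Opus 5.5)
Your proposal is correct and follows the paper's proof essentially step for step. Both take the supremum $\rho_0$ of radii of compact forward balls and show $\overline{S^+}(p,\rho_0)$ is sequentially compact: each $x_n$ is pushed back along a near-minimal path to a point in a smaller compact ball, then convergence-symmetry (your uniform modulus is the paper's inline contradiction argument), a diagonal argument and completeness do the rest, and a finite cover extends compactness past $\rho_0$. The only real difference is in the extension step: the paper covers by forward balls $S^+(x_i,\epsilon_{x_i}/2)$ and uses $d(x_i,x)\le d(x_i,y)+d(y,x)$ directly, so it needs neither your Lebesgue-number/uniform-modulus argument nor convergence-symmetry there.
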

\begin{proof}
We give a proof imitating the proof in the case of  symmetric length space, which can be found in  \cite[Théorème 1.10]{GLP} and also \cite[Chap I, Theorem 3.7]{BH}. 

It suffices to prove the statement for a closed (right) ball $\overline{S^+}(p, \rho)$ for every $p \in X$ and every $\rho >0$.
For simplicity, we denote the closed ball $\overline{S^+}(p, \rho)$ by $\bar S(\rho)$.
Let $\rho_0=\sup \{\rho : \bar S(\rho) \text{ is compact}\}$.
Since $(X,d)$ is locally compact, $\rho_0$ is positive.
We are done if we can show that $\rho_0=\infty$.

Suppose that $\rho_0$ is finite.
We consider the ball $\bar S(\rho_0)$.
Suppose first that $\bar S(\rho_0)$ is compact.
By local compactness, for each point $x$ in $\bar S_{\rho_0}$, there exists $\epsilon_x >0$ such that $\overline{S^+}(x, \epsilon_x)$ is compact.
The open sets $\{S^+(x, \epsilon_x/2)\}$ cover $\bar S(\rho_0)$, hence we can find a finite sub-covering $\{S^+(x_1, \epsilon_{x_1}/2), \dots , S^+(x_k, \epsilon_{x_k}/2)\}$ of $\bar S(\rho_0)$.
Then $B:=\bar S_{\rho_0} \cup \overline{S^+}(x_1, \epsilon_{x_1}) \cup \dots \cup \overline{S^+}(x_k, \epsilon_{x_k})$ is compact.

Let $\epsilon_0=\min\{\epsilon_1, \dots , \epsilon_k\}$.
We consider the ball $\bar S(\rho_0+\epsilon_0/3)$.
Since $(X,d)$ is a length space, for any point $x \in \bar S(\rho_0+\epsilon_0/3)$, there exists an arc $\alpha$ connecting  $p$ to $x$ with length less than $\rho_0+\epsilon_0/2$.
We can take a point $y$ on $\alpha$ such that $d(y, x)=\epsilon_0/2$.
Then, since $d(p, y) \leq \rho_0$, we see that $y$ is contained in $\bar S(\rho_0)$.
It follows that $y$ is contained in $S^+(x_i, \epsilon_{x_i}/2)$ for some $i=1, \dots , k$, and hence that $x$ is contained in $\overline{S^+}(x_i, \epsilon_{x_i}) \subset B$.
Thus we have shown that $\bar S(\rho_0+\epsilon_0/3)$, which is a closed subset of $B$, is also compact, contradicting our definition of $\rho_0$.
Therefore, if $\rho_0$ is finite, then $\bar S(\rho_0)$ cannot be compact.

We shall prove that $\bar S(\rho_0)$ is sequentially compact, which implies that $\bar S(\rho_0)$ is compact, for the topology of $X$ is induced by the symmetric metric $d_{\mathrm{arith}}$.
This will contradict what we have just proved above, and will imply that $\rho_0=\infty$.
Let $(x_n)$ be a sequence in $\bar S(\rho_0)$.
If $\liminf d(p,x_n) < \rho_0$, then there exists a subsequence of $(x_n)$ contained in $\bar S(\rho_1)$ with $\rho_1 < \rho_0$, which is assumed to be compact, and we are done.
Therefore, we can assume that $\lim d(p,x_n)=\rho_0$ and passing to a subsequence, that $d(p, x_n) >\rho_0-1/n$.

Since $(X,d)$ is a length space, for any $m \in \mathbb N$, there is an arc $\alpha^m_n$ connecting $p$ to $x_n$ with length less than $\rho_0+1/2m$.
%Since $d(p,x_n)> \rho_0-1/n$, the length of $\alpha^m_n$ is greater than $\rho_0-1/n$.
We choose  a point $y_n^m$ on $\alpha_n$ such that $1/m<d(y_n^m, x_n)<2/m$.
Then $d(p,y_n^m) < \rho_0-1/2m$, for otherwise $d(p, x_n) > \rho_n+2/m$.
Therefore, $(y^m_n)_n$ is contained in $\bar S(\rho_0-1/2m)$, which is compact. Passing to a subsequence, $(y^m_n)_n$ converges.
By a diagonal argument, we can take a subsequence of $(x_n)$ so that $(y^m_n)_n$ converges for every $m$.
Abusing notation, we denote such a subsequence of $(x_n)$ again by $(x_n)$.

We shall next show that for any $\epsilon >0$, there exists an integer $M$ such that $d(x_n, y^m_n)< \epsilon$ for every $n$ and $m >M$.
Suppose not.
Then, passing to a subsequence, there exists an integer $m(n)$ converging to $\infty$ as $n \to \infty$ such that $d(x_n, y^{m(n)}_n) \geq \epsilon$ for all $n$.
On the other hand, by our choice of $y^{m(n)}_n$, we have $d(y^{m(n)}_n, x_n) < 2/m(n) \to 0$.
This contradicts the assumption that $(X,d)$ is convergence-symmetric.
%Passing to a subsequence of $(y^m_n)_m$, and by a diagonal argument again, we can assume that $d(x_n, y_n^m) <1/m$ and $d(y_n^m, x_n)<1/m$ for every $n$.

Now, let us show that $(x_n)$ is a Cauchy sequence.
Let $\epsilon>0$ be any given positive number.
We can take a sufficiently large $m$ so that $d(x_n, y^m_n) < \epsilon$ and $d(y^m_{n+l}, x_n) < \epsilon$  for every $n$, as shown above.
Fix such $m$.
Since $(y^m_n)_n$ converges, it is a  Cauchy sequence.
Therefore, for the given $\epsilon >0$, there exists an integer $N$  such that $d(y_n^m, y_{n+l}^m)< \epsilon$ for any $n > N$ and $l\geq 0$.
Then, we have $d(x_n, x_{n+l})\leq d(x_n, y^m_n)+d(y^m_n, y^m_{n+l})+d(y^m_{n+l}, x_{n+l}) < 3\epsilon$.
This means that $(x_n)$ is a  Cauchy sequence. B completeness of $(X,d)$, it converges.
Thus we have shown that $\bar S_{\rho_0}$ is sequentially compact, which is a contradiction.
This shows that $\rho_0$ must be $\infty$.	
%	Let $A$ be a bounded closed subset of $X$. Then $A$ is bounded and closed with respect to $d_{\mathrm{arith}}$. Since $(X,d_{\mathrm{arith}})$ is complete and locally 
%	compact (see Theorem \ref{thm-crucial}), it follows from the Hopf--Rinow Theorem that $A$ is compact with respect to the metric $d_{\mathrm{arith}}$. Proposition \ref{prop:compactness} implies that $A$ is compact with respect to the metric $d$.
\end{proof}

\begin{theorem}
	\label{thm:Hopf--Rinow-1}
	Let $(X,d)$ be  a complete, locally compact, convergence-symmetric length metric space. If for all sequences $(x_n)$, $(y_n)$ in $X$, 
{the sequence} $(d(x_n,y_n))$ {is bounded implies that the sequence } $(d(y_n,x_n))$ is bounded  as well, then $X$ is a geodesic space.
\begin{proof}
	Let $a$ and $b$ be two distinct points of $X$. For every integer $n\geq 1$, there is a  path $c_n: [0,1]\to X$ which joins $a$ to $b$, which is  parametrised proportional to arc-length and such that 
	$$l(c_n)\leq d(a,b)+1/n.$$
	
	We show that the family $(c_n)$ is equicontinuous. Assume for contradiction that the sequence $(c_n)$ is not equicontinuous. Then there exist sequences $(t_k)$, $(t'_k)$ in $[0,1]$ and a subsequnce $(c_{n_k})$ such that $\lvert t_k-t'_k\lvert\to 0$ and $d(c_{n_k}(t_k),c_{n_k}(t'_k))$ does not converge to $0$ as $k\to \infty$.
	Assume that $t'_{k_l}\geq t_{k_l}$ for   infinitely many $k_l$, where $k_{l+1}>k_l$. Then we have
	$$\lvert t'_{k_l}-t_{k_l} \rvert=\frac{l(c_{n_{k_l}}\lvert_{[t_{k_l},t'_{k_l}]})}{l(c_{n_{k_l}})}\geq \frac{d(c_{n_{k_l}}(t_{k_l}),c_{n_{k_l}}(t'_{k_l}))}{d(a,b)+1}.$$
	
	\noindent Taking the limit in the above inequality, we get a contradiction. It follows that we may assume that $t_k > t'_k$ for all but finitely many $k$. In this case, exchanging the roles of $t_k$ and $t'_k$ in the above inequality, we have 
	$$\lim_{k\to \infty}d(c_{n_k}(t'_k),c_{n_k}(t_k))=0.$$
	\noindent By convergence-symmetry, it follows that 
		$$\lim_{k\to \infty}d(c_{n_k}(t_k),c_{n_k}(t'_k))=0$$
 as well, contradicting the assumption. Thus $(c_n)$ forms an equicontinuous family of paths. 

We now show that the set 
$$C=\{c_n(t): t\in [0,1], n\geq 1\}$$
is bounded. Assume for contradiction that $C$ is not bounded. Then there exist sequences $(t_k)$ and $(t'_k)$ in $[0,1]$ and a subsequence $(c_{n_k})$ such that 
$$\lim_{k\to \infty}d(c_{n_k}(t_k),c_{n_k}(t'_k))=\infty.$$
Assume further that $t'_{k_l}\geq t_{k_l}$ for  infinitely many $k_l$, where $k_{l+1}>k_l$. 
Then we have
$$d(c_{n_{k_l}}(t_{k_l}),c_{n_{k_l}}(t'_{k_l})\leq l(c_{n_{k_l}})\leq d(a,b)+1.$$
 Taking the limit of the above inequality, we get a contradiction to the assumption. It follows that we may assume that $t_k>t'_k$ for all but finitely many  $k$. In this case, exchanging the roles of $t_k$ and $t'_k$ in the above inequality, we see that the sequence 
$$(d(c_{n_k}(t'_k),c_{n_k}(t_k)))$$
\noindent is bounded. It follows that the sequence 
$$(d(c_{n_k}(t_k),c_{n_k}(t'_k)))$$
is bounded as well, contradicting our assumption. Thus $C$ is a bounded set.

Since $C$ is bounded, its closure is bounded as well. By Theorem \ref{thm:Hopf--Rinow-0}, we see that the closure of $C$ is compact. Theorem \ref{thm:arzela} implies that there is  a subsequence $(c_{n_k})$ that converges uniformly to a function $c:[0,1]\to X$.   Proposition  \ref{prop:uniform-convergence} implies that $c$ is continuous as well. Finally,
Corollary \ref{cor:lower-semicont-1} implies that 
$$l(c)\leq \liminf l(c_{n_k})= d(a,b).$$
But $l(c)\geq d(a,b)$, hence $l(c)=d(a,b)$. Without loss of generality, we may assume that $c$ is parametrised by arc-length. 
Now we have, for all $a\leq t\leq t'\leq b$
\begin{align*}
	l(c)&=d(c(a),c(b))\\
	&\leq d(c(a),c(t))+d(c(t),c(t'))+d(c(t'),c(b))\\
	&\leq l(c\lvert_{[a,t]})+l(c\lvert_{[t,t']})+l(c\lvert_{[t',b]})\\
	&= l(c).
	\end{align*}
\noindent 	Therefore we have $d(c(t),c(t'))=l(c\lvert_{[t,t']})$. Since $c$ is parametrised by arc-length, we see that $l(c\lvert_{[t,t']})=t'-t$. It follows that $d(c(t),c(t'))= t'-t$. This completes the proof.
\end{proof}
\end{theorem}

\begin{corollary}
	Let $M$ be a compact manifold and $F$ a Finsler structure on $M$. Consider the metric $d_F$ on $M$ induced by $F$. Let $x$ and $y$ be two points on $M$. Then there exists a geodesic on $M$ joining $x$ to $y$.
	\begin{proof}
We know that $(M,d_F)$	is a length metric space. In Section \ref{Finsler-convergence-symmetric} we showed that $(M,d_F)$ satisfies the other 
conditions of Theorem \ref{thm:Hopf--Rinow-1}. It follows that $(M,d_F)$ is a geodesic space.
	\end{proof}
\end{corollary}

\section{Some Thurston-type Finsler metrics and some questions}\label{s:Thurston}

A beautiful example of a Finsler non-compact manifold is Thurston's metric. In this last section, we shall recall this metric for the convenience of the reader and we shall then ask several questions concerning it topology, all related to the topics we addressed in the preceding sections.
\begin{example}[Thurston metric]
    	In his 1985 preprint {\it Minimal Stretch Maps between hyperbolic surfaces} \cite{thurston}, Thurston introduced a metric on Teichm\"{u}ller space, defined as follows:
	
	Let $M$ be a closed orientable surface  of genus $\geq 2$, and 
	$g$ and $h$ two marked hyperbolic surfaces on $M$.  For a homeomorphism $\phi: (S,g)\to (S,h)$, we set
	$$L(\phi)=\sup_{x\neq y}\{d_h(\phi(x),\phi(y))/d_g(x,y)\}.$$

	  Given the two marked hyperbolic structures $g$ and $h$ on $M$ representing elements of the Teichm\"{u}ller space of $M$, we set 
	\begin{equation}\label{eq:infimum}
	L(g,h)=\inf_{\phi\simeq \mathrm{Id}}\log L(\phi)
	\end{equation} 
	\noindent where the infimum is taken over all the  homeomorphisms $\phi$ of $S$ which are homotopic to the identity.
	Thurston proved that
\begin{enumerate}
 \item 
		 $L$ is an asymmetric metric on Teichm\"uller space; 
		\item
		$L$ is a Finsler metric; 
		\item
		there always exist a (non-unique) best Lipschitz homeomorphism between two hyperbolic structures on $M$ which realises the infimum in \ref{eq:infimum};
		\item
		$L(g,h)$ is also given by the quantity $\log \sup_{\gamma\in \mathcal{S}}\frac{l_h(\gamma)}{l_g(\gamma)}$  
		where $\mathcal{S}$ is the set of homotopy classes of non-trivial simple closed curves on $M$ and $l_g(\gamma)$ denotes the length of the geodesic in the class  $\gamma$ with respect to $g$;
		\item
		any two points of Teichmüller space of $M$ can be joined by a geodesic for the metric $L$.
	\end{enumerate}
\end{example}

After Thurston's paper appeared, this metric has been generalised to several settings, including Teichm\"uller spaces of flat tori \cite{2005c}, spaces of hyperbolic surfaces with boundary  (with a modified version of this metric, called the arc metric) \cite{PS2015, HP2021}, 
spaces of Euclidean triangles and spaces of triangulated Euclidean surfaces  \cite{Saglam1, OMP2, SOP1, HLLZ},   
spaces of quadrangulated surfaces equipped with singular flat metrics \cite{Saglam2}, 
spaces of semi-translation surfaces \cite{Wolenski, Shi}, and other settings. The interested reader may refer to the paper 
\cite{PS2015} on the Finsler structure of the Thurston metric, and to
the  survey \cite{Course}. Two other surveys on Thurston's metric appeared recently, see \cite{Pan-Su} and \cite{Xu}. A new survey will appear in \cite{Tokyo-2025}. See also the problem set \cite{S}, which dates from  2015, some of which are still open. Some spaces of surfaces equipped with metrics of  Thurston type are included in the examples below.

\begin{example}[Thurston-type metric in spaces of L-shaped polygons]
  %  \begin{figure}
  %  \includegraphics{Lshape.pdf}
  %  \end{figure}
 An L-shaped polygon is shown in the Figure \ref{rectangles}. More generally, an L-shaped polygon is made up of a union of Euclidean rectangles. The dots, which are the vertices of the rectangles, are regarded as marked points on the boundary of the L-shaped polygon, and we call their union the labels of the polygon.  
 An L-shaped polygon is an example of a Euclidean quadrangulated surface, a class of surfaces studied in \cite{Saglam2}. The space of  L-shaped polygons is parametrised by the four real numbers $a_1,a_2,a_3,a_4$ representing lengths and shown in Figure \ref{rectangles}. The area of this L-shaped polygon is equal to the quantity
    $$a_1a_4+a_3a_4+a_2a_3.$$

\begin{figure}[h]
\centering
  \begin{tikzpicture}[scale=2.5]

  % Düğümler
  \foreach \x/\y in {0/0, 2/0, 0/1, 2/1, 4/0, 4/1, 0/2, 2/2}
    \filldraw[blue] (\x,\y) circle (2pt);

  % Dikdörtgenler
  \draw[thick] (0,0) -- (2,0) -- (2,1) -- (0,1) -- cycle; % Sol alt dikdörtgen
  \draw[thick] (2,0) -- (4,0) -- (4,1) -- (2,1) -- cycle; % Sağ dikdörtgen
  \draw[thick] (0,1) -- (2,1) -- (2,2) -- (0,2) -- cycle; % Üst dikdörtgen

  % Kenar etiketleri
  % Sol alt dikdörtgen
  \node[left] at (0,0.5) {\(a_3\)};
  %\node[right] at (2,0.5) {\(a_1\)};
  \node[below] at (1,0) {\(a_4\)};
 % \node[above] at (1,1) {\(a_4\)};

  % Sağ dikdörtgen
  \node[right] at (4,0.5) {\(a_3\)};
  \node[left] at (2,0.5) {\(a_3\)};
  \node[below] at (3,0) {\(a_2\)};
  \node[above] at (3,1) {\(a_2\)};

  % Üst dikdörtgen
  \node[left] at (0,1.5) {\(a_1\)};
  \node[right] at (2,1.5) {\(a_1\)};
  \node[below] at (1,1) {\(a_4\)};
  \node[above] at (1,2) {\(a_4\)};

\end{tikzpicture}
\caption{L-shaped polygon}
\label{rectangles}
\end{figure}
    
 Assume that $P$ and $P'$ are two L-shaped polygons with labelled vertices and let $f:P \to P'$ be a label-preserving homeomorphism. We define  
 $$L(f)=\sup_{x\neq y \in P}\frac{d_{P'}(f(x),f(y))}{d_{P}(x,y)}.$$
and
$$L(P,P')= \inf_{f \equiv id}\log L(f).$$

In the first formula, $d_P$ is the metric on $P$ (respectively $P'$), defined such that the distance between two points is equal to the length of the shortest piecewise-linear path between them, the length of a path being measured in each rectangle in the usual way.
In the second formula, the infimum is taken over the set of all label-preserving homeomorphisms between the two L-shaped polygons.

Note that the value 
$L(P,P')$ depends only on the isometry  classes  $[P]$ and $[P']$,  of $P$ and $P'$. Thus, $L([P],[P'])$ is well defined. Let us denote the space of isometry classes of  unit area 
L-shaped polygons by $\mathcal{R}_1$.  It was shown in \cite{Saglam2} that $L$ is a metric on $\mathcal{R}_1$.   The following function was also defined in the same paper:
		$$K(P,P')=\log \max_i\{\frac{a_i'}{a_i}\}.$$
\end{example}
 In the next proposition, we summarise some results of \cite{Saglam2} restricted to the case of L-shaped polygons.
\begin{proposition}
	\begin{enumerate}
			\item 
			We have  $L=K$.
			\item
			 $L$ is an asymmetric metric  when it is restricted to the space $\mathcal{R}_1$ of unit area L-shaped polygons.
		
		\item
		A  sufficient condition for a path in $\mathcal{R}_1$ to be a geodesic is given in \cite{Saglam2}, where it is also proved  that any two points on $\mathcal{R}(Q)_1$ are joined by a geodesic.
			
			\item
			The restriction of the metric $L$ to $\mathcal{R}(Q)_1$ is Finsler,  and a formula is given for the associated Minkowski norm of the tangent space of each point in $\mathcal{R}_1$.	
		\end{enumerate}

\end{proposition}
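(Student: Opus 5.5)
The plan is to prove item (1) first, since the other items are then statements about the explicit function $K$. For the inequality $K\leq L$, I would use the fact that each parameter $a_i$ is the length of a boundary edge of $P$ joining two labelled vertices. That straight segment is a shortest path in $P$ between those two vertices, so $a_i=d_P(v,w)$ for the corresponding labels $v,w$, and similarly $a'_i=d_{P'}(v',w')$. A label-preserving homeomorphism $f$ sends $v,w$ to $v',w'$, so $L(f)\geq d_{P'}(v',w')/d_P(v,w)=a_i'/a_i$ for every $i$. Taking the infimum over $f$ gives $L\geq K$.

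For $L\leq K$ I would construct an explicit map. On each of the three rectangles (top $a_4\times a_1$, bottom-left $a_4\times a_3$, right $a_2\times a_3$) take the diagonal affine map onto the corresponding rectangle of $P'$, stretching horizontally and vertically by the relevant ratios $a_j'/a_j$. These maps agree on the shared edges, because the edge of length $a_4$ is scaled by $a_4'/a_4$ in both adjacent rectangles, and likewise for $a_3$; so they glue to a label-preserving homeomorphism. Each piece is Lipschitz with constant at most $\max_i a_i'/a_i$. Since $d_P$ is a length metric, any shortest path can be cut into finitely many pieces each lying in one rectangle, so the glued map has the same Lipschitz bound. Hence $L(P,P')\leq K(P,P')$, which proves (1).

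For (2) I would work with $K$. The triangle inequality follows from $\max_i a_i''/a_i\leq \max_i (a_i''/a_i')\cdot\max_i(a_i'/a_i)$. Positivity uses the unit-area constraint: if all $a_i'\leq a_i$ then, since the area $a_1a_4+a_3a_4+a_2a_3$ is strictly increasing in each variable, it drops unless $a'=a$. Thus $K\geq 0$, with equality only if $P=P'$. To get asymmetry, I would exhibit an explicit pair of unit-area polygons, chosen so that the largest ratio $a_i'/a_i$ and the largest ratio $a_i/a_i'$ differ.

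For (4) I would pass to logarithmic coordinates $u_i=\log a_i$. The area-one locus is then a smooth hypersurface, and $K$ is the max-norm-type function $\max_i(u_i'-u_i)$. Differentiating along a $C^1$ path gives the Finsler structure $F(u,\dot u)=\max_i \dot u_i$ on the tangent hyperplane to the area-one locus. This is a Minkowski norm there: it is positive on nonzero tangent vectors, because those satisfy a linear relation $\sum_i c_i\dot u_i=0$ with all $c_i>0$, so some $\dot u_i>0$. It then remains to check that $K$ equals the induced length metric. The inequality $K\leq$ length is immediate by integrating $F$. Equality, and also (3), both come from exhibiting a path whose $F$-length equals $K$. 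The sufficient condition I would prove is this: a path is geodesic if there is an index $i_0$ with $\dot u_{i_0}(t)\geq \dot u_j(t)$ for all $j$ and all $t$. Along such a path the length is $\int\dot u_{i_0}=u'_{i_0}-u_{i_0}$, which is at most $K$.

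The main obstacle is constructing such a path between two arbitrary points while staying on the area-one hypersurface. My first attempt would be the following. Let $i_0$ realise the maximum of $u'_j-u_j$. Move the coordinates along a path in which $u_{i_0}$ increases linearly, each other $u_j$ is monotone toward $u'_j$ and never faster than $u_{i_0}$, and one coordinate with a positive area coefficient is used as a slack variable to restore unit area. I would then verify that the monotonicity and rate constraints are compatible using the ordering of the ratios.
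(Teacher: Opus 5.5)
The paper gives no proof of this proposition; it only summarises \cite{Saglam2}, so your argument has to stand on its own. Parts (1) and (2) do. The bound $L\geq K$ via labelled boundary edges, the bound $L\leq K$ via the glued diagonal stretch maps, and positivity via the strict monotonicity of $\mathcal{A}(a)=a_1a_4+a_3a_4+a_2a_3$ are all correct. For asymmetry you only promise an example, but the sequences $P_n,P'_n$ in the next proposition supply one. Your norm $F=\max_i\dot u_i$, the inequality $K\leq l_F$, and your sufficient condition for geodesics are also correct.

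The gap is the step you flag yourself: you never produce a path of $F$-length $K(P,P')$. Part (3), and the inequality $\inf_c l_F(c)\leq K$ needed for (4), both depend on it. Your slack-variable scheme is not merely unverified; the slack coordinate is exactly the one that escapes control. With $c_j=a_j\,\partial\mathcal{A}/\partial a_j>0$, the constraint $\sum_j c_j\dot u_j=0$ gives $\dot u_k=-c_k^{-1}\sum_{j\neq k}c_j\dot u_j$. The time-dependent weights $c_j$ are not governed by the ordering of the ratios.

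For instance, take $a_j=1/\sqrt3$ for all $j$. Take $a'$ with $\log(a'_3/a_3)=\log(a'_4/a_4)=-5$ and $\log(a'_2/a_2)$ slightly below $\log(a'_1/a_1)=K$; unit area forces $K\approx 5.4$. If $u_1,u_3,u_4$ move log-linearly and $a_2$ is the slack, the initial slack rate is $20-K\approx 14.6>K$. So a natural instance of your scheme breaks the rate bound, and the choice of slack would need a real argument. The requirement that every coordinate be monotone toward its target is unnecessary in any case.

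The construction becomes easy if you replace the slack variable by homogeneity. Let $\kappa=K(P,P')>0$, let $i_0$ realise the maximum, and put $b=e^{-\kappa}a'$. Then $b_j\leq a_j$ for all $j$, $b_{i_0}=a_{i_0}$, and $\mathcal{A}(b)=e^{-2\kappa}$, since $\mathcal{A}$ is homogeneous of degree $2$.

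Along $\sigma(s)=(1-s)a+sb$ every coordinate is non-increasing. Hence $\mathcal{A}(\sigma(s))$ decreases strictly, with non-vanishing derivative, from $1$ to $e^{-2\kappa}$. Define $s(t)$ by $\mathcal{A}(\sigma(s(t)))=e^{-2\kappa t}$ and set $c(t)=e^{\kappa t}\sigma(s(t))$. This is a smooth path in $\mathcal{R}_1$ from $P$ to $P'$.

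Moreover, $\log c_j(t)-\kappa t=\log\sigma_j(s(t))$ is non-increasing for every $j$ and constant for $j=i_0$. Hence $K(c(t),c(t'))=\kappa\,(t'-t)$ for $t\leq t'$. So $c$ is a geodesic with $l_F(c)=\kappa$, which completes (3) and (4).
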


We now check the following.
\begin{proposition}
    The metric $L$ does not satisfy the convergence-symmetry property. 
    \end{proposition}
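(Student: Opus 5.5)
The plan is to exhibit two explicit sequences $(P_n)$ and $(Q_n)$ in $\mathcal{R}_1$ such that $L(P_n,Q_n)\to 0$ while $L(Q_n,P_n)$ stays bounded below by a positive constant. By part (1) of the preceding proposition we have $L=K$, so every distance can be computed from the formula $K(P,P')=\log\max_i\{a_i'/a_i\}$. This reduces the problem to elementary inequalities on the four side-length parameters.

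The idea is to make one side length, say $a_1$, tend to $0$ in both sequences, with $a_1(Q_n)=\tfrac12 a_1(P_n)$. The ratio $a_1'/a_1$ then equals $1/2$ in one direction and $2$ in the other. The unit-area constraint $a_1a_4+a_3a_4+a_2a_3=1$ forces some other side to compensate. Since $a_1a_4\to 0$, however, the compensation can be made with a ratio tending to $1$.

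Concretely, I would fix $a_3=a_4=\tfrac12$ and set
\[
P_n=\Bigl(\tfrac1n,\ \tfrac32-\tfrac1n,\ \tfrac12,\ \tfrac12\Bigr),\qquad
Q_n=\Bigl(\tfrac1{2n},\ \tfrac32-\tfrac1{2n},\ \tfrac12,\ \tfrac12\Bigr),
\]
listed as $(a_1,a_2,a_3,a_4)$. The first step is to check that these are genuine unit-area L-shaped polygons for $n\geq 1$. The area is $\tfrac12a_1+\tfrac14+\tfrac12a_2=1$ in both cases, and all entries are positive.

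The second step is to compute the distance from $P_n$ to $Q_n$. We get
\[
K(P_n,Q_n)=\log\max\Bigl\{\tfrac12,\ \tfrac{3/2-1/(2n)}{3/2-1/n},\ 1,\ 1\Bigr\}.
\]
The middle ratio tends to $1$, so this quantity tends to $0$.

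The third step is the reverse distance. We get
\[
K(Q_n,P_n)=\log\max\Bigl\{2,\ \tfrac{3/2-1/n}{3/2-1/(2n)},\ 1,\ 1\Bigr\}\geq\log 2
\]
for every $n$. Hence $L(P_n,Q_n)\to0$ but $L(Q_n,P_n)\not\to0$, which contradicts the convergence-symmetry property.

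The only step needing any care is the choice of compensating coordinate. One must make sure that no ratio in the forward direction stays bounded away from $1$. That is why the shrinking side $a_1$ is paired with a bounded side $a_4$, which makes the lost area $O(1/n)$, and why the area is absorbed by a side $a_2$ of size bounded away from $0$. Since everything is explicit, I expect no real obstacle beyond invoking $L=K$ from the preceding proposition.
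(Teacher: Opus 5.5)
Your proof is correct and follows the same route as the paper: you exhibit explicit unit-area sequences in $\mathcal{R}_1$ and compute both directions using $L=K$. The only difference is the choice of sequences: the paper's $P_n=(n+1,\frac1n,n-1,\frac1{2n^2})$ and $P'_n=(n+\sqrt n,\frac1n,n-\sqrt n,\frac1{2n^{3/2}})$ make one distance tend to $0$ and the other to $\infty$, whereas yours keep the reverse distance equal to $\log 2$, which suffices equally well.
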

    \begin{proof}
    To prove the proposition, we consider the two sequences of L-shaped polygons defined by
    \[ P_n := \left( n+1, \frac{1}{n}, n-1, \frac{1}{2n^2} \right), \quad P'_n := \left( n+\sqrt{n}, \frac{1}{n}, n-\sqrt{n}, \frac{1}{2n^{3/2}} \right). \]

   We first check that these L-shaped polygons have all unit area:
\begin{align*}
    (n+1) \cdot \frac{1}{2n^2} + (n-1) \cdot \frac{1}{2n^2} + \frac{1}{n} \cdot (n-1) &= 1, \\
    (n+\sqrt{n}) \cdot \frac{1}{2n^{3/2}} + (n-\sqrt{n}) \cdot \frac{1}{2n^{3/2}} + \frac{1}{n} \cdot (n-\sqrt{n}) &= 1.
\end{align*}
    
    We now examine the behaviour of $K(P_n, P'_n)$ and $K(P'_n, P_n)$ as $n \to \infty$. Computing the ratios of the corresponding parameters, we obtain
    \[
    K(P_n, P'_n) = \log \max \left\{ \frac{n+\sqrt{n}}{n+1}, 1, \frac{n-\sqrt{n}}{n-1}, \frac{\frac{1}{2n^{3/2}}}{\frac{1}{2n^2}} \right\}.
    \]
     Since $\frac{n+\sqrt{n}}{n+1} \to 1$, $\frac{n-\sqrt{n}}{n-1} \to 1$ and $\displaystyle \frac{\frac{1}{2n^{3/2}}}{\frac{1}{2n^2}} \to \infty $ as $n \to \infty$, we have
    \[ K(P_n, P'_n) \to \infty. \]
    
    On the other hand, computing $K(P'_n, P_n)$, we have
    \[
    K(P'_n, P_n) = \log \max \left\{ \frac{n+1}{n+\sqrt{n}}, 1, \frac{n-1}{n-\sqrt{n}}, \frac{\frac{1}{2n^2}}{\frac{1}{2n^{3/2}}} \right\}.
    \]
    The last term is simplified to $\frac{n^{3/2}}{n^2} = \frac{1}{\sqrt{n}}$, which converges to zero as $n \to \infty$, yielding
    \[ K(P'_n, P_n) \to 0. \]
    
    This proves that there exist sequences $(P_n), (P'_n)$ such that $K(P'_n, P_n) \to 0$ when $n\to\infty$ while $K(P_n, P'_n) \to \infty$  when $n\to\infty$, contradicting the convergence-symmetry property. 
\end{proof}

\begin{example}[Spaces of Euclidean triangles] \label{ex:triangles}
    The set of isometry classes of labelled  Euclidean triangles is parametrised by the following subset of $\R^3$:
$$\{(a_1,a_2,a_3)= a_1,a_2,a_3>0, \ a_2+a_3-a_1>0, a_1+a_3-a_2>0, a_1+a_2-a_3>0\}.$$
Here, a positive triple $(a_1, a_2, a_3)$ represents the lengths of the  edges of a triangle.

This set can be identified with $(\R^*_+)^3=\{(A_1,A_2,A_3): A_1, A_2, A_3 >0\}$ via the maps
$$A_1=\frac{a_2+a_3-a_1}{2}$$
$$A_2=\frac{a_1+a_3-a_2}{2}$$
$$A_3=\frac{a_1+a_2-a_3}{2}.$$
The area of a triangle $(a_1,a_2,a_3)$ in terms of $(A_1,A_2,A_3)$ is given by Heron's formula:
$$\mathrm{Area}(a_1,a_2,a_3)=\mathrm{Ar}(A_1,A_2,A_3)=\sqrt{(A_1+A_2+A_3)A_1A_2A_3}.$$

We define the function $$\eta: (\R^*_+)^3\times (\R^*_+)^3\to \R$$ by the formula
$$\eta((A_1,A_2,A_3),(A_1',A_2',A_3'))=\log \max \{A_1'/A_1 ,A'_2/A_2, A_3'/A_3\}.$$

Consider the  subspace $\frak{T}_1\subset (\R^*_+)^3$ consisting  of unit area triangles:
$$\frak{T}_1=\{(A,B,C): \mathrm{Ar}(A,B,C)=1\}.$$

\begin{proposition}
The function $\eta$ is an asymmetric metric on $\frak{T}_1$. 

\end{proposition}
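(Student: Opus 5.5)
We have to verify three things for $\eta$ on $\frak{T}_1$: non-negativity, separation of points, and the triangle inequality. We also have to exhibit one asymmetric pair.

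\textbf{Triangle inequality.} This is the formal part. For $A,A',A''\in\frak{T}_1$ and each $i$ we have $A''_i/A_i=(A''_i/A'_i)(A'_i/A_i)$. The right-hand side is at most $\max_j(A''_j/A'_j)\cdot\max_j(A'_j/A_j)$. Taking the maximum over $i$ and then logarithms gives $\eta(A,A'')\leq\eta(A,A')+\eta(A',A'')$. This holds on all of $(\R^*_+)^3$; the area constraint is not used.

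\textbf{Non-negativity and separation.} Here the unit-area condition is essential. The function $\mathrm{Ar}(A_1,A_2,A_3)=\sqrt{(A_1+A_2+A_3)A_1A_2A_3}$ is strictly increasing in each variable on $(\R^*_+)^3$. Suppose $\eta(A,A')<0$, so that $A'_i<A_i$ for all $i$. Then $\mathrm{Ar}(A')<\mathrm{Ar}(A)=1$, a contradiction, so $\eta\geq0$. Clearly $\eta(A,A)=0$. Now suppose $\eta(A,A')=0$, so $A'_i\leq A_i$ for all $i$. If some inequality were strict, monotonicity (strict in each variable) would again give $\mathrm{Ar}(A')<\mathrm{Ar}(A)$, which is impossible. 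Hence $A=A'$. This monotonicity step is the only point that needs care, and it is immediate since each factor under the square root is positive and increasing.

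\textbf{Asymmetry.} It suffices to give one explicit pair. Take $A=(1,1,c)$ with $c$ determined by $(2+c)c=1$, that is, $c=\sqrt2-1$. Take $A'=(t,t,s)$ with $t\neq1$ and $(2t+s)t^2s=1$.
\begin{itemize}
\item[] For $t=2$: $(4+s)4s=1$ gives $s=-2+\sqrt{4.25}\approx0.0616$. Then $\eta(A,A')=\log\max\{2,\,s/c\}=\log2$, while $\eta(A',A)=\log\max\{1/2,\,c/s\}=\log(c/s)\approx\log6.72$.
\end{itemize}
These two values differ, so $\eta$ is asymmetric. Alternatively, one can note that the computation of $\eta(A',A)$ visibly differs from that of $\eta(A,A')$ and avoid numerics by a scaling argument. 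The main obstacle is only the positivity/separation step, which monotonicity of $\mathrm{Ar}$ settles.
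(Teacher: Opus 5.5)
Your proof is correct and follows the paper's argument. The triangle inequality is formal, and non-negativity and separation both come from the strict monotonicity of Heron's function $\mathrm{Ar}$ in each coordinate. The only difference is the asymmetry witness: the paper compares $(1,1,1)$ with $(\sqrt{3}/2,\sqrt{3}/2,1-\sqrt{3}/2)$, which lie in $\mathfrak{T}_1$ only after rescaling to unit area, whereas your pair $(1,1,\sqrt{2}-1)$, $(2,2,\sqrt{4.25}-2)$ already has unit area and gives $\log 2\neq\log(c/s)$ directly.
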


\begin{proof}
The triangle inequality is obvious.We prove nonnegativity.  Assume that $(A,B,C)$ and $(A',B',C')$ are in $\frak{T}_1$ and that $ \eta((A,B,C),(A',B',C'))\leq0$. This means that $A'\leq A$, $B'\leq B$ and $C'\leq C $. Assume, without loss of generality, that $A'<A$. Then, from Heron's formula that we recalled, we have 
$$\mathrm{Ar}(A',B',C')<\mathrm{Ar}(A,B,C),$$
which is a contradiction. It follows that $A=A'$. Thus  $ \eta((A,B,C),(A',B',C'))\leq0$ implies $A=A', B=B'$ and $C=C'$. Asymmetry may be checked by computing the distances between the triples
$(1,1,1)$
 and
$(\sqrt{3}/2, \sqrt{3}/2, 1-\sqrt{3}/2).$
\end{proof}

The asymmetric metric $\eta$  on  $\frak{T}_1$
 was introduced and studied  in \cite{SOP1}. 
We shall show that the space 
$\frak{T}_1$ satisfies the convergence-symmetry property. 
We start with the following lemma:
\begin{lemma}
\label{simple-inequality}
Let $a_1, a'_1,\dots a_n, a'_n$ be positive real numbers. Then
$$\frac
{a'_1+\dots +a'_n}{a_1+\dots+a_n}\leq \max_i\{\frac{a'_i}{a_i}\}.$$
\end{lemma}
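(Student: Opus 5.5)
Let $M=\max_i\{a'_i/a_i\}$. Since each $a_i>0$, the definition of $M$ gives $a'_i/a_i\leq M$, and multiplying by $a_i$ yields $a'_i\leq M a_i$ for every $i=1,\dots,n$. The plan is to sum these $n$ inequalities to get
$$a'_1+\dots+a'_n\leq M(a_1+\dots+a_n).$$
Because $a_1+\dots+a_n>0$, dividing both sides by this sum gives the claimed inequality.

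This is the elementary mediant (Cauchy) inequality, and no earlier result of the paper is needed. The only point requiring care is positivity of the $a_i$. It is used twice: when multiplying $a'_i/a_i\leq M$ by $a_i$ without reversing the inequality, and when dividing by the nonzero sum $\sum a_i$. The positivity of the $a'_i$ is not actually needed.

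Alternatively, one can induct on $n$ using the two-term mediant property
$$\frac{a'_1+a'_2}{a_1+a_2}\leq\max\Big\{\frac{a'_1}{a_1},\frac{a'_2}{a_2}\Big\},$$
but the direct summation argument above is shorter and is the one I would write.
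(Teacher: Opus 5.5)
Your proof is correct and is essentially the paper's argument: the paper writes $a'_1+\dots+a'_n=a_1\frac{a'_1}{a_1}+\dots+a_n\frac{a'_n}{a_n}$, bounds each ratio by $\max_i\{a'_i/a_i\}$ to get $\max_i\{a'_i/a_i\}(a_1+\dots+a_n)$, and then divides by the positive sum. Your observation that positivity of the $a'_i$ is not needed is also correct.
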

\begin{proof}
Indeed, we have
$$a'_1+a'_2+\dots +a'_n=a_1\frac{a'_1}{a_1}+\dots+a_n \frac{a'_n}{a_n}\leq \max_i\{\frac{a'_i}{a_i}\}(a_1+\dots +a_n),$$
\noindent and the result follows.
\end{proof}

\begin{proposition}
The space 
$\frak{T}_1$ satisfies the convergence-symmetry property. 
\end{proposition}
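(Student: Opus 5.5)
Let $P_n=(A_n,B_n,C_n)$ and $Q_n=(A'_n,B'_n,C'_n)$ be sequences in $\frak{T}_1$ with $\eta(P_n,Q_n)\to 0$. We must show $\eta(Q_n,P_n)\to 0$, i.e.\ that $\max\{A_n/A'_n,B_n/B'_n,C_n/C'_n\}\to 1$, or equivalently $\min\{A'_n/A_n,B'_n/B_n,C'_n/C_n\}\to 1$. Set $\lambda_n=\max\{A'_n/A_n,B'_n/B_n,C'_n/C_n\}$. The hypothesis says $\lambda_n\to 1$; note $\lambda_n\ge 1$ by the nonnegativity argument already given. The proof rests on the unit-area constraint in Heron's formula: if all three ratios are at most $\lambda_n\approx 1$, the product constraint forces each individual ratio to also be close to $1$ from below.

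The key identity is
$$1=\mathrm{Ar}(Q_n)^2=(A'_n+B'_n+C'_n)\,A'_nB'_nC'_n \quad\text{and}\quad 1=(A_n+B_n+C_n)\,A_nB_nC_n.$$
Dividing the first by the second gives
$$1=\frac{A'_n+B'_n+C'_n}{A_n+B_n+C_n}\cdot\frac{A'_n}{A_n}\cdot\frac{B'_n}{B_n}\cdot\frac{C'_n}{C_n}.$$
By Lemma \ref{simple-inequality}, the first factor is at most $\lambda_n$. Suppose, say, $A'_n/A_n=\mu_n$ is the minimal ratio. Then the other two ratios are each at most $\lambda_n$, so
$$1\le \lambda_n\cdot \mu_n\cdot \lambda_n\cdot \lambda_n=\lambda_n^3\mu_n,$$
hence $\mu_n\ge \lambda_n^{-3}\to 1$. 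The same bound holds whichever coordinate realises the minimum.

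Therefore $\min_i(\text{ratio})\to 1$, so $\max\{A_n/A'_n,B_n/B'_n,C_n/C'_n\}=1/\mu_n\le\lambda_n^3\to 1$. This gives $\eta(Q_n,P_n)\le 3\,\eta(P_n,Q_n)\to 0$, which is in fact a uniform inequality. The converse implication follows by exchanging the roles of $P_n$ and $Q_n$.

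There is no serious obstacle here. The one point needing care is applying Lemma \ref{simple-inequality} to the sum factor, so that it is bounded by $\lambda_n$ rather than needing a separate estimate. A cleaner formulation is the global inequality $\eta(Q,P)\le 3\,\eta(P,Q)$, which yields convergence-symmetry immediately.
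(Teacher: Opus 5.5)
Your proof is correct and uses the same two ingredients as the paper: the unit-area identity $1=\frac{A_1'+A_2'+A_3'}{A_1+A_2+A_3}\cdot\frac{A_1'A_2'A_3'}{A_1A_2A_3}$, and Lemma \ref{simple-inequality} to bound the sum factor by the maximal ratio. The paper instead argues by contradiction along subsequences, assuming some ratio has limit strictly below $1$; you bound the minimal ratio directly and get the uniform estimate $\eta(Q,P)\le 3\,\eta(P,Q)$, which is slightly stronger (for instance, it also makes boundedness of $\eta(P_n,Q_n)$ and of $\eta(Q_n,P_n)$ equivalent).
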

\begin{proof}
We shall prove that for any two sequences
$A(n)=(A_1(n,A_2(n),A_3(n)))$ and $A'(n)=(A'_1(n),A'_2(n),A'_3(n))$  in  $\frak{T}_1$, we have
		$$\eta(A(n),A'(n))\to 0\ \text{ if and only if}\ \log A'_{i}(n)-\log A_{i}(n)\to 0\ \text{ for all} \ i=1,2, 3.$$
	
	If, for each $i$, $\log A'_i(n)-\log A_i(n)\to 0$, then $A_i'(n)/A_i(n)\to 1$ for each $i$. It follows that $\eta(A(n),A'(n))\to 0$.
For the other implication, assume that $\eta(A(n),A'(n))\to 0$. It follows that $\max\{A'_{i}(n)/A_i(n)\}\to 1$.
To prove the converse, we assume that there exists $i'$ such that $\lim_{n\to \infty}\frac{A'_{i'}(n)}{A_{i'}(n)}\neq 1$. We may suppose the following 

\begin{enumerate}
\item
$\lim_{n\to \infty}\frac{A'_1(n)}{A_1(n)}=1$.
\item
$\lim_{n\to\infty}\frac{A'_2(n)}{A_2(n)}$ exists and strictly less than $1$.
\item
$\lim_{n\to \infty}\frac{A'_3(n)}{A_3(n)}$ exists and less than or equal to $1$.
\end{enumerate}
 
 From Lemma \ref{simple-inequality}, we have
\begin{equation*}
\begin{split}
1=&\frac{[\mathrm{Ar}(A'(n))]^2}{[\mathrm{Ar}(A(n))]^2}=\frac{A'_1(n)A'_2(n)A'_3(n)(A'_1(n)+A'_2(n)+A'_3(n))}{A_1(n)A_2(n)A_3(n)(A_1(n)+A_2(n)+A_3(n))}\\
&\leq\frac{A'_1(n)A'_2(n)A'_3(n)}{A_1(n)A_2(n)A_3(n)}\max_i\{\frac{A'_i(n)}{A_i(n)}\}.
\end{split}
\end{equation*}
Making $n\to \infty$, we get a contradiction.
\end{proof}

\begin{theorem}\label{the:complete}
	$(\frak{T}_1,\eta)$ is complete.
\end{theorem}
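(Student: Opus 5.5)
Since $(\frak{T}_1,\eta)$ is convergence-symmetric (previous proposition), Lemma \ref{Cauchy} and Theorem \ref{thm-crucial} let us work with ordinary Cauchy sequences and with the max-symmetrisation. So let $(A(n))$ be a Cauchy sequence in $\frak{T}_1$, so that for every $\epsilon>0$ there is $N$ with $\eta(A(n),A(m))<\epsilon$ for all $n,m\geq N$. The plan is to pass to logarithmic coordinates $x_i(n)=\log A_i(n)$ and show that $(x(n))$ is Cauchy in $\R^3$ for the sup-norm. Then $x(n)\to x$ in $\R^3$, $A(n)\to A:=(e^{x_1},e^{x_2},e^{x_3})$ coordinatewise, the limit lies in $\frak{T}_1$ because $\mathrm{Ar}$ is continuous and $(\R^*_+)^3$ is open in the sense that all limit coordinates are positive, and finally $\eta(A(n),A)=\max_i (x_i-x_i(n))\to 0$.

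The key step is a uniform version of the previous proposition: I will show there is a function $\omega(\epsilon)\to 0$ as $\epsilon\to 0$ such that for all $A,A'\in\frak{T}_1$,
$$\eta(A,A')\leq \epsilon \implies \max_i\lvert\log A'_i-\log A_i\rvert\leq \omega(\epsilon).$$
Write $r_i=A'_i/A_i$. The hypothesis gives $r_i\leq e^{\epsilon}$ for all $i$. The area constraint, squared and combined with Lemma \ref{simple-inequality}, gives
$$1=\frac{\mathrm{Ar}(A')^2}{\mathrm{Ar}(A)^2}\leq r_1r_2r_3\max_i r_i\leq r_j\,e^{3\epsilon}$$
for each fixed $j$. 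Hence $r_j\geq e^{-3\epsilon}$, so we can take $\omega(\epsilon)=3\epsilon$. This is an explicit, scale-free estimate, and it is exactly what makes the argument work. It also re-proves convergence-symmetry quantitatively, giving $\eta(A',A)\leq 3\eta(A,A')$.

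With this inequality, the Cauchy property for $\eta$ translates directly into a sup-norm Cauchy property for $(x(n))$, and completeness of $\R^3$ gives the limit $x$. The only point needing care is that the limit is a genuine triangle, i.e.\ all $A_i>0$. This is automatic because $x$ is a finite vector, and the limit $x$ exists precisely because the logarithmic coordinates stay bounded along a Cauchy sequence.

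The main obstacle I expect is exactly this nondegeneracy: one might fear that a Cauchy sequence drifts towards degenerate triangles, with some $A_i\to 0$ or $\infty$ under the area constraint. The log-coordinate bound rules this out, since Cauchy sequences are bounded in the sup-metric on logarithms. If the constant-chasing fails, I would fall back on the subsequence argument of the previous proposition applied to pairs $(A(n),A(m))$, but I expect the explicit bound $e^{-3\epsilon}\leq r_i\leq e^{\epsilon}$ to suffice.
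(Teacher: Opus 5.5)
Your proof is correct, and it follows a somewhat different route from the paper's.

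\textbf{The paper's route.} The paper argues abstractly. It invokes Theorem \ref{thm-crucial}, which rests on the convergence-symmetry proposition proved just before by a subsequence/contradiction argument, to reduce to completeness of $\eta_{\max}$. It then observes that $\eta_{\max}(A,A')=\max_i\lvert\log A'_i-\log A_i\rvert$ is the restriction of a metric on $(\R^*_+)^3$ that coordinatewise exponentiation makes isometric to $\R^3$ with the sup metric. It concludes because $E^{-1}(\frak{T}_1)$ is closed in $\R^3$.

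\textbf{Your route.} You use the same logarithmic-coordinate picture and the same closedness (continuity of $\mathrm{Ar}$). But you replace the qualitative input by the explicit estimate $e^{-3\epsilon}\le A'_j/A_j\le e^{\epsilon}$, which you derive correctly from Lemma \ref{simple-inequality} and the area constraint. Equivalently, $\eta\le\eta_{\max}\le 3\eta$ on $\frak{T}_1$.

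\textbf{What each buys.} Your estimate makes the argument self-contained. In fact, your opening appeal to Lemma \ref{Cauchy} and Theorem \ref{thm-crucial} is unnecessary: the forward Cauchy condition for $\eta$ already makes the log-coordinates Cauchy in the sup norm. Your route also proves more. The inequality $\eta(A',A)\le 3\eta(A,A')$ shows that $\eta$ is uniformly equivalent to its reverse and to its symmetrisations. This is a quantitative form of convergence-symmetry, and it also gives the boundedness hypothesis appearing in Theorem \ref{thm:Hopf--Rinow-1}. The paper's route is shorter given the general machinery of Section \ref{section:completion}, but it yields only the qualitative completeness statement.
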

\begin{proof}
	By Theorem \ref{thm-crucial} it is enough to show that the metric $\eta_{max}$ is complete. This metric is the restriction of the metric defined on $(\R_+^*)^3$ given by
	$$d((A_1,A_2,A_3),(A'_1,A'_2,A'_3)\})=\max\{\lvert \log A'_i-\log A_i\rvert \}.$$

	Now consider the metric $d_1$ on $\R^3$ given by the following formula:
	$$d_1((B_1,B_2,B_3),(B'_1,B'_2,B'_3))=\max\lvert B'_i-B_i\rvert.$$
	It is not difficult to see that 
	$$E: \{B_{ijk}\}\to \{e^{B_{ijk}}\}$$
	\noindent is an isometry between $(\R^3,d_1)$ and $((\R_+^*)^3,d)$. The former metric space is complete, and $d_1$ is equivalent to the usual Euclidean metric. Now since $E^{-1}(\frak{T}_1)$ is a closed subspace of $\R^3$, the result follows.
\end{proof}

The metric $\eta$ is the restriction to the submanifold $\frak{T}_1$ of the {\it asymmetric Thompson distance} defined on the standard positive cone $(\R_+^*)^3$. The asymmetric Thompson distance on a cone in a finite-dimensional real vector spaces was introduced in \cite{SOP1}. Several properties of this distance have been studied, including its Finsler nature on suitable submanifolds and the description of its geodesics for the standard positive cone in $\R^n$.

\end{example}

\begin{example}
	\label{ex:quadrangle}
	Consider the space of unit-area quadrilaterals equipped with a triangulation.
	We assume that the triangulation is obtained by adding a diagonal  as in Figure \ref{fig:quadrilateral}.
	Let us denote this space by $\mathfrak{Q}_1$.
	
	\begin{figure}[htbp]
		\centering
		\begin{tikzpicture}[scale=1.2, line cap=round, line join=round]
			
			% Köşeler
			\coordinate (A) at (0,3);
			\coordinate (B) at (5,3.2);
			\coordinate (C) at (4,0);
			\coordinate (D) at (-1,1);
			
			% Dörtgenin kenarları
			\draw[very thick] (A) -- (B) -- (C) -- (D) -- cycle;
			
			% İçteki doğru parçası
			\draw[very thick] (A) -- (C);
			
			% Noktalar
			\fill (A) circle (2pt);
			\fill (B) circle (2pt);
			\fill (C) circle (2pt);
			
			% Kenar etiketleri
			\node[above] at ($(A)!0.5!(B)$) {$a_5$};
			\node[right] at ($(B)!0.5!(C)$) {$a_4$};
			\node[below] at ($(D)!0.5!(C)$) {$a_1$};
			\node[left]  at ($(D)!0.5!(A)$) {$a_2$};
			
			% İç çizgi etiketi
			\node at ($(A)!0.5!(C)+(0.2,0.1)$) {$a_3$};
			
		\end{tikzpicture}
		\caption{A quadrilateral with a diagonal.}
		\label{fig:quadrilateral}
	\end{figure}
	
	Given such a quadrilateral, we introduce the following parameters:
	\[
	A_{123}= \frac{a_2+a_3-a_1}{2}, \qquad
	A_{213}= \frac{a_1+a_3-a_2}{2}, \qquad
	A_{312}= \frac{a_1+a_2-a_3}{2},
	\]
	and
	\[
	A_{345}= \frac{a_4+a_5-a_3}{2}, \qquad
	A_{435}= \frac{a_3+a_5-a_4}{2}, \qquad
	A_{534}= \frac{a_3+a_4-a_5}{2}.
	\]
	
	Let $Q$ and $Q'$ be triangulated quadrilaterals with associated parameters $(A_{ijk})$ and $(A'_{ijk})$, respectively. Then the function
	\[
	\eta:\mathfrak{Q}_1\times \mathfrak{Q}_1\to \mathbb{R}
	\]
	given by
	\[
	\eta(Q,Q')
	=
	\log\left(
	\max_{ijk}\frac{A'_{ijk}}{A_{ijk}}
	\right)
	\]
	is a metric on $\mathfrak{Q}_1$, see \cite{SOP1}. Now consider the two sequences of quadrilaterals given in Figure \ref{fig:right-triangle-decompositions}. We denote by $Q_n$ and $Q'_n$ the corresponding normalised unit-area quadrilaterals.
	
	\begin{figure}[htbp]
		\centering
		\begin{tikzpicture}[
			scale=1.9,
			line cap=round,
			line join=round,
			every node/.style={font=\small}
			]
			
			% ---------------- Left figure ----------------
			\begin{scope}
				\coordinate (A) at (0,1.414);
				\coordinate (B) at (0,0);
				\coordinate (C) at (-0.707,0.707);
				\coordinate (D) at (0.48,1.226);
				
				% Outer shape and diagonal
				\draw[thick] (C) -- (A) -- (D) -- (B) -- cycle;
				\draw[thick] (A) -- (B);
				
				% Right angle marks
				\pic[draw, thick, angle radius=2.7mm] {right angle = A--C--B};
				\pic[draw, thick, angle radius=2.7mm] {right angle = A--D--B};
				
				% Points
				\fill (A) circle (1.1pt);
				\fill (B) circle (1.1pt);
				\fill (C) circle (1.1pt);
				\fill (D) circle (1.1pt);
				
				% Labels
				\node[left] at ($(C)!0.5!(A)+(-0.04,0.04)$) {$1$};
				\node[left] at ($(C)!0.5!(B)+(-0.04,-0.04)$) {$1$};
				\node[left] at ($(A)!0.5!(B)$) {$\sqrt{2}$};
				
				\node[above] at ($(A)!0.5!(D)+(0,0.05)$) {$\dfrac{1}{n}$};
				\node[right] at ($(D)!0.5!(B)+(0.10,0)$)
				{$\sqrt{2-\dfrac{1}{n^2}}$};
			\end{scope}
			
			% ---------------- Right figure ----------------
			\begin{scope}[xshift=4.2cm]
				\coordinate (A) at (0,1.414);
				\coordinate (B) at (0,0);
				\coordinate (C) at (-0.707,0.707);
				\coordinate (D) at (0.65,0.985);
				
				% Outer shape and diagonal
				\draw[thick] (C) -- (A) -- (D) -- (B) -- cycle;
				\draw[thick] (A) -- (B);
				
				% Right angle marks
				\pic[draw, thick, angle radius=2.7mm] {right angle = A--C--B};
				\pic[draw, thick, angle radius=2.7mm] {right angle = A--D--B};
				
				% Points
				\fill (A) circle (1.1pt);
				\fill (B) circle (1.1pt);
				\fill (C) circle (1.1pt);
				\fill (D) circle (1.1pt);
				
				% Labels
				\node[left] at ($(C)!0.5!(A)+(-0.04,0.04)$) {$1$};
				\node[left] at ($(C)!0.5!(B)+(-0.04,-0.04)$) {$1$};
				\node[right] at ($(A)!0.5!(B)+(0.04,0)$) {$\sqrt{2}$};
				
				\node[above] at ($(A)!0.5!(D)+(0,0.05)$) {$\dfrac{2}{n}$};
				\node[right] at ($(D)!0.5!(B)+(0.12,0)$)
				{$\sqrt{2-\dfrac{4}{n^2}}$};
			\end{scope}
			
		\end{tikzpicture}
		\caption{}
		\label{fig:right-triangle-decompositions}
	\end{figure}
	
	It is not difficult to see that
	\[
	\eta(Q_n,Q'_n)\to \log 2
	\qquad\text{and}\qquad
	\eta(Q'_n,Q_n)\to 0
	\]
	as $n\to\infty$. Therefore, $\eta$ is not convergence-symmetric.
	 
\end{example}

\begin{example}
	Consider the space of marked convex unit-area quadrilaterals (without a fixed triangulation), where the marking refers to the vertices. We denote this space by $\mathfrak{CQ}_1$. Each element of this space admits two distinct triangulations, as shown in Figure \ref{fig:two-quadrangles-different-diagonals}.
	
	\begin{figure}[htbp]
		\centering
		\begin{tikzpicture}[
			scale=1.35,
			line cap=round,
			line join=round,
			every node/.style={font=\small}
			]
			
			% ==================================================
			% Left quadrangle
			% ==================================================
			\begin{scope}
				% Same quadrangle vertices
				\coordinate (A) at (0,1.2);    % left
				\coordinate (B) at (1.4,2.8);  % top
				\coordinate (C) at (3.6,2.3);  % right
				\coordinate (D) at (1.8,0);    % bottom
				
				% Outer quadrangle
				\draw[thick] (A) -- (B) -- (C) -- (D) -- cycle;
				
				% Diagonal
				\draw[thick] (B) -- (D);
				
				% Vertex dots
				\fill (A) circle (1.4pt);
				\fill (B) circle (1.4pt);
				\fill (C) circle (1.4pt);
				\fill (D) circle (1.4pt);
				
				% Edge labels
				\node[left]  at ($(A)!0.55!(B)+(-0.15,0.05)$) {$a_2$};
				\node[left]  at ($(A)!0.45!(D)+(-0.10,-0.12)$) {$a_1$};
				\node[above] at ($(B)!0.5!(C)+(0,0.10)$) {$a_5$};
				\node[right] at ($(C)!0.5!(D)+(0.12,0)$) {$a_4$};
				
				% Diagonal label
				\node at ($(B)!0.55!(D)+(0.22,0)$) {$a_3$};
			\end{scope}
			
			% ==================================================
			% Right quadrangle (same shape, different diagonal)
			% ==================================================
			\begin{scope}[xshift=6.5cm]
				% Same quadrangle vertices
				\coordinate (A) at (0,1.2);    % left
				\coordinate (B) at (1.4,2.8);  % top
				\coordinate (C) at (3.6,2.3);  % right
				\coordinate (D) at (1.8,0);    % bottom
				
				% Outer quadrangle
				\draw[thick] (A) -- (B) -- (C) -- (D) -- cycle;
				
				% Different diagonal
				\draw[thick] (A) -- (C);
				
				% Vertex dots
				\fill (A) circle (1.4pt);
				\fill (B) circle (1.4pt);
				\fill (C) circle (1.4pt);
				\fill (D) circle (1.4pt);
				
				% Edge labels
				\node[left]  at ($(A)!0.55!(B)+(-0.15,0.05)$) {$a_2$};
				\node[left]  at ($(A)!0.45!(D)+(-0.10,-0.12)$) {$a_1$};
				\node[above] at ($(B)!0.5!(C)+(0,0.10)$) {$a_5$};
				\node[right] at ($(C)!0.5!(D)+(0.12,0)$) {$a_4$};
				
				% Diagonal label
				\node[below] at ($(A)!0.5!(C)+(0,-0.10)$) {$a_3'$};
			\end{scope}
			
		\end{tikzpicture}
		\caption{Two identical quadrilaterals with different diagonals.}
		\label{fig:two-quadrangles-different-diagonals}
	\end{figure}
	
	Let
	$
	A_{123}, A_{213}, A_{312}, A_{345}, A_{435},$ and $A_{534}
	$
	be the quantities defined in the preceding example. We also define $\eta$ as in the preceding example. Thus, $\eta$ is a metric on $\mathfrak{CQ}_1$.
	
	We now introduce another set of parameters. Given a quadrilateral $Q$ in $\mathfrak{CQ}_1$, let
	\[
	\bar{A}_{143}
	=
	\frac{a_1+a_4-a_3'}{2},
	\qquad
	\bar{A}_{431}
	=
	\frac{a_4+a_3'-a_1}{2},
	\qquad
	\bar{A}_{314}
	=
	\frac{a_3'+a_1-a_4}{2},
	\]
	and
	\[
	\bar{A}_{352}
	=
	\frac{a_3'+a_5-a_2}{2},
	\qquad
	\bar{A}_{523}
	=
	\frac{a_5+a_2-a_3'}{2},
	\qquad
	\bar{A}_{235}
	=
	\frac{a_2+a_3'-a_5}{2}.
	\]
	
	Let $Q$ and $Q'$ be quadrilaterals in $\mathfrak{CQ}_1$ with associated parameters $(\bar{A}_{ijk})$ and $(\bar{A}'_{ijk})$ respectively. Define
	\[
	\bar{\eta}:\mathfrak{CQ}_1\times \mathfrak{CQ}_1\to \mathbb{R}
	\]
	by
	\[
	\bar{\eta}(Q,Q')
	=
	\log\left(
	\max_{ijk}
	\frac{\bar{A}'_{ijk}}{\bar{A}_{ijk}}
	\right),
	\]
	where the maximum is taken over
	\[
	ijk\in \{143,431,314,352,523,235\}.
	\]
	Then $\bar{\eta}$ is also a metric on $\mathfrak{CQ}_1$.
	
	Consider the sequences of quadrilaterals in Example \ref{ex:quadrangle}. Then, using Ptolemy's formula for quadrilaterals inscribed in a circle, it is not difficult to see that
	\[
	\bar{\eta}(Q_n,Q_n')\to \log 2
	\]
	as $n\to\infty$, whereas
	\[
	\bar{\eta}(Q_n',Q_n)\to 0
	\]
	as $n\to\infty$.
	
	Now let
	\[
	\eta_{\max}(Q,Q')
	=
	\max\{\eta(Q,Q'),\bar{\eta}(Q,Q')\}.
	\]
	Then $\eta_{\max}$ is a metric on $\mathfrak{CQ}_1$. The above discussion implies that $\eta_{\max}$ is not convergence-symmetric either.
\end{example}

It is known that Thurston's metric, in its original form, satisfies Busemann's axiom, and that it is forward and backward complete, see \cite{2007a}.
The following questions concern this metric.

\begin{question}[Convergence-symmetry of Thurston's metric]
Is Thurston's metric convergence-symmetric? 
\end{question}

\begin{question}[The symmetrisations of the Thurston metric]
Study the max and sup symmetrisations of Thurston's metric: decide whether or not they are Finsler and give descriptions of (at least a class of) their geodesics.
\end{question}

The same questions can be asked for Thurston's arc metric, that is, an analogue of Thurston's metric for surfaces with boundary (see \cite{PS2015, HP2021}).

After Thurston's metric and its variants, it is natural to mention the earthquake metric, which originates in the same paper of Thurston in which he introduced the Thurston metric \cite{thurston}. We shall recall the definition and  address  some questions related to this metric.

\begin{example}[The earthquake metric] Let  $S=S_{g,n}$ be a surface of finite type of genus $g\geq 0$ with $n\geq 0$ punctures and of negative Euler characteristic, and let $\mathcal{T}(S)$ be its Teichm\"uller space. A result of Kerckhoff from \cite[Proposition 2.6]{Kerckhoff-analytic} says that for every point $x$ in $\mathcal{T}(S)$, every vector $v$  in the tangent space $T_x(\mathcal{T}(S))$ of $\mathcal{T}(S)$ at $x$ can be uniquely expressed as the tangent vector to a right earthquake path. Such an earthquake path is associated with a measured lamination $\lambda$ on $S$. We write this tangent vector as ${\bf e}_{\lambda}(x)$ to show the dependence on $x$ and on $\lambda$.
In the paper quoted, Thurston showed that for any point $x\in \mathcal{T}(S)$,  the set of tangent vectors $v$ of the form ${\bf e}_{\lambda}(x)$ with $\ell_{\lambda}(x)=1$, where $\ell_{\lambda}(x)$ denotes the length of $\lambda$ measured in the hyperbolic structure $x$, is the boundary of a convex ball in $T_x(\mathcal{T}(S))$ containing the origin in its interior. The set of such convex balls, for $x$ varying in $\mathcal{T}(S)$, defines a Finsler structure on $\mathcal{T}(S)$. The associated metric is called the \emph{earthquake metric}.
\end{example}

\begin{question}[Convergence symmetry of the earthquake metric]
In the paper \cite{earthquake-metric}, the authors studied extensively the earthquake metric. They introduced a new notion of completion for asymmetric metrics, which they called FD-completion (Finite-Distance completion). They proved that the earthquake metric is FD-complete, that its FD-completion coincides with the completion of various symmetrisations of this metric, and that these completions coincide with the familiar Weil--Petersson completion of Teichm\"uller space. 
The question of whether or not the earthquake metric is convergence-symmetric is open.
\end{question}

We end this section with some questions concerning other metrics motivated by the geometry of surfaces. We start with a non-symmetric version of the well-known Hausdorff distance on the space of subsets of a symmetric metric space. 
\begin{definition}[Left Hausdorff distance] \label{def:left-Hausdorff}
Let $(X,d)$ is a symmetric metric space and let $\mathcal{C}(X)$ be the set of compact subsets of $X$.

The left Hausdorff distance on $\mathcal{C}(X)$ is defined, for $A$ and $B$ in  $\mathcal{C}(X)$, by the formula
 $$ d_{\vec{H}}(A,B)= \inf_{\epsilon >0}\{ A \subset N_{\epsilon}(B)\}.$$
 \end{definition}

 In the paper \cite{Ohshika2}, the authors studied the left Hausdorff distance on the space $\mathcal{GL}(S)$  of geodesic laminations on a 
closed hyperbolic surface $S$. A  
 bijection $f: \mathcal{GL}(S)\to \mathcal{GL}(S)$, is said to \emph{preserves left Hausdorff convergence} if for any sequence 
$(\lambda_i)$ in $\mathcal{GL}(S)$ and for any $\mu$  in $\mathcal{GL}(S)$ we have
$$d_{\vec{H}}(\lambda_i,\mu) \to 0\iff d_{\vec{H}}(f(\lambda_i),f(\mu)) \to 0.$$
The following is proved in \cite[Theorem 1.3]{Ohshika2}:
\begin{theorem} \label{th:rigidity-left-Hausdorff}
The natural homomorphism from
the extended mapping class group of $S$ into the group of bijections of the
space of geodesic laminations which preserve left Hausdorff convergence
 is an isomorphism.
\end{theorem}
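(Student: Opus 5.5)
The statement is \cite[Theorem 1.3]{Ohshika2}. I would prove it by reducing it to Ivanov's theorem on automorphisms of the curve complex, in the form extended to all finite-type surfaces by Korkmaz and Luo. The bridge is a single observation about Definition \ref{def:left-Hausdorff}. For compact sets, $d_{\vec{H}}(A,B)=0$ if and only if $A\subset B$. Consequently the constant sequence $\lambda_i=\lambda$ left-Hausdorff converges to $\mu$ if and only if $\lambda\subset\mu$. So any bijection $f$ of $\mathcal{GL}(S)$ preserving left Hausdorff convergence, in the stated two-sided sense, satisfies $\lambda\subset\mu\iff f(\lambda)\subset f(\mu)$. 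Hence $f$ is an automorphism of the partially ordered set $(\mathcal{GL}(S),\subset)$. The whole argument then works with this order structure together with the convergence itself.

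\textbf{Injectivity.} An element of the extended mapping class group acts on $\mathcal{GL}(S)$ by homeomorphisms, so it clearly preserves left Hausdorff convergence. If it acts trivially, it fixes every simple closed geodesic, and hence is trivial by the standard fact (the Alexander method). In genus $2$ the hyperelliptic involution fixes every simple closed curve, so this case must be treated separately. I would either quotient by that involution or invoke the convention used in the source; I expect the source handles it within the statement.

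\textbf{Surjectivity, step one: recover the curve complex.} I need to characterise simple closed geodesics inside the poset, and the convergence, intrinsically.
\begin{itemize}
\item Minimal elements of the order are exactly the minimal laminations.
\item Among those, the closed geodesics should be distinguished by a convergence property. A non-closed minimal lamination $\nu$ is the left-Hausdorff limit of a sequence of pairwise distinct simple closed curves $c_i\neq\nu$, obtained by closing up long leaf segments. A simple closed curve $c$ should admit no such sequence of distinct minimal laminations converging to it, since anything in a thin neighbourhood of $c$ is $c$ itself.
\item Two distinct curves are disjoint if and only if they have a common upper bound, namely their union.
\end{itemize}
Hence $f$ induces an automorphism of the curve complex. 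By Ivanov--Korkmaz--Luo it is induced by some $h$ in the extended mapping class group.

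\textbf{Surjectivity, step two: show $g=h^{-1}\circ f$ is the identity.} The map $g$ preserves left Hausdorff convergence and fixes every simple closed geodesic.
\begin{itemize}
\item Each lamination is a finite union of minimal components and isolated leaves. By order preservation it suffices to show that $g$ fixes each minimal component and each finite lamination.
\item A lamination made of finitely many closed leaves is the least upper bound of its curves, so $g$ fixes it.
\item For a minimal non-closed $\nu$, I would characterise $\nu$ by convergence data. It is the unique minimal lamination that is a limit of some sequence of curves $c_i$ with $d_{\vec{H}}(c_i,\nu)\to0$; convergence to $\nu$ forces the curves into shrinking neighbourhoods of $\nu$. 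Since $g$ fixes every $c_i$, $g(\nu)$ is a minimal lamination that is also such a limit, so $g(\nu)=\nu$.
\item Laminations containing non-compact isolated leaves spiralling onto components are then pinned down by their sets of sublaminations together with such approximation sequences.
\end{itemize}

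\textbf{Main obstacle.} The hard part is the last step, together with the intrinsic detection of closed curves. Left Hausdorff convergence is very weak: a sequence converges to $\mu$ as soon as it is eventually near any sublamination-containing set. Uniqueness of limits therefore fails, and one must use both directions of the equivalence in the definition to rule out pathologies. My first attempt would be to prove that for a minimal $\nu$, the set of sequences of curves left-converging to $\nu$ determines $\nu$ among minimal laminations. The argument is that two distinct minimal laminations either intersect transversally or are disjoint, and in both cases their neighbourhoods are eventually separated by curves.
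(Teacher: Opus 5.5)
The paper gives no proof of this statement; it only quotes it from \cite[Theorem 1.3]{Ohshika2}. Your proposal therefore has to stand on its own, and it has a genuine gap at the crucial step.

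The first half is fine:
\begin{itemize}
\item inclusion is recovered from constant sequences;
\item simple closed geodesics are the minimal laminations to which only eventually constant sequences of minimal laminations left-converge;
\item disjointness is detected by common upper bounds, so Ivanov--Korkmaz--Luo applies;
\item $g=h^{-1}\circ f$ fixes every minimal lamination.
\end{itemize}
The gap is the last bullet of step two. It carries the whole difficulty and is only asserted. The reduction before it (``by order preservation it suffices to treat minimal components and finite laminations'') also does not follow from order preservation.

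The data you propose to use there do not determine a lamination with isolated spiralling leaves. These data are sublaminations, disjoint curves, and sequences of curves left-converging to $\lambda$. Genus $2$, which the statement includes, gives a concrete counterexample.
\begin{itemize}
\item The hyperelliptic involution $\iota$ is an isometry for every hyperbolic metric, so it preserves $d_{\vec{H}}$.
\item It fixes every simple closed geodesic, hence every multicurve and every Hausdorff limit of multicurves, in particular every minimal lamination.
\item For a non-separating geodesic $c$, $\iota$ acts by $-1$ on $H_1(S)$, so it reverses $c$. Being orientation-preserving, it therefore swaps the two sides of $c$.
\item If $\ell$ is a leaf whose two ends spiral onto $c$ from the same side, then $\iota(c\cup\ell)=c\cup\iota(\ell)\neq c\cup\ell$.
\end{itemize}
So in genus $2$ the claim ``$g$ preserves left Hausdorff convergence and fixes all curves, hence $g=\mathrm{id}$'' is false. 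The curve complex determines $h$ only up to $\iota$, and what must be shown is $g\in\{\mathrm{id},\iota\}$. Your suggestion to quotient by $\iota$ in the injectivity step is also wrong. $\iota$ acts non-trivially on $\mathcal{GL}(S)$, and that is exactly why the homomorphism is injective in genus $2$ as well.

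In genus $\geq 3$ the claim is true, since it follows from the theorem, but it needs an actual argument. One natural route has two steps.
\begin{itemize}
\item First show that $g$ fixes every lamination $\Lambda$ that is a Hausdorff limit of multicurves $m_i$. Since $m_i=g^{\pm1}(m_i)$ left-converges to $g^{\pm1}(\Lambda)$, you get $\Lambda\subseteq g(\Lambda)$ and $\Lambda\subseteq g^{-1}(\Lambda)$.
\item Then pin down each lamination with isolated leaves as the largest lamination contained in suitable chain-recurrent laminations of this kind. For instance, use Hausdorff limits of $T_c^n(b)$ for curves $b$ crossing $c$.
\end{itemize}
The second step requires constructing, for a leaf spiralling onto $c$ from one side, chain-recurrent laminations that contain it but whose leaves on the other side of $c$ differ. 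This is a genuine topological input, and it is exactly what fails for non-separating $c$ in genus $2$. A similar treatment is needed for isolated leaves asymptotic to non-closed minimal components. As written, surjectivity is not established.
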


Note that the left Hausdorff distance on the space $\mathcal{GL}(S)$  of geodesic laminations  is a weak metric which is not convergence-symmetric: Take two geodesic laminations $\lambda$ and $\mu$ such that $\lambda$ is a strict subset of $\lambda$. Then the two sequences $(x_n)$ and $(y_n)$ defined by  $x_n=\lambda$ and $y_n=\mu$ for all $n\geq 0$ satisfy 
 $d_{\vec{H}}(\lambda_n,\mu_n)=0$
 $d_{\vec{H}}(\mu_n,\lambda_n)>0$    for all $n$.

 The left Hausdorff distance is not proper to the setting of laminations; one may define it on various spaces of compact subsets of a given metric space. For instance, it can be defined on the set $\mathcal{C}$ of nonempty compact subsets of the Euclidean plane, of the hyperbolic plane, of a hyperbolic surface, etc., or on a special subset of such a set $\mathcal{C}$. (In the above example, it is defined on a subset of the set of compact subsets of a compact hyperbolic surface.) We propose the following
 \begin{question}
 Study the left Hausdorff distance in a simple case, e.g. on the set of nonempty compact subsets of the Euclidean plane, of the sphere $S^2$ or of the hyperbolic plane $\mathbb{H}^2$, and characterise its isometry group. We conjecture that one obtains in this way good examples of Finsler spaces, that is, one may characterise their geodesics, isometries, etc.
 \end{question}
 
%  \begin{question}
%  
% \end{question}

\end{document}